\documentclass[10pt]{amsart}

\usepackage{amssymb}
\usepackage{amsbsy}
\usepackage{amscd}
\usepackage[mathscr]{eucal}
\usepackage{verbatim}
\usepackage{version}
\usepackage{color}
\usepackage[matrix,arrow]{xy}
\usepackage{graphicx}

\def\cal{\mathcal}
\def\Bbb{\mathbb}

\newenvironment{NB}{
\color{red}{\bf NB}. \footnotesize 
}{}
\excludeversion{NB}
\newenvironment{NB2}{
\color{blue}{\bf NB}. \footnotesize
}{}

\newcommand{\simto}{\xrightarrow{\ \sim\ }}

\newcommand{\GL}  {\operatorname{GL}}

\newcommand{\Pic}{\operatorname{Pic}}

\newcommand{\ch}{\operatorname{ch}}

\newcommand{\Coh}{\operatorname{Coh}}

\newcommand{\Ext}{\operatorname{Ext}}
\newcommand{\Hom}{\operatorname{Hom}}

\newcommand{\rk}{\operatorname{rk}}

\newcommand{\NS}{\operatorname{NS}}
\newcommand{\coker}{\operatorname{coker}}
\newcommand{\td}{\operatorname{td}}

\newcommand{\Amp}{\operatorname{Amp}}

\newcommand{\alg}{\operatorname{alg}}

\newcommand{\Stab}{\operatorname{Stab}}

\def\Stabd{\mathop{\Stab^{\dagger}}\nolimits}

\font\b=cmr10 scaled \magstep5
\def\bigzerou{\smash{\lower1.7ex\hbox{\b 0}}}

\numberwithin{equation}{section}
\theoremstyle{plain}
 \newtheorem{thm}{Theorem}[section]
 \newtheorem{lem}[thm]{Lemma}
 \newtheorem{prop}[thm]{Proposition}
 \newtheorem{cor}[thm]{Corollary}
 
 \newtheorem{prob}[thm]{Problem}
\theoremstyle{definition}
 \newtheorem{defn}[thm]{Definition}
 
\theoremstyle{remark}
 \newtheorem{rem}[thm]{Remark}
 \newtheorem{ex}[thm]{Example}
 
\begin{document}

\title{Fourier-Mukai transforms by a family of stable sheaves and preservation of stability}
\author{K\={o}ta Yoshioka}
\address{Department of Mathematics, Faculty of Science,
Kobe University,
Kobe, 657, Japan
}
\email{yoshioka@math.kobe-u.ac.jp}

\thanks{
The author is supported by the Grant-in-aid for 
Scientific Research (No. 21H04429, 22H00094, 23K03053, 26K06742), JSPS}
\keywords{K3 surfaces, Bridgeland stability, Fourier-Mukai transforms}
%\subjclass[2010]{Primary 14D20}

\begin{abstract}
For a Fourier-Mukai transform whose kernel is a family of Gieseker stable sheaves, 
we shall give a condition for the preservation of Gieseker stability of sheaves. 
\end{abstract}

\maketitle
%\tableofcontents

\renewcommand{\thefootnote}{\fnsymbol{footnote}}
\footnote[0]{2010 \textit{Mathematics Subject Classification}. 
Primary 14D20.}

\section{Introduction}

Let $X$ be a K3 surface and $H$ an ample divisor on $X$.
Fourier-Mukai transforms are equivalences of the derived categories of 
coherent sheaves, and there are many Fourier-Mukai transforms on K3 surfaces.
In particular a 2-dimensional fine moduli space $X'$ of stable sheaves on $X$ defines a Fourier-Mukai 
transform. 
For applications of Fourier-Mukai transforms,
it is important to know the conditions to preserve the stability of coherent sheaves, and 
many results are obtained by many people.
Among them,  Bayer and Macri's work \cite{BM} is notable:
For a Fourier-Mukai transform $\Phi$,
they proved that there is an (contravariant) equivalence $\Psi$ such that
%$\Psi$ acts trivially on the Mukai lattice and
$\Phi \circ \Psi$ preserves Gieseker semistability for a general member of the 
moduli spaces. 

We assume that $\Pic(X)={\Bbb Z}H$ and set $(H^2)=2n$.
In \cite{CNY} and \cite{aCM:K3},
we studied Fourier-Mukai transform $\Phi$ associated to the moduli space of stable sheaves, and gave 
a condition under which $\Phi$ itself preserves Gieseker semistability
for a general member of the moduli space.
To be more precise, let $v_0=r_0 e^{\frac{q}{p}H}$ $(r_0>0)$ be 
a primitive isotropic Mukai vector and
$X'=M_H(v_0)$ the moduli space of stable sheaves with the Mukai vector $v_0$.
Assume that there is a universal family
${\cal E} \in \Coh(X \times X')$, and
let $\Phi_{X \to X'}^{{\cal E}^{\vee}}:{\bf D}(X) \to {\bf D}(X')$ be the Fourier-Mukai transform
whose kernel is the dual ${\cal E}^{\vee}$ of ${\cal E}$.
Let ${\cal M}_H(v)$ be the moduli stack of semistable sheaves $E$ with
the Mukai vector $v(E)=v$. 
If $pd-qr>0$,
$\langle v,e^{\frac{q}{p}H} \rangle >0$ and $n \geq p^2$, we
showed that $\Phi_{X \to X'}^{{\cal E}^{\vee}[1]}(E)$ is stable for a general 
$E \in {\cal M}_H(v)$,
where $v=(r,dH,a)$ and $\langle \;\;,\;\; \rangle$ is the
bilinear form on $H^*(X,{\Bbb Q})$. 
Although our condition $n \geq p^2$ is a very strong constraints on $X$ to apply, but 
it is a quite simple condition (compare the condition in Remark \ref{rem:r_0=1} (2)). 
In this paper, we shall treat the case where $\langle v,e^{\frac{q}{p}H} \rangle \leq 0$
and get a similar simple condition.

\begin{NB}
For a primitive isptropic Mukai vector $v_0=r_0 e^{\frac{q}{p}H}$ ($r_0>0$),
let $X'=M_H(v_0)$ be a fine moduli space  
and ${\cal E}$ a universal family. 
Let $\Phi_{X \to X'}^{{\cal E}}:{\bf D}(X) \to {\bf D}(X')$ be the Fourier-Mukai transform
whose kernel is ${\cal E}$.

Assume that $v_0$ is the Mukai vector of an ideal sheaf $I_x$ of a point $x \in X$.
Then we studied in \cite{CNY} that 
$\Phi_{X \to X'}^{\cal E}$ for the study of sheaf cohomology groups of a
general stable sheaves.
In particular we proved that $\Phi_{X \to X'}^{\cal E}(E)^{\vee}$ 
is Gieseker semistable for a general member $E$ of ${\cal M}_H(r,dH,a)$ if $d>0$ and $n \geq 2r$.
  
In this paper, we shall treat an arbitrary $r_0$.
\end{NB}

\begin{thm}[{Theorem \ref{thm:n}}]\label{thm:n0}
We set $X'=M_H(r_0 e^{-\frac{q}{p}H})$ and assume that there is a universal family
${\cal E}$.
Let $v=(r,dH,a)$ be a Mukai vector such that
\begin{equation*}
r \geq 0, \; pd-rq>0,-\langle e^{\frac{q}{p}H}, v \rangle \geq 0,\;
\langle v^2 \rangle \geq -2.
\end{equation*}
Assume that $n \geq (2r+p)p$.
Then
%\begin{enumerate}
%\item[(1)] 
$D_{X'} \circ \Phi_{X \to X'}^{{\cal E}}(E) \in {\cal M}_{H'}(v')$
for a general $E \in {\cal M}_H(v) \setminus Z$, where 
$D_{X'}:{\bf D}(X') \to {\bf D}(X')$ is the taking dual functor (see \eqref{eq:dualize}), 
$v'=\Phi_{X \to X'}^{{\cal E}}(v)$ and $Z$ is a closed substack of ${\cal M}_H(v)$. 
%\item[(2)]
%Assume that $a \geq 2$. 
%Then for a general $E \in M_H(v)$,
%we have the following claims.
%\begin{enumerate}
%\item
%$H^0(X,E) \otimes {\cal O}_X \to E$ is surjective,
%\item
%$H^1(X,E)=H^2(X,E)=0$
%for a general $E \in M_H(v)$
%\end{enumerate}
%\end{enumerate}
Moreover
$\dim Z \leq \dim {\cal M}_H(v)-2$ if $r>0$. 
\end{thm}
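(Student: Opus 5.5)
We follow the strategy of \cite{CNY} and \cite{aCM:K3}, with the roles of $\Phi$ and the dual adjusted to the case $\langle v,e^{\frac{q}{p}H}\rangle \leq 0$. The idea is to transport stability through Bridgeland stability conditions. Let $\sigma_{(\beta,\omega)}$ be a stability condition on ${\bf D}(X)$ with $\beta=\frac{q}{p}H$ and $\omega=tH$. We use three steps.

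\textbf{Step 1.} For $t\gg 0$, a general $E\in{\cal M}_H(v)$ is $\sigma_{(\beta,tH)}$-semistable, by the large volume limit. Since $pd-rq>0$, $E$ lies in the heart.

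\textbf{Step 2.} As $t$ decreases to the boundary, $v_0=r_0e^{-\frac{q}{p}H}$, or rather its shift/dual twin, becomes the Mukai vector of objects whose phase coincides with that of $E$. This is the wall where the sheaves in $X'$ become "points". Below that wall, $\Phi^{\cal E}_{X\to X'}$ composed with $D_{X'}$ sends $\sigma_{(\beta,tH)}$ to a stability condition on ${\bf D}(X')$ in the large volume (Gieseker) chamber, up to the $\widetilde{\GL}_2^+$ action. This is the standard statement that the kernel's fibres are $\sigma$-stable skyscraper-like objects.

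\textbf{Step 3.} Hence it suffices to show that a general $E$ stays $\sigma_{(\beta,tH)}$-semistable for all $t$ from $\infty$ down to the critical value $t_0$ where the $v_0$-objects become of phase $\phi(E)$. That is, no wall for $v$ lies in the range $t_0<t<\infty$ except possibly walls that destabilize only a substack $Z$ of large codimension.

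Walls for $v$ in the $(\beta,t)$-plane with fixed $\beta=\frac{q}{p}H$ come from classes $u=(r_1,d_1H,a_1)$ with $\langle u^2\rangle\geq -2$ and $\langle (v-u)^2\rangle\geq -2$, satisfying the phase equation. Writing $u$ in terms of $e^{\beta}$ coordinates, the denominators are controlled by $p$. The numerical wall equation then gives an inequality of the form $t^2 n \cdot(\text{positive integer}/p^2)\leq(\text{quantity bounded by } r,p)$. Using $-\langle e^{\beta},v\rangle\geq 0$ and $\langle v^2\rangle \geq -2$, one shows that any such wall would require $n<(2r+p)p$. Thus under $n\geq (2r+p)p$ no wall meets the region above $t_0$, except the totally semistable or codimension-type walls handled by dimension counts.

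For a wall that does occur, the destabilized locus consists of extensions $0\to E_1\to E\to E_2\to 0$. Its dimension is estimated by $\dim{\cal M}(u)+\dim{\cal M}(v-u)+\dim\Ext^1(E_2,E_1)$, which is $\dim{\cal M}_H(v)-\langle u,v-u\rangle$ up to a constant. This is at most $\dim{\cal M}_H(v)-2$ when $r>0$, since then $\langle u,v-u\rangle\geq 2$ for the remaining possible $u$. Define $Z$ as the union of these loci, together with the non-$\mu$-stable locus, which is already codimension $\geq 2$ by the standard estimates on K3 surfaces.

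Finally, since $\sigma$-semistable objects of the transformed vector in the Gieseker chamber are exactly Gieseker semistable sheaves, $D_{X'}\circ\Phi(E)\in{\cal M}_{H'}(v')$.

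\textbf{Main obstacle.} The hard part is the numerical wall analysis in Step 3: the precise inequality showing all potential walls between $t_0$ and $\infty$ are excluded by $n\geq(2r+p)p$, and the careful treatment of the boundary wall at $t_0$ itself. I would first try reducing to the integer lattice after twisting by $e^{-\beta}$, writing $u=e^{\beta}(x,yH/p,z/p^2)$. I would then bound $y$ using $0<y<p\,(\text{slope of }v)$ and the Bogomolov-type inequality for $u$.
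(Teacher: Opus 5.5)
\textbf{There is a genuine gap, in both the geometric set-up and the numerical core.}

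\textbf{The wall picture.} Steps 2--3 are wrong on the line $s=q/p$. There $Z_{(\beta,tH)}(r_0e^{\beta})=r_0nt^2$ is real, so objects of class $\pm r_0e^{\beta}$ (the fibres ${\cal E}^{\vee}_{|X\times\{x'\}}$ and their shifts) never have the phase of $E$. That phase lies in $(0,1)$, because $d^\beta>0$. So there is no critical value $t_0>0$.

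The locus where these phases agree is the semicircle $C$ through $(q/p,0)$, tangent there to the vertical line. The chamber ${\cal C}={\cal C}_0^\beta$, which Prop.~\ref{prop:isom} identifies with the Gieseker chamber of $v'$, is only reached near $(q/p,0)$ from the side $s>q/p$.

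Worse, if $p\mid q^2n+1$, the spherical class $(p,qH,\frac{q^2n+1}{p})=e^\beta(p,0,\frac1p)$ has $Z=pnt^2-\frac1p\le 0$ for $t\le 1/(p\sqrt n)$. Your vertical path therefore leaves ${\Bbb H}^0$ before getting there. This is why the paper works in $U_+^\beta$ and uses Prop.~\ref{prop:stability-cond} and Lemma~\ref{lem:stability-cond}.

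\textbf{The numerical claim.} The central claim of Step 3, that a wall in the relevant region forces $n<(2r+p)p$, is false. Take $p=r_0=r=1$, $q=0$, $n=3=(2r+p)p$ and $v=(1,2H,a)$ with $0\le a<8$. The spherical class $v_1=(1,H,4)\in D_v^\beta$ has numerical wall $3t^2+3s^2+(4-a)s+a-8=0$, meeting $s=0$ at $t=\sqrt{(8-a)/3}$. Below it, ${\cal O}_X(H)\subset I_Z(2H)$ destabilizes every $I_Z(2H)$ with $Z$ on a curve of $|H|$, which is a proper closed locus. So genuine walls occur between the Gieseker chamber and ${\cal C}_0^\beta$, even though $\langle v,v_1\rangle=8-a>0$.

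What must be excluded is totally semistable walls, and for the codimension claim also divisorial ones. A single totally semistable wall would kill the statement, so these cannot be ``handled by dimension counts''. Your extension count in terms of $\langle u,v-u\rangle$ does not detect them: it breaks down, for instance, when a factor is $F^{\oplus m}$ with $F$ spherical. You also never derive $\langle u,v-u\rangle\ge2$ from $n\ge(2r+p)p$.

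\textbf{The missing ingredient.} The missing input is the lattice criterion Prop.~\ref{prop:birational}. A general member survives iff $\langle v,v_1\rangle\ge0$ for all $(-2)$-vectors $v_1\in D_v^\beta$, and also $\ge2$ for isotropic ones when $\langle v^2\rangle>0$. The paper then argues as follows.
\begin{itemize}
\item Lemmas~\ref{lem:r_1<0} and~\ref{lem:r_1>0} settle the isotropic classes and the classes with $r_1d^\beta-rd_1^\beta<0$. This uses only $n>p^2$, or $n\ge p(r+p)$ when $\langle v^2\rangle=-2$.
\item For spherical $v_1$ with $r_1d^\beta-rd_1^\beta>0$, formula \eqref{eq:r_1>0} reduces $\langle v,v_1\rangle>0$ to
$n(d_1^\beta)^2-1-2rd_1^\beta+kr_1^2>0$, where $k=a_1^\beta d^\beta-a^\beta d_1^\beta>0$.
\item By $n\ge(2r+p)p$ the left side is at least $2r(pd_1^\beta-1)d_1^\beta+(pd_1^\beta)^2-1+kr_1^2$. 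This is positive because $pd_1^\beta\in{\Bbb Z}_{>0}$ and $k>0$. This is the only place the hypothesis $n\ge(2r+p)p$ is used.
\end{itemize}
The codimension-two statement comes from the resulting strict inequalities: $>0$ for spherical classes and $\ge3$ for isotropic ones when $r>0$. No bound on the non-$\mu$-stable locus is needed.
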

 
\begin{rem}\label{rem:r_0=1}
\begin{enumerate}
\item[(1)]
In \cite{CNY}, we treated the case where $r_0=1$.
%In \cite{aCM:K3}, we also studied $\Phi_{X \to X'}^{{\cal E}}(E)[1]$ for the case where 
%$\langle v, v_0 \rangle>0$ 
%and proved that Gieseker semistability is preserved for a general member of ${\cal M}_H(v)$ 
%if $n \geq p^2$.
\item[(2)]
In \cite{Y:Stability}, we proved that
there is a constant $N$ depending on $r$ and $\langle v^2 \rangle$ such that
$D_{X'} \circ \Phi_{X \to X'}^{{\cal E}}(E) \in {\cal M}_{H'}(v')$ 
for all $E \in {\cal M}_H(v)$ if $d \geq N$.  
\begin{NB}
Since $\Pic(X)={\Bbb Z}H$, $H$ is a general polarization.
\end{NB}
%\item[(3)]
%We note that $M_H(r_0 e^{\frac{p}{q}H}) \cong X'$ and there is a 
%universal family ${\cal E}_1$.
%%% be a universal family of stable sheaves on $X$ with the Mukai vector $r_0 e^{\frac{p}{q}H}$. 
%If ${\cal E}$ consists of a family of $\mu$-stable vector bundles, then
%we may choose ${\cal E}_1$ as ${\cal E}^{\vee}$ and 
%$\Phi_{X \to X'}^{{\cal E}}=\Phi_{X \to X'}^{{\cal E}_1^{\vee}}$. Thus we can 
%%%In our main result, we use $\Phi_{X \to X'}^{{\cal E}}$ which is Fourier-Mukai transform.
\end{enumerate}
\end{rem}

To get our result, we use the same strategy as in \cite{CNY}, \cite{aCM:K3}.
Thus we shall study totally semistable walls in the space of Bridgeland stability conditions.

\begin{rem}\label{rem:disprepant}
For an abelian surface, walls for Bridgeland stability corresponds to walls for birational geometry. 
On the other hand, for a K3 surface
we have discrepancy 
between walls for Bridgeland stability and walls for birational geometry. 
Indeed totally semistable walls by spherical objects do not affect birational geometry of the moduli space
(cf. sebsection \ref{subsect:n=1}).
\end{rem}

\begin{NB}
\begin{prob}
Assume that $\Phi_{X \to X'}^{{\cal E}}$ induces a birational map
of the moduli of stable sheaves.
Study the properties of ${\cal E}$.

For example, does ${\cal E}$ parameterize stable sheaves?
\end{prob}
\end{NB}

Let us explain the organization of this paper.
In section \ref{sect:stability1},
we recall Bridgeland stability conditions and walls for a K3 surface.
In particular we recall the parameter space of geometric stability conditions
for a K3 surface with the Picard number 1.
In section \ref{sect:wall}, we shall study totally semistable walls and prove
our main result.
In subsection \ref{subsect:tss-wall},
we describe totally semistable walls and give some conditions that there is no totally semistable walls
on a curve in the apace of stability conditions.
In subsection \ref{subsect:FM-preserve}, we study the action of Fourier-Mukai transform
on the space of geometric stability conditions, and prove our main results.
In subsection \ref{subsect:aCM}, we shall slightly refine the birational correspondence
in \cite{aCM:K3}. Thus we give a condition that the indeterminacy locus is of codimension 2
(Theorem \ref{thm:a<0}).
We also give an asymptotic result with a different flavor (Theorem \ref{thm:a<<0}).
In section \ref{sect:example},
we give birational automorphisms of $M_H(v)$ by $D_X \circ \Phi_{X \to X}^{{\cal E}}$
(Proposition \ref{prop:bir-auto}).
In subsection \ref{subsect:n=1}, we treat the case where $X$ is a K3 surface with $n=1$. 
In this case, the assumption of Theorem \ref{thm:n0} does not hold. 
We give examples of birational automorphisms of $M_H(v)$ by a Fourier-Mukai transform
whose kernel is not a family of stable sheaves. 
%In Appendix, we shall discuss the case of an abelian surface.
%Since there is no spherical object, we get a better condition to preserve
%the semistability.

{\it Notation.}
\begin{enumerate}
\item
Let ${\bf D}(X)$ be the bounded derived category of the category $\Coh(X)$ 
of coherent sheaves on $X$.
\item
Let $(H^*(X,{\Bbb Z})_{\alg},\langle\;\;,\;\; \rangle)$ be the 
algebraic part of the Mukai lattice of $X$.
Thus 
$$
H^*(X,{\Bbb Z})_{\alg}:={\Bbb Z} \oplus \NS(X) \oplus {\Bbb Z}
$$
 and
\begin{equation}
\langle x,y \rangle:=(x_1 \cdot y_1)-x_0 y_2-y_0 x_2 \in {\Bbb Z}
\end{equation}
for $x=(x_0,x_1,x_2), y=(y_0,y_1,y_2) \in  {\Bbb Z} \oplus \NS(X) \oplus {\Bbb Z}$.
\item
For an object $E \in {\bf D}(X)$, 
let $v(E):=\ch(E)\sqrt{\td_X} \in H^*(X,{\Bbb Z})_{\alg}$ be the 
Mukai vector of $E$. 
\item
For $v=(r,dH,a) \in H^*(X,{\Bbb Z})_{\alg}$,
${\cal M}_H(v)$ denotes the moduli stack of semistable sheaves $E$ with $v(E)=v$.
Let $M_H(v)$ be the moduli scheme of stable sheaves $E$ with $v(E)=v$.
It is an irreducible 
symplectic manifold of dimension $\langle v^2 \rangle+2$, if
$v$ is primitive. 
\item
For $D \in \NS(X)$,
\begin{equation}
e^D (x_0,x_1,x_2):=(x_0,x_1+x_0 D,x_2+(D \cdot x_1)+\tfrac{(D^2)}{2}x_0).
\end{equation} 
\item
For $E \in {\bf D}(X)$, we set 
\begin{equation}\label{eq:dualize}
D_X(E):={\bf R}{\cal H}om_{{\cal O}_X}(E,{\cal O}_X) \in {\bf D}(X).
\end{equation}
Then $D_X$ defines a contravariant functor ${\bf D}(X) \to {\bf D}(X)$. 
\end{enumerate}

\section{Stability conditions and walls.}\label{sect:stability1}

\subsection{Stability conditions}
Let us briefly recall some notation on stability conditions of Bridgeland
for a K3 surface $X$.
For more details, see \cite{Br:stability}, \cite{Br:3} and \cite{Br:4}.
A stability condition $\sigma=(Z_\sigma,{\cal P}_\sigma)$ 
on ${\bf D}(X)$
consists of a group homomorphism $Z_\sigma: {\bf D}(X) \to {\Bbb C}$
and a slicing ${\cal P}_\sigma$ of ${\bf D}(X)$ such that if
$0 \ne E \in {\cal P}_\sigma(\phi)$ then 
$Z_\sigma(E) =m(E)\exp(\pi \sqrt{-1} \phi)$ 
for some $m(E) \in {\Bbb R}_{>0}$.
The set of stability conditions has a structure of complex 
manifold.
We denote this space by $\Stab(X)$.

\begin{defn}\label{defn:B-stable}
\begin{enumerate}
\item[(1)]
A $\sigma$-semi-stable object $E$ of phase $\phi$ is
an object of  ${\cal P}_\sigma(\phi)$.
If $E$ is a simple object of ${\cal P}_\sigma(\phi)$, then
$E$ is $\sigma$-stable. 
\item[(2)]
For $v \in H^*(X,{\Bbb Z})_{\alg}$,
we denote the moduli stack of $\sigma$-semi-stable objects $E$ with 
$v(E)=v$ by ${\cal M}_\sigma(v)$,
where we usually choose 
$\phi:=\mathrm{Im}(\log Z_\sigma(v))/\pi \in (-1,1]$.
\end{enumerate}
\end{defn}

By \cite[Prop. 5.3]{Br:stability},
giving a stability condition $\sigma$ is
the same as giving a bounded $t$-structure on ${\bf D}(X)$ 
and a stability function $Z_\sigma$ on its heart ${\cal A}_\sigma$
with the Harder-Narasimhan property.
For $\sigma=(Z_\sigma,{\cal P}_\sigma)$,
we have the relation ${\cal A}_\sigma ={\cal P}_\sigma((0,1])$,
where ${\cal P}_\sigma((0,1])$ is the subcategory of ${\bf D}(X)$
generated by semi-stable objects $E \in {\cal P}_\sigma(\phi)$ with 
$\phi \in (0,1]$.  
Since the pair $(Z_\sigma,{\cal A}_\sigma)$
defines a stability condition, 
we also use the symbol $\sigma=(Z_\sigma,{\cal A}_\sigma)$
to denote a stability condition.
For a $\sigma$-semi-stable object $E \in 
{\cal P}_\sigma(\phi)$, we set $\phi_\sigma(E)=\phi$.
For $E \in {\cal P}_\sigma((0,1])$,
we also set $\phi_\sigma(E) \in (0,1]$ 
by $Z_\sigma(E)=m(E)\exp(\pi \sqrt{-1} \phi_\sigma(E))$. 

Let $\Stab(X)$ be the space of stability conditions.
For an equivalence $\Phi:{\bf D}(X) \to {\bf D}(X')$,
we have an isomorphism $\Phi:\Stab(X) \to \Stab(X')$
such that 
$\Phi(\sigma)$ $(\sigma \in \Stab(X))$ 
is a stability condition given by
\begin{equation}\label{eq:FM-action}
\begin{split}
Z_{\Phi(\sigma)}=& Z_\sigma \circ \Phi^{-1}:{\bf D}(X') \to {\Bbb C},\\
{\cal P}_{\Phi(\sigma)}(\phi)=& \Phi({\cal P}_\sigma(\phi)).
\end{split}
\end{equation}
We also have an action of the universal covering 
$\widetilde{\GL}_2^+({\Bbb R})$ of $\GL_2^+({\Bbb R})$ on 
$\Stab(X)$.
Since ${\Bbb C}^{\times} \subset  \GL_2^+({\Bbb R})$,
we have an injective homomorphism
${\Bbb C} \to  \widetilde{\GL}_2^+({\Bbb R})$.
Thus 
we have an action of $\lambda \in {\Bbb C}$ on
$\Stab(X)$. 
For a stability condition $\sigma \in \Stab(X)$, 
$\lambda(\sigma)$ is %a stability condition such that 
given by
\begin{equation}\label{eq:C-action}
\begin{split}
Z_{\lambda(\sigma)}=&
\exp(-\pi  \sqrt{-1} \lambda)Z_\sigma\\
{\cal P}_{\lambda(\sigma)}(\phi)=&
{\cal P}_\sigma(\phi+\mathrm{Re}\lambda).
\end{split}
\end{equation}

\subsection{Geometric stability conditions}
Let $\Delta^+(X)$ be the set of  
$(-2)$-vectors $v_1 \in H^*(X,{\Bbb Z})_{\alg}$
with $\rk v_1 >0$.
For 
$$
(\beta, \omega) \in 
\left(\NS(X)_{\Bbb R} \times \Amp(X)_{\Bbb R} \setminus 
\bigcup_{v_1 \in \Delta^+(X)}\left\{(\beta,\omega) \left|
\langle e^{\beta+\sqrt{-1} \omega},v_1 \rangle 
\in {\Bbb R}_{\leq 0} \right. \right\}\right),
$$
%$(\beta,\omega) \in \NS(X)_{\Bbb R} \times \Amp(X)_{\Bbb R}$,
Bridgeland \cite{Br:3} constructed a stability condition
$\sigma_{(\beta,\omega)}=(Z_{(\beta,\omega)},{\cal A}_{(\beta,\omega)})$
which is characterized
by the stability of ${\Bbb C}_x$ ($x \in X$), where
$Z_{(\beta,\omega)}(\bullet)=
\langle e^{\beta+\sqrt{-1}\omega},v(\bullet) \rangle:
{\bf D}(X) \to {\Bbb C}$ is the stability function and
${\cal A}_{(\beta,\omega)}$ 
is an abelian category which is a tilting of $\Coh(X)$
by a torsion pair $({\cal T}_{(\beta,\omega)},{\cal F}_{(\beta,\omega)})$:
\begin{enumerate}
\item
${\cal T}_{(\beta,\omega)}$ is a full subcategory of $\Coh(X)$ generated by
torsion sheaves and $\mu$-stable sheaves $E$ with 
$(c_1(E)-\rk E \beta,\omega)>0$ and
\item
${\cal F}_{(\beta,\omega)}$ is a full subcategory of $\Coh(X)$ generated by
$\mu$-stable sheaves $E$ with 
$(c_1(E)-\rk E \beta,\omega) \leq 0$.
\end{enumerate}
This kind of stability conditions are called geometric.
To be more precise,
for a geometric stability condition
$\sigma=(Z_\sigma,{\cal A}_\sigma)$,
we require that
$\mathrm{Re}\mho, \mathrm{Im}\mho$ span a positive definite
2-plane of $H^*(X,{\Bbb R})$,
where $Z_\sigma(\bullet)=\langle \mho,\bullet \rangle$
with $\mho \in H^*(X,{\Bbb Q})_{\alg} \otimes {\Bbb C}$. 
Up to the action of $\widetilde{\GL}_2^+({\Bbb R})$, 
there is $(\beta,\omega)$ such that
$\sigma=\sigma_{(\beta,\omega)}$.
We set ${\cal M}_{(\beta,\omega)}(v):=
{\cal M}_{\sigma_{(\beta,\omega)}}(v)$.

\subsection{Walls and cambers}\label{subsect:wall/chamber}

Let $\Stabd(X)$ denote the connected component of $\Stab(X)$ 
containing geometric stability conditions. 
For a fixed Mukai vector $v \in H^{*}(X,{\Bbb Z})_{\alg}$, there exists a locally finite set of \emph{walls} (real codimension one submanifolds with boundary) in $\Stabd(X)$, depending only on $v$, with the following properties (see \cite{Br:3}):
\begin{enumerate}
\item[(1)]
When $\sigma$ varies in a chamber, that is, a connected component of 
the complement of the union of walls, the sets of 
$\sigma$-semistable and $\sigma$-stable objects of class $v$ do not change. 
If $v$ is primitive, then $\sigma$-stability coincides with $\sigma$-semistability 
for $\sigma$ in a chamber for $v$.

\item[(2)]
 When $\sigma$ lies on a wall $W \subset\Stabd(X)$, 
there is a $\sigma$-semistable object of class $v$ that is unstable in one of the adjacent chambers 
and semistable in the other adjacent chamber.
 If $\sigma= (Z_\sigma, {\cal A}_\sigma)$ lies on a wall, there exists a $\sigma$-semistable object $E$ 
of Mukai vector $v$ and a subobject $F \subset E$ in ${\cal A}_\sigma$ 
with the same $\sigma$-slope but 
$v(F) \not \in {\Bbb R} v$.

\item[(3)]
 Given a polarization $H\in\Amp(X)$ and the Mukai vector $v$ of an $H$-Gieseker semistable sheaf, 
there exists a chamber ${\cal C}$ for $v$, the \emph{Gieseker chamber}, 
where the set of $\sigma$-semistable objects of class 
$v$ coincides with the set of $H$-Gieseker semistable sheaves \cite[Prop. 14.2]{Br:3}.
\end{enumerate}

%\section{Stability conditions on a K3 surface of Picard rank 1.}\label{sect:stability2}

%\subsection{Geometric stability conditions and walls.}

\subsection{The case of Picard number 1.}\label{subsect:rho=1}

Assume that 
$\Pic(X) = \mathbb{Z} H$, where $H$ is the ample generator with $(H^2)=2n$. 
Then writing $\beta = sH$ and $\omega = tH$, the stability conditions $\sigma_{(sH,tH)}$ 
correspond to points $(s,t)$ in the subset ${\Bbb H}^0$ of the upper half plane ${\Bbb H}$
defined by
\begin{equation}\label{eq:geometric-def}
{\Bbb H}^0={\Bbb H} \setminus \bigcup_{v_1 \in \Delta^+(X)} 
\left\{(s,t) \left| 
\langle e^{(s+t \sqrt{-1})H},v_1 \rangle \in {\Bbb R}_{\leq 0}
\right. \right\},
\end{equation}
where $\Delta^+(X)$ is the set of Mukai vectors $v_1=(r_1,d_1 H,a_1)$ 
such that $\langle v_1^2 \rangle=-2$ and $r_1>0$.

\begin{defn}\label{defn:W}
We fix a Mukai vector $v=(r,dH,a)$.
For a Mukai vector $v_1=(r_1,d_1 H,a_1)$ with $v_1 \not \in {\Bbb Q}v$,
let $W_{v_1}$ be a closed subset of ${\Bbb H}$ such that
$$
W_{v_1}=\{(s,t) \in {\Bbb H} \mid {\Bbb R} Z_{\sigma_{(sH,tH)}}(v)=
 {\Bbb R}Z_{\sigma_{(sH,tH)}}(v_1) \}.
$$
\end{defn}

For a wall $W$ in ${\Bbb H}^0$,
there is $v_1$ such that $W \subset W_{v_1}$.  

\begin{lem}[{\cite[Lem. 3.2]{aCM:K3}}]\label{lem:wall}
Let $W_{v_1}'$ be a connected component of $W_{v_1} \cap {\Bbb H}^0$.
\begin{enumerate}
\item[(1)]
$W_{v_1}$ is a semi-circle if $rd_1-r_1 d \ne 0$ and a vertical half line defined by $rs-d=0$ if $rd_1-r_1 d=0$.
\item[(2)]
For a point $(s_0,t_0) \in W_{v_1}'$,
we assume that there are $\sigma_{(s_0 H,t_0 H)}$-semistable 
objects $E_i$ $(i=1,2)$ such that $v(E_i)=v_i$ and
$\phi_{\sigma_{(s_0 H,t_0 H)}}(E_1)=\phi_{\sigma_{(s_0 H,t_0 H)}}(E_2)$.
Then $E_i$ are $\sigma_{(s H,t H)}$-semistable 
with $\phi_{\sigma_{(s H,t H)}}(E_1)=\phi_{\sigma_{(s H,t H)}}(E_2)$
for all $(s,t) \in W_{v_1}'$.
%\item[(2)]
%$W_{v_1} \cap U_\pm$ is connected.
\item[(3)]
$W_{v_1}'$ is a wall in ${\Bbb H}^0$ 
if and only if $W_{v_1}'$ is a wall at a point.   
\end{enumerate}
\end{lem}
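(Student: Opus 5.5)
The plan is to prove (1) by an explicit computation of the central charge, (2) by an open–closed argument along $W_{v_1}'$ that rests on a rank-3 lattice observation, and (3) as a formal consequence of (2).

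For (1) I would compute the central charge directly. Since $e^{(s+t\sqrt{-1})H}=(1,(s+t\sqrt{-1})H,n(s+t\sqrt{-1})^2)$, for $v=(r,dH,a)$ we get
\begin{equation*}
Z_{(sH,tH)}(v)=2nd(s+t\sqrt{-1})-a-rn(s+t\sqrt{-1})^2,
\end{equation*}
so $\mathrm{Re}\,Z=2nds-a-rn(s^2-t^2)$ and $\mathrm{Im}\,Z=2nt(d-rs)$. The condition ${\Bbb R}Z(v)={\Bbb R}Z(v_1)$ means $\mathrm{Re}Z(v)\,\mathrm{Im}Z(v_1)-\mathrm{Re}Z(v_1)\,\mathrm{Im}Z(v)=0$. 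After dividing by $2nt>0$, the cubic terms in $s$ cancel. The coefficients of $s^2$ and of $t^2$ are both $n(rd_1-r_1d)$, and the remaining terms are linear in $s$ or constant. Hence, if $rd_1-r_1d\neq 0$, the locus is a circle centred on the $s$-axis, intersected with $t>0$, i.e.\ a semicircle. If $rd_1=r_1d$, what remains is a linear equation in $s$. Using $v_1\notin{\Bbb Q}v$, I would check that it is a nonzero multiple of $rs-d$; for $r=0$ one uses $d\ne0$ and $r_1=0$. This part is routine.

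For (2) the key observation is special to Picard number $1$: $H^*(X,{\Bbb Z})_{\alg}\otimes{\Bbb R}$ has rank $3$. At a fixed $(s_0,t_0)\in{\Bbb H}$ the condition $\mathrm{Im}\bigl(Z(u)\overline{Z(v)}\bigr)=0$ is a single nonzero real linear condition on $u$. Hence the classes $u$ with $Z(u)\in{\Bbb R}Z(v)$ form a $2$-dimensional space, which must be ${\Bbb R}v+{\Bbb R}v_1$. Now take the Jordan--H\"older factors of $E_1,E_2$ at $\sigma_0=\sigma_{(s_0H,t_0H)}$; their central charges lie on the ray of $Z(v)$, so their Mukai vectors lie in ${\Bbb R}v+{\Bbb R}v_1$. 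Consequently, at every point of $W_{v_1}$ all these classes have mutually aligned central charges.

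Let $S\subset W_{v_1}'$ be the set of points where all these factors are stable with equal phase; it contains $(s_0,t_0)$. Openness: stability of a fixed object is open in $\Stabd(X)$ (by local finiteness of walls for the factor's class, and because phases move continuously). The phases of the factors are aligned along $W_{v_1}'$ and stay close, so they remain equal. Extensions of semistable objects of equal phase are semistable, so $E_1,E_2$ remain semistable of equal phase. Closedness: semistability is a closed condition. So I would instead run the argument directly with $T=\{\sigma\in W_{v_1}': E_1,E_2 \text{ semistable of equal phase}\}$, which is closed. To see that $T$ is open, at any point of $T$ I re-take the Jordan--H\"older factors, which again have classes in ${\Bbb R}v+{\Bbb R}v_1$, and apply the openness argument above. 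Connectedness of $W_{v_1}'$ then gives $T=W_{v_1}'$.

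The main obstacle is the uniform phase control. One must ensure the phases are lifted consistently, i.e.\ not shifted by an integer, while moving inside ${\Bbb H}^0$; this is exactly why one restricts to a connected component avoiding the excluded loci. One also needs the local finiteness statement of subsection \ref{subsect:wall/chamber} for each factor class.

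For (3), one direction is trivial: if $W_{v_1}'$ is a wall in ${\Bbb H}^0$, it is a wall at each of its points. Conversely, suppose at one point there is a semistable $E$ with $v(E)=v$ and a subobject $F$ of the same phase, $v(F)\notin{\Bbb R}v$. Then $v(F)\in{\Bbb R}v+{\Bbb R}v_1$, so applying (2) to $F$ and $E/F$ shows that the same destabilizing pattern persists at every point of $W_{v_1}'$. Hence $W_{v_1}'$ is a wall in ${\Bbb H}^0$.
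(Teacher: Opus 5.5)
Your proposal is correct. The paper gives no proof of this lemma; it simply quotes \cite[Lem.~3.2]{aCM:K3}. Your argument is the standard proof: the direct computation for (1); for (2), the observation that $\rk H^*(X,{\Bbb Z})_{\alg}=3$ forces every class whose central charge lies on ${\Bbb R}Z(v)$ at a point of $W_{v_1}$ into ${\Bbb R}v+{\Bbb R}v_1$, combined with openness of stability, closedness of semistability and connectedness of $W_{v_1}'$; and (3) from (2) applied to $F$ and $E/F$.

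Three small points remain. First, the phase-lifting worry you flag is already settled by your own continuity argument. Second, the circle in (1) is nonempty only when ${\Bbb R}v+{\Bbb R}v_1$ has signature $(1,1)$. Third, if ``wall'' in (3) means a locus where the set of semistable objects of class $v$ actually changes, you should add that near each point of $W_{v_1}'$ the subobject $F$ destabilizes $E$ on one side of the wall.
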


\begin{rem}
Lemma \ref{lem:wall} (2) means that $\sigma_{(sH,tH)}$-semistability 
(resp. $\sigma_{(sH,tH)}$-stability) is independent 
along $W_{v_1}'$.
\end{rem}

\section{Walls and Fourier-Mukai transforms}\label{sect:wall}

\subsection{Totally semistable walls in $U_+^\beta$.}\label{subsect:tss-wall}

For $\beta:=\frac{q}{p}H$ $(p,q \in {\Bbb Z}, p \ne 0)$, let
\begin{equation}\label{eq:u0}
u=r_0 e^\beta,\;(r_0>0)
\end{equation}
 be a primitive isotropic Mukai vector.
For our study of Fourier-Mukai transforms in subsection \ref{subsect:FM-preserve},
we need a description of 
%our complicated parameter space 
${\Bbb H}^0$ 
%of geometric stubility conditions
in a neighborhood of the vertical line $ps-q=0$. 
We set
\begin{equation}
U_\pm^\beta:=\left\{(s,t) \left|
0< \pm \left(s-\frac{q}{p} \right)<\min \left\{ \frac{1}{r_0}\sqrt{\frac{1}{4n}},
\frac{1}{2r_0^2} \right\}, 
t>\frac{1}{r_0}\sqrt{\frac{1}{4n}}-\sqrt{\frac{1}{4r_0^2 n}-
\left(s-\frac{q}{p}\right)^2}
\right. \right\}.
\end{equation}
Then we have the following description.
\begin{prop}[{\cite[Prop. 2.11, Prop. 2.16]{FM-duality}}]\label{prop:stability-cond}
$$
U_\pm^\beta \subset {\Bbb H}^0.
$$
\end{prop}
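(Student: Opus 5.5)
The plan is to compute explicitly which points of ${\Bbb H}$ are removed in \eqref{eq:geometric-def}, and then to show, using the integrality of $\langle u,v_1\rangle$ for $u=r_0e^\beta$, that none of them lies in $U_\pm^\beta$.

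\emph{Step 1: the removed set.} Fix $v_1=(r_1,d_1H,a_1)\in\Delta^+(X)$. Since $e^{(s+t\sqrt{-1})H}=(1,(s+t\sqrt{-1})H,n(s+t\sqrt{-1})^2)$, we have
$$
\langle e^{(s+t\sqrt{-1})H},v_1\rangle
=2n(s+t\sqrt{-1})d_1-a_1-r_1n(s+t\sqrt{-1})^2 .
$$
Its imaginary part is $2nt(d_1-r_1s)$, which vanishes only on the vertical line $s=d_1/r_1$. On that line, the relation $\langle v_1^2\rangle=2nd_1^2-2r_1a_1=-2$ reduces the real part to $-\frac{1}{r_1}+r_1nt^2$. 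Hence the point removed by $v_1$ is exactly the vertical segment
$$
\left\{s=\tfrac{d_1}{r_1},\ 0<t\le \tfrac{1}{r_1\sqrt n}\right\}.
$$
So it suffices to show the following. If $x:=\frac{d_1}{r_1}-\frac{q}{p}$ satisfies $0<|x|<\delta$, where $\delta:=\min\{\frac{1}{2r_0\sqrt n},\frac{1}{2r_0^2}\}$, then the top point $(d_1/r_1,\frac{1}{r_1\sqrt n})$ of this segment lies on or below the lower arc bounding $U_\pm^\beta$. This arc is the lower half of the circle of radius $R=\frac{1}{2r_0\sqrt n}$ centred at $(q/p,R)$.

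\emph{Step 2: the integrality input.} Take $p>0$. Integrality of $u=r_0(1,\frac qpH,\frac{nq^2}{p^2})$ with $\gcd(p,q)=1$ forces $p\mid r_0$, so $p\le r_0$. Completing the square in the real case $t=0$ gives
$$
\langle e^{sH},v_1\rangle=-r_1n\Bigl(s-\tfrac{d_1}{r_1}\Bigr)^2-\tfrac1{r_1}.
$$
Hence $\langle u,v_1\rangle=-r_0\bigl(r_1nx^2+\frac1{r_1}\bigr)$ is a negative integer, and therefore
$$
x^2\ \ge\ \frac{r_1-r_0}{r_0r_1^2n}.\qquad(\ast)
$$

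\emph{Step 3: case split on $r_1$.}

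If $r_1\le 2r_0-1$, then $x=\frac{d_1p-qr_1}{pr_1}\neq 0$ has denominator at most $pr_1$. Thus $|x|\ge\frac{1}{pr_1}\ge\frac{1}{r_0(2r_0-1)}>\frac{1}{2r_0^2}\ge\delta$. So $s=d_1/r_1$ does not occur in $U_\pm^\beta$ at all.

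If $r_1\ge 2r_0$, then $R-\frac1{r_1\sqrt n}\ge0$. The required inequality $\frac1{r_1\sqrt n}\le R-\sqrt{R^2-x^2}$ is equivalent, after moving terms and squaring, to $x^2\ge \frac{2R}{r_1\sqrt n}-\frac1{r_1^2n}=\frac{r_1-r_0}{r_0r_1^2n}$. This is exactly $(\ast)$. Hence every removed point $t\le\frac1{r_1\sqrt n}$ satisfies $t\le R-\sqrt{R^2-x^2}$, and so lies outside $U_\pm^\beta$, whose defining inequality on $t$ is strict.

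The main point, and the only non-routine step, is recognizing that the pairing $\langle u,v_1\rangle\le -1$ gives precisely the bound needed to clear the circular arc. The remaining work is bookkeeping: the small-$r_1$ case uses $p\mid r_0$, and the case $x=0$ is excluded because $s\neq q/p$ on $U_\pm^\beta$.
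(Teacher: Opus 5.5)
Your argument is correct and complete. The paper gives no proof of this statement; it only quotes \cite[Prop.~2.11, Prop.~2.16]{FM-duality}. So your computation is a self-contained replacement rather than a variant of an argument written here.

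Your two cases match the two constraints that define $U_\pm^\beta$, with $R=\frac{1}{2r_0\sqrt n}$:
\begin{enumerate}
\item[(a)] The strip condition $0<|s-\frac{q}{p}|<\frac{1}{2r_0^2}$ forces every segment $s=d_1/r_1$ meeting it to have $r_1\ge 2r_0$. Such a segment therefore stays below height $R$.
\item[(b)] The bound $-\langle u,v_1\rangle\ge 1$ keeps the top point of each segment out of the open disc of radius $R$ tangent to the real axis at $(\frac{q}{p},0)$.
\end{enumerate}
Together these put the whole segment on or below the lower arc.

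Ingredient (b) has a conceptual reading through the coordinate change $(s,t)\mapsto(s',t')$ of \S\ref{subsect:FM-preserve}, which is presumably how the cited reference organizes it. Under this change, the circle bounding $U_\pm^\beta$ goes to the line $t'=1/\sqrt n$. The top point $(d_1/r_1,\,1/(r_1\sqrt n))$ goes to height $1/(-\langle u,v_1\rangle\sqrt n)$. Moreover, $-\langle u,v_1\rangle$ is the rank of $\Phi_{X\to X'}^{{\cal E}}(v_1)$. So your inequality $(\ast)$ just says that the point where $Z$ vanishes on this $(-2)$-vector of rank $r_1'=-\langle u,v_1\rangle\ge 1$ sits at height $t'=1/(r_1'\sqrt n)\le 1/\sqrt n$. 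Your direct version avoids controlling how the equivalence acts on stability conditions (the $\widetilde{\GL}_2^+({\Bbb R})$ ambiguity and the phases), at the price of hiding this picture.

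Two small corrections:
\begin{enumerate}
\item[(1)] The paper does not assume $\gcd(p,q)=1$. State explicitly that you first write $q/p$ in lowest terms with $p>0$; this is harmless, since $U_\pm^\beta$ depends only on $q/p$, $r_0$ and $n$. It matters because your first case uses $p\le r_0$, which can fail otherwise.
\item[(2)] In Step 1, ``the point removed by $v_1$'' should read ``the set removed by $v_1$''.
\end{enumerate}
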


\begin{rem}
Assume that $\gcd(p,q)=1$.
Let $U$ be a small neighborhood of $(\frac{q}{p},0) \in {\Bbb R}^2$. 
If $p \nmid (q^2 n+1)$, then
${\Bbb H}^0$ contains the half straight line defined by $ps-q=0$.
In this case, $U_+^\beta \cap U$ and $U_-^\beta \cap U$ belong to the same chamber
for $v=(0,0,1)$. 
On the other hand, if $p \mid (q^2 n+1)$, then
for the stability conditions $\sigma_+$ and $\sigma_-$
parameterized by $U_+^\beta \cap U$ and $U_-^\beta \cap U$ respectively,
$\sigma_+$ is far from $\sigma_-$ in the space of stability conditions. 
%belong to different sheet
%in the space of stability conditions.
\end{rem}

\begin{defn}
For $v=(r,dH,a)$,
we set 
\begin{equation}
r^\beta=r,\;d^\beta=d-r\frac{q}{p},\; a^\beta=a-2nd\frac{q}{p}+rn\frac{q^2}{p^2}.
\end{equation}
\end{defn}

Then
$v=e^\beta (r^\beta,d^\beta H,a^\beta)$ and
$pd^\beta,p^2 a^\beta \in {\Bbb Z}$.
For $v_1=(r_1,d_1 H,a_1)$, we have

\begin{equation}
\begin{split}
& d_1^\beta r-d^\beta r_1 \in {\Bbb Z},\quad
d_1^\beta a^\beta-d^\beta a_1^\beta
%=& \left(d_1-\frac{q}{p}r_1 \right) \left(a-2nd\frac{q}{p}+rn\frac{q^2}{p^2} \right)
%-\left(d-\frac{q}{p}r \right) \left(a_1-2nd_1 \frac{q}{p}+r_1n\frac{q^2}{p^2} \right)
\in \frac{1}{p^2}{\Bbb Z}.
\end{split}
\end{equation}
For a small $\nu>0$, we set
$$
U_\pm^\beta(\nu):=\left\{(s,t) \in U_\pm^\beta \mid 
0< \pm \left(s-\tfrac{q}{p} \right)<\nu \right\}.
$$

\begin{defn}
For a Mukai vector $v=(r,d H,a)$ with 
$r \geq 0$, $d^\beta>0$ and $a^\beta \geq 0$,
let $D_v^\beta$ be the set of Mukai vectors
$v_1=(r_1,d_1 H,a_1)=e^\beta (r_1,d_1^\beta H,a_1^\beta)$ such that
%$v_1 \ne v({\cal O}_X)$,
\begin{equation}
\langle v_1^2 \rangle=-2\epsilon\; (\epsilon=0,1),\;\;
d^\beta \geq d_1^\beta > 0,\;\;
\frac{a_1^\beta d^\beta-a^\beta d_1^\beta}{r_1 d^\beta-r d_1^\beta}>0
\end{equation}
and $r_1>r$ if $d_1^\beta=d^\beta$.
\end{defn}

\begin{NB}
Assume that $d_1^\beta=0$.
Since $d_1-r_1 s>0$ for $s \in U_+^\beta$, $r_1<0$.
Then we also get $a_1<0$.
So if $a^\beta \geq 0$, then $\langle v_1,v \rangle=-r_1 a^\beta-r a_1^\beta>0$ unless $r=0$. 
\end{NB}

\begin{NB}
\begin{rem}\label{rem:symmetry}
For $v_1=(r_1,d_1 H,a_1) \in D_v^\beta$,
$d_1^\beta=d^\beta$ implies $\frac{a_1^\beta-a^\beta}{r_1-r}>0$.
Hence we have a bijection
\begin{NB2}
\begin{equation}
\begin{matrix}
D_{e^\beta(r,d^\beta H,a^\beta)}^\beta & \to & 
D_{e^{\beta'}(r_0 a^{\beta'},d^{\beta'} H,r/r_0)}^{\beta'}\\
e^\beta(r_1,d_1^\beta H,a_1^\beta) & \mapsto & e^{\beta'}(r_0 a_1^{\beta'},d_1^{\beta'} H,r_1/r_0).
\end{matrix}
\end{equation}
\end{NB2}
\begin{equation}
\begin{matrix}
D_v^\beta & \to & 
D_{v'}^{\beta'}\\
v_1 & \mapsto & v_1',
\end{matrix}
\end{equation}
where
$v':=\Phi_{X \to X'}^{\cal E}(v)^{\vee}, v_1':=\Phi_{X \to X'}^{\cal E}(v_1)^{\vee}$.
\end{rem}
\end{NB}

\begin{prop}[{\cite[Prop. 3.6]{CNY}}]\label{prop:birational}
Let $v=(r,dH,a)$ be a Mukai vector such that
$r \geq 0$, $d^\beta,a^\beta>0$ and $\langle v^2 \rangle \geq -2$. 
Then the following conditions are equivalent.
\begin{enumerate}
\item[(1)]
${\cal M}_H(v) \cap {\cal M}_{(sH,tH)}(v) \ne \emptyset$
for $(s,t) \in U_+^\beta(\nu)$ with $(0<\nu \ll 1)$.
\item[(2)]
\begin{enumerate}
\item
If $\langle v^2 \rangle>0$, then
$\langle v,v_1 \rangle \geq 0$ for $v_1 \in D_v^\beta$ with $\epsilon=1$ and
$\langle v,v_1 \rangle \geq 2$ for $v_1 \in D_v^\beta$ with $\epsilon=0$.
\item
If $\langle v^2 \rangle \leq 0$, then
$\langle v,v_1 \rangle \geq 0$ for $v_1 \in D_v^\beta$ with $\epsilon=1$. 
\end{enumerate}
\end{enumerate}
\end{prop}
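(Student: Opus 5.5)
The plan is to reduce (1) to the absence of a \emph{totally semistable wall} for $v$ between the Gieseker chamber and $U_+^\beta(\nu)$, and then to identify the possible totally semistable walls numerically with the vectors of $D_v^\beta$.

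First, fix $(s,t)\in U_+^\beta(\nu)$ and connect it to a point with $t\gg 0$ by a path inside $U_+^\beta$. By Proposition \ref{prop:stability-cond} this path lies in ${\Bbb H}^0$. For $t\gg0$ we are in the Gieseker chamber, so ${\cal M}_{(sH,tH)}(v)={\cal M}_H(v)$ there. The walls met along the path are components $W_{v_1}'$, with $W_{v_1}$ a semicircle as in Lemma \ref{lem:wall} (1). By Lemma \ref{lem:wall} (2), (3), the stability of each object along such a component is constant.

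I would then compute which semicircles $W_{v_1}$ actually separate $U_+^\beta(\nu)$ (for $\nu\ll1$) from the Gieseker chamber. Write $v_1=e^\beta(r_1,d_1^\beta H,a_1^\beta)$. The condition $\mathrm{Im}\,Z(v_1)>0$ near $s=q/p$ forces $d_1^\beta>0$ (resp. $d_1^\beta\ge 0$). The requirement that $v_1$ have phase between $0$ and that of $v$ gives $d_1^\beta\le d^\beta$, with the tie-break $r_1>r$ when $d_1^\beta=d^\beta$. The requirement that the semicircle end at $s$-coordinates straddling $q/p$ from the correct side, so that it cuts every $U_+^\beta(\nu)$, is the inequality $\frac{a_1^\beta d^\beta-a^\beta d_1^\beta}{r_1d^\beta-rd_1^\beta}>0$. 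This is a direct computation of the centre and radius of $W_{v_1}$ in the $\beta$-twisted coordinates. Hence the relevant walls are exactly those $W_{v_1}$ with $v_1$ satisfying the inequalities defining $D_v^\beta$, apart from the constraint $\langle v_1^2\rangle\in\{0,-2\}$.

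Next I would invoke the Bayer--Macr\`i classification of totally semistable walls on K3 surfaces. A wall is totally semistable for $v$ (so no Gieseker semistable sheaf survives it, which is the negation of (1) when it is the first wall crossed) iff the associated hyperbolic lattice contains either a spherical class $v_1$ with $\langle v,v_1\rangle<0$, or an isotropic class $v_1$ with $\langle v,v_1\rangle=1$, or $\langle v,v_1\rangle=0$ when $\langle v^2\rangle>0$ in the relevant sub-cases. In the case $\langle v^2\rangle\le 0$, i.e. $v$ spherical or isotropic, ${\cal M}_H(v)$ consists of rigid sheaves or a surface. Only the spherical destabilizer matters: a $(-2)$-class $v_1$ on the wall with $\langle v,v_1\rangle<0$ forces $\Hom(E_1,E)\neq0$ for every $E$ by $\chi$, since $\langle v_1,v\rangle<0$ gives $\hom(E_1,E)+\hom(E,E_1)>0$. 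This yields the dichotomy (a)/(b).

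For (2)$\Rightarrow$(1), I would argue by contradiction. Take the first wall $W'$ crossed (the one with largest $t$ along the path) that destabilizes all of ${\cal M}_H(v)$, with its lattice. Then show the destabilizing class can be chosen in $D_v^\beta$ with $\langle v,v_1\rangle<0$ ($\epsilon=1$) or $\langle v,v_1\rangle\le1$ ($\epsilon=0$). For (1)$\Rightarrow$(2), given $v_1\in D_v^\beta$ violating (2), I would use the stable spherical (resp. isotropic-family) object $E_1$ of class $v_1$ on $W_{v_1}$. The rank of $\Hom(E_1,E)$ then shows every Gieseker semistable $E$ is destabilized on $W_{v_1}$, and hence is not semistable in $U_+^\beta(\nu)$.

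The main obstacle I expect is the middle step, the reduction from an abstract totally semistable wall to a class in $D_v^\beta$. The classification produces some $v_1$ in the rank-two lattice of the wall, but one must choose it, possibly replacing it by $v-v_1$ or by a reflection image, so that it has $0<d_1^\beta\le d^\beta$ and is a subobject class rather than a quotient class. One must also check that spherical objects with $d_1^\beta=0$ (which would lie on the boundary line $ps=q$) cannot occur, using $a^\beta>0$ as in the computation $\langle v_1,v\rangle=-r_1a^\beta-ra_1^\beta>0$. I would handle this by working in the $\beta$-twisted coordinates and using the integrality $pd_1^\beta,p^2a_1^\beta\in{\Bbb Z}$ to pin down the sign of $d_1^\beta$.
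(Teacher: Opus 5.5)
Your overall route is the one the paper intends (its proof only defers to \cite[Prop. 3.6]{CNY}). In the coordinate $x=s-\frac{q}{p}$ the wall $W_{v_1}$ is $n(r_1d^\beta-rd_1^\beta)(x^2+t^2)-(r_1a^\beta-ra_1^\beta)x-(a_1^\beta d^\beta-a^\beta d_1^\beta)=0$. So it meets the line $s=\frac{q}{p}$ exactly when the ratio in the definition of $D_v^\beta$ is positive, and one then applies \cite{BM} to the walls met along a vertical segment. However, the proposal is loose exactly where the content lies.

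First, the criterion you quote is not Bayer--Macr\`i's. For primitive $v$ with $\langle v^2\rangle>0$, a wall is totally semistable iff ${\cal H}$ contains an \emph{effective} spherical class $u$ with $\langle u,v\rangle<0$, or an isotropic $w$ with $\langle w,v\rangle=1$. There is no pairing-$0$ clause: a spherical class with pairing $0$ gives a Brill--Noether divisorial wall, which (2)(a) allows. Also, on the wall $v_1$ and $v$ have the same phase. So $d_1^\beta\le d^\beta$ (with the tie-break) is not a phase condition; it is the mass condition $0<\mathrm{Im}Z(v_1)<\mathrm{Im}Z(v)$ as $s$ decreases to $\frac{q}{p}$.

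Second, the step you flag as the main obstacle is not settled by integrality of $pd_1^\beta,p^2a_1^\beta$. The missing idea is positivity in ${\cal H}$. The kernel of $Z$ on ${\cal H}\otimes{\Bbb R}$ is orthogonal to the positive plane spanned by $\mathrm{Re}\,e^{(s+\sqrt{-1}t)H}$ and $\mathrm{Im}\,e^{(s+\sqrt{-1}t)H}$, hence negative definite. So all effective classes of non-negative square lie in the closure of one component of the positive cone and pair non-negatively. This has two consequences.
\begin{itemize}
\item It gives $\langle v,w\rangle\ge 1$ for effective isotropic $w$, so ``$\ge 2$'' in (2)(a) just means ``$\ne 1$''.
\item It forces the Bayer--Macr\`i classes to have smaller mass than $v$. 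If $|Z(u)|>|Z(v)|$, then $u-v$ is effective with $\langle (u-v)^2\rangle\ge\langle v^2\rangle>0$ and $\langle u-v,v\rangle<0$, which is impossible. The same works for $w$, using $\langle v^2\rangle\ge 2$.
\end{itemize}
Only with this do the destabilizing classes land in $D_v^\beta$ (with $d_1^\beta=0$ excluded, as you note).

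For $\langle v^2\rangle\le 0$ you only argue that a spherical class with negative pairing destabilizes. The converse, that nothing else produces a totally semistable wall, is precisely the content of (2)(b), and it is only asserted.
\begin{itemize}
\item For spherical $v$ it can be done as follows. If $E$ is strictly $\sigma_0$-semistable with Jordan--H\"older factors $E_i$, then $-2=\sum_i\langle v(E_i),v\rangle$. By the same positivity, a factor of non-negative square and smaller mass pairs non-negatively with $v$. So some factor is spherical with negative pairing, and its class lies in $D_v^\beta$.
\item For isotropic $v$ you must at least check that an isotropic $w\in D_v^\beta$ with $\langle v,w\rangle=1$ is harmless. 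It is, because $v-w\in D_v^\beta$ is then an effective spherical class with $\langle v-w,v\rangle=-1$.
\end{itemize}

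Your Hom-count for (1)$\Rightarrow$(2) also needs care. It requires a $\sigma_0$-\emph{stable} object of class $v_1$, obtained by passing to a Jordan--H\"older factor, again via positivity. In the isotropic case $\chi=-\langle w,v\rangle=-1$, so there you need the Hilbert--Chow argument of \cite{BM}, not a Hom count.

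Finally, two steps rest on facts you do not state.
\begin{itemize}
\item ``Every $E$ is destabilized on the wall, hence no $E\in{\cal M}_H(v)$ is semistable in $U_+^\beta(\nu)$'' requires the path to be vertical. Then Lemma \ref{lem:stability} makes such an $E$ semistable on both sides of the wall.
\item The converse, that a general $E$ survives every wall that is not totally semistable, requires irreducibility of ${\cal M}_\sigma(v)$ in the intermediate chambers. For non-primitive $v$ it also requires an extension of \cite{BM}.
\end{itemize}
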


\begin{proof}
The proof is similar to that of \cite[Prop. 3.6]{CNY}.
\end{proof}

\begin{rem}
Assume that $d_1=d$. If $r_1 d-rd_1>0$, then $r_1>rd_1/d=r$.
\begin{NB}
In this case, $C_\lambda$ passes $(0,t_0)$.
\end{NB}
\end{rem}

The following result is well-known.
\begin{lem}\label{lem:stability}
If  $t_1 \leq t \leq t_2$ and $(s,t_1) \in {\Bbb H}^0$, then 
$$
{\cal M}_{(sH,t_1 H)}(v) \cap {\cal M}_{(sH,t_2 H)}(v) \subset {\cal M}_{(sH,t H)}(v).
$$
In particular if ${\cal M}_{(sH,t_1 H)}(v) \cap {\cal M}_{(sH,t_2 H)}(v)  
\ne \emptyset$, then
${\cal M}_{(sH,t_1 H)}(v) \cap {\cal M}_{(sH,t H)}(v) \ne \emptyset$ 
for all $t_1 \leq t \leq t_2$.  
\end{lem}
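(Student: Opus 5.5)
The plan is to use the standard description of objects in the heart ${\cal A}_{(sH,tH)}$ together with the support property (local finiteness of walls) along the vertical segment $\{s\}\times[t_1,t_2]$. Fix $E \in {\cal M}_{(sH,t_1 H)}(v)\cap{\cal M}_{(sH,t_2 H)}(v)$. First, note that the heart ${\cal A}_{(sH,tH)}$ depends only on $s$: the torsion pair $({\cal T}_{(sH,tH)},{\cal F}_{(sH,tH)})$ is defined by the sign of $(c_1-\rk\, sH, tH)$, which for $t>0$ is the sign of $d-rs$. Hence all the stability conditions on the segment share the same heart ${\cal A}:={\cal A}_{(sH,t_1H)}$, and $E\in{\cal A}$ (after a fixed shift). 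Also, the segment lies in ${\Bbb H}^0$: for $v_1\in\Delta^+(X)$, $\mathrm{Im}\,Z_{(sH,tH)}(v_1)=2nt(d_1-r_1s)$ has sign independent of $t$, and when it vanishes one has $\mathrm{Re}\,Z_{(sH,tH)}(v_1) = -a_1^{s}+ nt^2 r_1$ (with $a_1^s$ the $e^{-sH}$-twisted degree-zero part), which is increasing in $t$. So if $(s,t_1)\in{\Bbb H}^0$, i.e. this is $>0$ at $t_1$, it stays $>0$ for $t\ge t_1$.

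Next I would compute slopes. Writing $v(F)=e^{sH}(r_F,d_F H,a_F)$, we have $Z_{(sH,tH)}(F)= -a_F+ntr^2_F\cdot$ (more precisely $-a_F + n t^2 r_F$) $+ 2\sqrt{-1}\,nt\,d_F$. The phase comparison of a subobject $F\subset E$ in ${\cal A}$ against $E$ is governed by the sign of
\[
f_F(t):= d_F\bigl(-a_E+nt^2r_E\bigr)-d_E\bigl(-a_F+nt^2r_F\bigr) = (d_Ea_F-d_Fa_E)+nt^2(d_Fr_E-d_Er_F),
\]
with $F$ destabilizing exactly when $f_F(t)<0$ (in the case $d_E,d_F>0$; the boundary cases $d_F=0$ or $d_E=0$ are handled separately, since then the phase is $1$ and the statement is immediate or trivial). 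The key observation is that $f_F$ is an affine function of $t^2$, so the set of $t>0$ where $f_F(t)\ge 0$ is an interval of the form $\{t^2 \geq c\}$, $\{t^2 \le c\}$, everything, or empty — in every case it is convex in $t$. Hence if $f_F(t_1)\ge0$ and $f_F(t_2)\ge 0$, then $f_F(t)\ge0$ for $t\in[t_1,t_2]$.

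Finally, I would conclude as follows. Suppose $E$ were not $\sigma_{(sH,tH)}$-semistable for some $t\in(t_1,t_2)$. Then there is a subobject $F\subset E$ in the common heart ${\cal A}$ with $f_F(t)<0$. Since $E$ is semistable at $t_1$ and $t_2$ in the same heart, every subobject $F$ satisfies $f_F(t_1)\ge0$ and $f_F(t_2)\ge 0$, which contradicts the convexity established above. The "in particular" clause then follows by choosing any $E$ in the nonempty intersection. The main point requiring care, rather than a real obstacle, is the claim that the heart is constant along the vertical segment and that semistability can be tested by subobjects in that single heart. Both follow from the definition of $({\cal T},{\cal F})$ and the membership of the segment in ${\Bbb H}^0$ checked above. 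The degenerate phase-$1$ objects ($d_F=0$) also need a brief separate check, since there $\mathrm{Re}\,Z$ is monotone in $t$.
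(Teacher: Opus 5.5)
Your proposal is correct and follows the same route as the paper. The paper's argument is that the heart ${\cal A}_{(sH,tH)}$ is independent of $t$, so $E$ has the same subobjects at every point of the segment, and the phase inequalities at $t_1$ and $t_2$ then give the one at $t$. Your observation that the comparison function is affine in $t^2$ (hence monotone in $t>0$), and your check that $\{s\}\times[t_1,\infty)\subset{\Bbb H}^0$, supply exactly the details the paper leaves implicit.

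One harmless slip: with your definition of $f_F$, a subobject destabilizes when $f_F(t)>0$, not when $f_F(t)<0$. Since $f_F$ is monotone in $t$, both its sublevel and superlevel sets are intervals, so the argument is unaffected.
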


\begin{NB}
\begin{proof}
For $E \in M_{(sH,t_1 H)}(v) \cap M_{(sH,t_2 H)}(v)$,
let $E_1$ be a subobject of $E$ in ${\cal A}_{(sH,tH)}$.
Since ${\cal A}_{(sH,tH)}$ is independent of the choice of $t$,
$E_1$ is a subobject of $E$ in ${\cal A}_{(sH,t'H)}$ for all 
$t_1 \leq t' \leq t_2$.
Then $\phi_{(sH,t_i H)}(E_1) \leq \phi_{(sH,t_i H)}(E)$ ($i=1,2$)
imply that $\phi_{(sH,tH)}(E_1) \leq \phi_{(sH,tH)}(E)$.
\end{proof}
\end{NB}

\begin{lem}\label{lem:r_1<0}
Let $v=(r,dH,a)$ be a Mukai vector such that
$r,a^\beta \geq 0$, $d^\beta>0$ and $\langle v^2 \rangle \geq -2$.
Assume that $v_1=(r_1,d_1 H,a_1) \in D_v^\beta$ satisfies
$r_1 d^\beta-r d_1^\beta<0$.
\begin{enumerate}
\item[(1)]
Assume that $n \geq p^2$ and $\langle v^2 \rangle \geq 0$. 
Then $\langle v,v_1 \rangle \geq 2$ and the inequality is strict
unless $n=p^2$ and $r=a^\beta=0$.
\item[(2)]
Assume that $n \geq p(r+p)$ and $\langle v^2 \rangle = -2$. 
Then $\langle v,v_1 \rangle \geq 2$.
\end{enumerate}.

\end{lem}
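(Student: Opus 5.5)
We work throughout in the $\beta$-twisted coordinates $v=e^\beta(r,d^\beta H,a^\beta)$ and $v_1=e^\beta(r_1,d_1^\beta H,a_1^\beta)$. Since $e^\beta$ is an isometry,
\begin{equation*}
\langle v,v_1\rangle=2nd^\beta d_1^\beta-ra_1^\beta-r_1a^\beta,\qquad
\langle v_1^2\rangle=2n(d_1^\beta)^2-2r_1a_1^\beta=-2\epsilon,\qquad
\langle v^2\rangle=2n(d^\beta)^2-2ra^\beta .
\end{equation*}
The hypotheses $v_1\in D_v^\beta$ and $r_1d^\beta-rd_1^\beta<0$ give $0<d_1^\beta\le d^\beta$ and $a_1^\beta d^\beta<a^\beta d_1^\beta$. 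Since $pd^\beta,pd_1^\beta\in{\Bbb Z}_{>0}$, we also have $d^\beta,d_1^\beta\ge 1/p$. The plan is to split according to the sign of $r_1$.

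\emph{Case $r_1\le 0$.} If $r_1=0$, then $\langle v_1^2\rangle=2n(d_1^\beta)^2>0$, which is impossible. So $r_1<0$, and $r_1a_1^\beta=n(d_1^\beta)^2+\epsilon>0$ forces $a_1^\beta<0$. Because $r,a^\beta\ge 0$, both $-ra_1^\beta$ and $-r_1a^\beta$ are $\ge 0$. Hence
\begin{equation*}
\langle v,v_1\rangle\ge 2nd^\beta d_1^\beta\ge \frac{2n}{p^2}.
\end{equation*}
In (1) this is $\ge 2$ by $n\ge p^2$. Equality forces $n=p^2$, $ra_1^\beta=r_1a^\beta=0$ and $d^\beta=d_1^\beta=1/p$; since $a_1^\beta,r_1\neq 0$ this means $r=a^\beta=0$, which is exactly the strictness claim. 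In (2) the bound $n\ge p(r+p)\ge p^2$ gives the same conclusion.

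\emph{Case $r_1>0$.} Then $a_1^\beta=(n(d_1^\beta)^2+\epsilon)/r_1>0$. Set $X:=rd_1^\beta-r_1d^\beta>0$ and $Y:=a^\beta d_1^\beta-a_1^\beta d^\beta>0$. By the integrality remarks, $X\in{\Bbb Z}$, so $X\ge 1$, and $Y\in\frac1{p^2}{\Bbb Z}$, so $Y\ge 1/p^2$. Expanding $XY$ and substituting $ra^\beta=n(d^\beta)^2-\langle v^2\rangle/2$ and $r_1a_1^\beta=n(d_1^\beta)^2+\epsilon$ gives the key identity
\begin{equation*}
d^\beta d_1^\beta\langle v,v_1\rangle = XY+\frac{(d_1^\beta)^2}{2}\langle v^2\rangle-\epsilon (d^\beta)^2 .
\end{equation*}
The goal is then to bound the right-hand side from below by $2d^\beta d_1^\beta$.

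In case (1), $\langle v^2\rangle\ge 0$, so it suffices to show $XY\ge 2d^\beta d_1^\beta+\epsilon(d^\beta)^2$. Here the crude bound $XY\ge 1/p^2$ is far too weak. I would instead estimate $Y$ through $n$: write $Y=a^\beta d_1^\beta-d^\beta\,(n(d_1^\beta)^2+\epsilon)/r_1$, and use $0<r_1<rd_1^\beta/d^\beta$ together with $ra^\beta\le n(d^\beta)^2$. The expression is concave in $r_1$, and both endpoints of the admissible $r_1$-interval give the same value, so its minimum over the interval is attained at the boundary. The task is then to make the integrality gaps $X\ge1$, $pd^\beta, pd_1^\beta\in{\Bbb Z}$ and $p^2Y\in{\Bbb Z}$ interact with the factor $n\ge p^2$, producing $XY\gtrsim n\,d_1^\beta/(p\,\cdot)$ of the required size.

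In case (2), $\langle v^2\rangle=-2$, so the identity reads $d^\beta d_1^\beta\langle v,v_1\rangle=XY-(d_1^\beta)^2-\epsilon(d^\beta)^2$. The extra $r$ in the hypothesis $n\ge p(r+p)$ should be what absorbs these negative terms, entering through $r_1<r$ and $X\le rd_1^\beta$.

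The main obstacle is this sub-case $r_1>0$, namely getting a sharp enough lower bound on $XY$ that actually uses $n\ge p^2$ (respectively $n\ge p(r+p)$). I would first try to express $Y$ as an explicit function of $r_1$, using $r_1a_1^\beta=n(d_1^\beta)^2+\epsilon$ and the relation $r_1<rd_1^\beta/d^\beta$, and then bound $XY$ at the extremal admissible values of $r_1$, which are the endpoints forced by $X\ge 1$ and $Y\ge 1/p^2$.
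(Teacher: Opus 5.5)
Your case $r_1\le 0$ is correct and is the paper's argument; the paper records the slightly sharper bound $\langle v,v_1\rangle\ge 2+a^\beta+r/p^2$ and analyses equality the same way. The gap is the case $r_1>0$. You leave it as ``the main obstacle'' with only a sketched plan, and that plan, bounding $XY$ from \emph{below} via integrality and $n\ge p^2$, cannot be completed as stated. The missing idea is that this case never occurs. Also, $r_1\mapsto Y=a^\beta d_1^\beta-d^\beta(n(d_1^\beta)^2+\epsilon)/r_1$ is strictly increasing on $r_1>0$, so it does not take equal values at the ends of the admissible interval.

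To see the case is empty: $X>0$ forces $r>0$. Then $X>0$, $\langle v^2\rangle\ge 0$, $Y>0$ and $r_1a_1^\beta=n(d_1^\beta)^2+\epsilon\ge n(d_1^\beta)^2$ give
\[
\frac{n d_1^\beta}{r_1}>\frac{n d^\beta}{r}\ge \frac{a^\beta}{d^\beta}>\frac{a_1^\beta}{d_1^\beta}\ge \frac{n d_1^\beta}{r_1},
\]
a contradiction. Equivalently, letting $r_1$ increase to its supremum $rd_1^\beta/d^\beta$ in your formula for $Y$ gives $Y<-\bigl(\tfrac12\langle v^2\rangle(d_1^\beta)^2+\epsilon(d^\beta)^2\bigr)/(rd_1^\beta)\le 0$. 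Note that $n\ge p^2$ plays no role in this case.

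In part (2), $\langle v^2\rangle=-2$ means $a^\beta/d^\beta=nd^\beta/r+1/(rd^\beta)$, so the second link of the chain fails by $1/(rd^\beta)$. You repair it with the integrality $X\ge 1$, which gives $nd_1^\beta/r_1\ge nd^\beta/r+n/(rr_1)$. Moreover $n/r_1\ge 1/d^\beta$, because $r_1<rd_1^\beta/d^\beta\le r$, $d^\beta\ge 1/p$ and $n\ge pr$. Hence $nd_1^\beta/r_1\ge a^\beta/d^\beta$, and the chain again closes to a contradiction. This is exactly where the extra $pr$ in $n\ge p(r+p)$ is used; the $p^2$ part is needed only in the case $r_1<0$. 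Once the case $r_1>0$ is shown to be empty, your $r_1\le 0$ computation proves the whole lemma.
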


\begin{proof}
We note that $a_1^\beta d^\beta-a^\beta d_1^\beta<0$.
\begin{NB}
$v$ may not be primitive.
\end{NB}
If $r_1 \leq 0$, then $n (d_1^\beta)^2+\epsilon=r_1 a_1^\beta$ and $r_1 \ne 0$ imply
$r_1,a_1^\beta<0$. 
Hence
$$
\langle v,v_1 \rangle = 2nd^\beta d_1^\beta-r_1 a^\beta-r a_1^\beta 
\geq 2+a^\beta+r/p^2 \geq 2.
$$
\begin{NB}
The claim holds if $(d_1^\beta)^2 n \geq 1$.
If $(d_1^\beta)^2 n < 1$, then $d_1^\beta <1$.
For example, if $n=2$ and $p=2$, then $d_1^\beta=1/2$ and $(d_1^\beta)^2 n=1/2$.
\end{NB}
If the equality holds, then 
$n=p^2$ and $r=a^\beta=0$.
%If the equality holds, then $n=d=d_1=1$, $a+r=0$. Hence
%$v=(0,H,0)$. If $\langle v_1^2 \rangle=0$, then 
%we also have $v_1=(-1,H,-1)$.
\begin{NB}
If $d^\beta > p/2$, then
$d^\beta d_1^\beta >1/2$. Hence
$\langle v,v_1 \rangle>1$.
\end{NB}
 
If $r_1 >0$, then
$n (d_1^\beta)^2+\epsilon=r_1 a_1^\beta<r_1 a^\beta \frac{d_1^\beta}{d^\beta}$.
We first assume that $\langle v^2 \rangle \geq 0$.
By $r_1 d^\beta-r d_1^\beta \leq -1$,
we get $r>0$ and
$$
\frac{d_1^\beta}{r_1}n \geq \frac{d^\beta}{r} n +\frac{n}{r r_1}
\geq \frac{a^\beta}{d^\beta}+\frac{n}{r r_1}>\frac{a^\beta}{d^\beta}.
$$
Hence 
$$
r_1 d_1^\beta \frac{d_1^\beta}{r_1}n>r_1 a^\beta \frac{d_1^\beta}{d^\beta}>
n (d_1^\beta)^2+\epsilon \geq n(d_1^\beta)^2,
$$
which is a contradiction. Therefore (1) holds.
\begin{NB}
If $r_1>0$, then the assumption $n \geq p^2$ is not used. 
\end{NB}

Assume that $\langle v^2 \rangle=-2$. 
Then we also get $r>0$ and
$$
\frac{d_1^\beta}{r_1}n 
\geq \frac{a^\beta}{d^\beta}+\frac{1}{r} \left(\frac{n}{r_1}-\frac{1}{d^\beta} \right) =
\frac{a^\beta}{d^\beta}+\frac{1}{r} \left(\frac{n}{r_1}-\frac{p}{pd-qr} \right).
$$
Since $d_1^\beta \leq d^\beta$, we get $r_1<\frac{d^\beta_1}{d^\beta}r \leq r$.
Hence $n \geq pr \geq pr_1$.
Therefore 
$\frac{d_1^\beta}{r_1}n  \geq \frac{a^\beta}{d^\beta}$.
If $\epsilon=1$, then 
$$
r_1 d_1^\beta \frac{d_1^\beta}{r_1}n \geq
n (d_1^\beta)^2+1 > n(d_1^\beta)^2,
$$
which is a contradiction.
Therefore (2) holds.

\end{proof}

\begin{lem}\label{lem:r_1>0}
Let $v=(r,dH,a)$ be a Mukai vector such that
$r,a^\beta \geq 0$, $d^\beta>0$.
Assume that $v_1=(r_1,d_1 H,a_1) \in D_v^\beta$ satisfies
$r_1 d^\beta-r d_1^\beta>0$.
We set $k:=a_1^\beta d^\beta-a^\beta d_1^\beta>0$.
Then the following claims hold:
\begin{enumerate}
\item[(1)]
\begin{equation}\label{eq:r_1>0}
\langle v,v_1 \rangle = \frac{1}{r_1 d_1}
\left(
(r_1 d^\beta-r d_1^\beta)(n(d_1^\beta)^2-\epsilon)-(2rd_1^\beta \epsilon -k r_1^2) \right).
\end{equation}
\item[(2)]
If $\langle v_1^2 \rangle=0$ and $n > p^2$, then
$\langle v,v_1 \rangle \geq 3$.
\item[(3)]
Assume that $p=1$ and $n=1$.
If $\langle v_1^2 \rangle=0$, then
$\langle v,v_1 \rangle \geq 3$ unless $v=(r,(r+1)H,r)$ and $v_1=(1,H,1)$.  
\end{enumerate}
\end{lem}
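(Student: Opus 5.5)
The plan is to compute everything in the twisted coordinates $v=e^\beta(r,d^\beta H,a^\beta)$, $v_1=e^\beta(r_1,d_1^\beta H,a_1^\beta)$. Since multiplication by $e^\beta$ preserves the Mukai pairing, we have
\begin{equation*}
\langle v,v_1\rangle=2nd^\beta d_1^\beta-r_1a^\beta-ra_1^\beta .
\end{equation*}
For (1), I will use two relations. The isotropy/$(-2)$ condition reads $\langle v_1^2\rangle=2n(d_1^\beta)^2-2r_1a_1^\beta=-2\epsilon$, that is, $r_1a_1^\beta=n(d_1^\beta)^2+\epsilon$. The definition of $k$ gives $a^\beta d_1^\beta=a_1^\beta d^\beta-k$. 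Note first that $r_1d^\beta>rd_1^\beta\ge 0$ forces $r_1>0$, and $d_1^\beta>0$, so we may multiply $\langle v,v_1\rangle$ by $r_1d_1^\beta$. This gives
\begin{equation*}
2nd^\beta (d_1^\beta)^2r_1-r_1^2a_1^\beta d^\beta+kr_1^2-rd_1^\beta r_1a_1^\beta .
\end{equation*}
Substituting $r_1a_1^\beta=n(d_1^\beta)^2+\epsilon$ in the two middle terms, this becomes
\begin{equation*}
r_1d^\beta(n(d_1^\beta)^2-\epsilon)-rd_1^\beta(n(d_1^\beta)^2+\epsilon)+kr_1^2 .
\end{equation*}
Regrouping yields \eqref{eq:r_1>0}, where the denominator is read as $r_1d_1^\beta$.

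For (2), take $\epsilon=0$, so that
\begin{equation*}
\langle v,v_1\rangle=(r_1d^\beta-rd_1^\beta)\frac{nd_1^\beta}{r_1}+\frac{kr_1}{d_1^\beta}.
\end{equation*}
The key input is integrality. Since $d_1^\beta r-d^\beta r_1\in{\Bbb Z}$ and it is nonzero, we get $r_1d^\beta-rd_1^\beta\ge 1$. Since $k\in\frac1{p^2}{\Bbb Z}$ and $k>0$, we get $k\ge 1/p^2$. Hence
\begin{equation*}
\langle v,v_1\rangle\ge \frac{nd_1^\beta}{r_1}+\frac{r_1}{p^2d_1^\beta}\ge 2\sqrt{n/p^2}>2
\end{equation*}
by AM--GM and $n>p^2$. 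Because $\langle v,v_1\rangle$ is an integer, it is at least $3$.

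For (3), the same estimate with $p=n=1$ gives
\begin{equation*}
\langle v,v_1\rangle\ge \frac{d_1^\beta}{r_1}+\frac{r_1}{d_1^\beta}\ge 2 .
\end{equation*}
So the main work is to analyze when the value is exactly $2$, which I expect to be the delicate step. Equality forces three things: $r_1d^\beta-rd_1^\beta=1$, $k=1$, and $d_1^\beta=r_1$. Then $r_1a_1^\beta=(d_1^\beta)^2$ gives $a_1^\beta=r_1$. Substituting into $r_1(d^\beta-r)=1$ gives $r_1=1$, so $v_1=e^\beta(1,H,1)$ and $d^\beta=r+1$. Finally, $k=a_1^\beta d^\beta-a^\beta d_1^\beta=d^\beta-a^\beta=1$ gives $a^\beta=r$. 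Hence $v=e^\beta(r,(r+1)H,r)$, which is the stated exception in the normalization $\beta=0$, the twist being harmless since $p=1$ means $\beta$ is integral. In all other cases $\langle v,v_1\rangle\ge 3$.
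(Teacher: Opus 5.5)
Your proposal is correct and follows essentially the same route as the paper's proof: the direct substitution of $r_1a_1^\beta=n(d_1^\beta)^2+\epsilon$ and $a^\beta d_1^\beta=a_1^\beta d^\beta-k$ for (1), the AM--GM bound using $r_1d^\beta-rd_1^\beta\geq 1$ and $k\geq 1/p^2$ for (2), and the equality analysis forcing $r_1d^\beta-rd_1^\beta=1$, $k=1$, $d_1^\beta=r_1$ for (3). You are also right that the denominator in \eqref{eq:r_1>0} should read $r_1d_1^\beta$, and that the exceptional vectors in (3) are meant up to the integral twist $e^\beta$ (the paper's proof silently takes $\beta=0$).
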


\begin{proof}
(1)
We note that $r_1>0$. Then we see that 
\begin{equation}
\begin{split}
\langle v,v_1 \rangle =& 2nd^\beta d_1^\beta-r_1 a^\beta-r a_1^\beta \\
= & 2nd^\beta d_1^\beta-(a_1^\beta d^\beta-k)\frac{r_1}{d_1^\beta}-ra_1^\beta\\
=&\frac{d^\beta}{d_1^\beta}(n(d_1^\beta)^2-r_1 a_1^\beta)-ra_1^\beta +\frac{r_1 k}{d_1^\beta}
+nd^\beta d_1^\beta \\
=&-\frac{d^\beta}{d_1^\beta}\epsilon-\frac{r}{r_1}((d_1^\beta)^2 n+\epsilon)+
\frac{r_1 k}{d_1^\beta}+nd^\beta d_1^\beta \\
=& \frac{d^\beta}{d_1^\beta}(n(d_1^\beta)^2-\epsilon)-
\frac{1}{r_1 d_1^\beta}(rn (d_1^\beta)^3+rd_1^\beta \epsilon-k r_1^2)\\
%=&(nd_1^2-\epsilon)\frac{1}{d_1 r_1}
%\left(r_1 d-r d_1-\frac{2r d_1 \epsilon-r_1^2}{nd_1^2-\epsilon}\right).
=& \frac{1}{r_1 d_1^\beta}\left(
(r_1 d^\beta-r d_1^\beta)(n(d^\beta_1)^2-\epsilon)-(2rd_1^\beta \epsilon -k r_1^2) \right).
\end{split}
\end{equation}

(2)
Note that $k \geq 1/p^2$.
If $\epsilon=0$ and $n>p^2$, then
\begin{equation}\label{eq:e=0}
\begin{split}
\langle v,v_1 \rangle =  &
(r_1 d^\beta-r d_1^\beta)
n\frac{d_1^\beta}{r_1}+k\frac{r_1}{d_1^\beta}
\geq 2\sqrt{kn(r_1 d^\beta-r d_1^\beta)} > 2.
%\left(r_1 d-r d_1+\frac{r_1^2}{nd_1^2}\right) \\
%\geq &
%\frac{nd_1^2}{d_1 r_1}2 \sqrt{(r_1 d-r d_1)\frac{r_1^2}{nd_1^2}} \\
%= & 2\sqrt{n(r_1 d-r d_1)} \geq 2.
\end{split}
\end{equation}

(3)
If $\langle v,v_1 \rangle=2$, then by the proof of (2), we see that
$r_1 d-rd_1=1$, $k=a_1 d-ad_1=1$ and $d_1=r_1$.
Then we have $r_1=d_1=1$, which implies $a_1=1$.
Hence we get $d=r+1=a+1$. Therefore $v=(r,(r+1)H,r)$ and $v_1=(1,H,1)$.  
\end{proof}

\begin{NB}
If $p \mid n$, then $pk \in {\Bbb Z}$. 
\end{NB}

\begin{lem}\label{lem:n>2p^2}
Let $v_1=(r_1,d_1 H,a_1) \in D_v^\beta$ be a Mukai vector satisfying
$r_1 d^\beta-r d^\beta_1>0$ and $\langle v_1^2 \rangle=-2$. 
Assume that $n \geq 2p^2$ and $\langle v,v_1 \rangle<0$.
Then $r_1<r$ and $d_1^\beta<\frac{2r}{p^2}$.
\end{lem}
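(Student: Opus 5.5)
The plan is to use the explicit formula \eqref{eq:r_1>0} of Lemma \ref{lem:r_1>0} (1) with $\epsilon=1$, rewrite the hypothesis $\langle v,v_1\rangle<0$ as one polynomial inequality in $d_1^\beta$ and $r_1$, and then bound the terms from below using integrality and $n \geq 2p^2$.

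First, $r_1>0$. Since $v_1 \in D_v^\beta$ we have $d^\beta \geq d_1^\beta>0$, and $r \geq 0$. So $r_1 d^\beta > r d_1^\beta \geq 0$, which gives $r_1>0$ and $r_1 d_1^\beta>0$. Multiplying \eqref{eq:r_1>0} by $r_1 d_1^\beta$, the condition $\langle v,v_1\rangle<0$ becomes
$$
(r_1 d^\beta-r d_1^\beta)\bigl(n(d_1^\beta)^2-1\bigr)+k r_1^2 < 2 r d_1^\beta .
$$

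Next I bound the left-hand side from below using the integrality facts recorded before the definition of $D_v^\beta$.
\begin{itemize}
\item $r_1 d^\beta - r d_1^\beta$ is a positive integer, hence $\geq 1$.
\item $k = a_1^\beta d^\beta - a^\beta d_1^\beta \in \frac{1}{p^2}{\Bbb Z}_{>0}$, hence $k \geq 1/p^2$.
\item $p d_1^\beta$ is a positive integer, hence $p^2 (d_1^\beta)^2 \geq 1$.
\end{itemize}
With $n \geq 2p^2$ the last point gives $n(d_1^\beta)^2 - 1 \geq 2p^2(d_1^\beta)^2 - 1 \geq p^2 (d_1^\beta)^2 > 0$. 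In particular the first product is nonnegative, so it may be replaced by its lower bound. Substituting all three bounds yields the key inequality
$$
p^2 (d_1^\beta)^2 + \frac{r_1^2}{p^2} < 2 r d_1^\beta .
$$

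Both conclusions follow from this. Dropping the positive term $r_1^2/p^2$ gives $p^2 (d_1^\beta)^2 < 2 r d_1^\beta$, and dividing by $p^2 d_1^\beta>0$ gives $d_1^\beta < 2r/p^2$. Applying AM--GM to the left-hand side gives $p^2 (d_1^\beta)^2 + r_1^2/p^2 \geq 2 r_1 d_1^\beta$. Hence $2 r_1 d_1^\beta < 2 r d_1^\beta$, that is, $r_1 < r$.

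The main point needing care is the integrality step. I must confirm that $r_1 d^\beta - r d_1^\beta \in {\Bbb Z}$ (stated in the paper), that $k \geq 1/p^2$ (used already in Lemma \ref{lem:r_1>0} (2)), and that $n(d_1^\beta)^2 - 1 > 0$, so that the first product can legitimately be bounded below by $p^2 (d_1^\beta)^2$. The rest is routine.
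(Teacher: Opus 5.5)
Your proof is correct and takes essentially the same route as the paper: both clear the denominator in \eqref{eq:r_1>0} with $\epsilon=1$ and use $r_1d^\beta-rd_1^\beta\ge 1$, $(n-p^2)(d_1^\beta)^2\ge (pd_1^\beta)^2\ge 1$ and $k\ge 1/p^2$ to reduce everything to the negativity of $(pd_1^\beta)^2-2rd_1^\beta+kr_1^2$. The only differences are cosmetic: the paper argues by contraposition and completes the square as $(pd_1^\beta-r/p)^2-r^2/p^2+kr_1^2$, whereas you argue directly and use AM--GM, which amounts to $(pd_1^\beta-r_1/p)^2\ge 0$.
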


\begin{proof}
By $n \geq 2p^2$ and $pd_1^\beta \in {\Bbb Z}$,
$(n-p^2)(d_1^\beta)^2\geq (p d_1^\beta)^2 \geq 1$.
Hence 
\begin{equation}
\begin{split}
& (r_1 d-r d_1)(n (d_1^\beta)^2-1)-(2rd_1^\beta-kr_1^2)\\
\geq & n(d_1^\beta)^2-1-2r d_1^\beta+k r_1^2 \\
\geq & (pd_1^\beta)^2-2r d_1^\beta+k r_1^2 \\
=&
\left(pd_1^\beta-\tfrac{r}{p} \right)^2-\frac{r^2}{p^2}+k r_1^2
\end{split}
\end{equation}
If $r_1 \geq r$, then $kr_1^2 \geq \frac{r^2}{p^2}$.
If $d_1^\beta \geq \frac{2r}{p^2}$, then
$(pd_1^\beta)^2-2r d_1^\beta=d_1^\beta(p^2 d_1^\beta-2r) \geq 0$.
Therefore the claim holds.
\end{proof}

\subsection{Preservation of Gieseker stability.}\label{subsect:FM-preserve}

For $r_0$ in \eqref{eq:u0}, let
$X':=M_H(r_0 e^{-\beta})$ be the moduli space of stable sheaves
$E$ with $v(E)=r_0 e^{-\beta}$.
Assume that $X'$ is a K3 surface (i.e., projective) and there is a
a coherent sheaf ${\cal E}$ on $X \times X'$ which
is a universal family of stable sheaves.
Let us consider the Fourier-Mukai
transform $\Phi_{X \to X'}^{{\cal E}}:{\bf D}(X) \to {\bf D}(X')$ defined by
$$
\Phi_{X \to X'}^{{\cal E}}(E):=
{\bf R}p_{X' *}(p_X^*(E) \otimes {\cal E}),
$$
where $p_X:X \times X' \to X$ and $p_{X'}:X \times X' \to X'$ are projections.
We shall study the condition under which $D_{X'} \circ \Phi_{X \to X'}^{{\cal E}}$
preserves Gieseker stabilility for a general member of ${\cal M}_H(v)$.
%By the action of  on ${\Bbb H}$,
%$U_-^\beta$ is maped to $U_+^{\beta'}$,
By \cite[Prop. 2.13, Prop. 2.16]{FM-duality}, we have the following result.
\begin{lem}\label{lem:stability-cond}
If $(s,t) \in U_+^\beta (\nu)$, then
${\cal E}_{| X \times \{x' \}}^{\vee}$ is $\sigma_{(sH,tH)}$-stable for all $x' \in X'$.  
\end{lem}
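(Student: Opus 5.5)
The plan is to reduce the claim, by derived duality, to the stability of the sheaves ${\cal E}_{|X \times \{x'\}}$ themselves. Then I would exclude walls for the isotropic vector $r_0 e^{-\beta}$ in the mirror image of $U_+^\beta(\nu)$ by a numerical argument in the spirit of Lemmas \ref{lem:r_1<0}--\ref{lem:n>2p^2}.

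\textbf{Step 1 (duality).} Put $E:={\cal E}_{|X\times\{x'\}}$. It is a Gieseker stable sheaf with $v(E)=r_0e^{-\beta}$, so $v(E^\vee)=r_0e^{\beta}=u$. Since $\Pic(X)={\Bbb Z}H$ and $d^{-\beta}(E)=0$, the sheaf $E$ is $\mu$-semistable of slope $-\frac{q}{p}$. The dual $E^\vee$ has ${\cal H}^0(E^\vee)=E^{*}$ and ${\cal H}^1(E^\vee)={\cal E}xt^1(E,{\cal O}_X)$, which is $0$-dimensional. For $s>\frac{q}{p}$, $E^*$ lies in ${\cal F}_{(sH,tH)}$, so $E^\vee[1]\in{\cal A}_{(sH,tH)}$. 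I would use the standard compatibility of $D_X$ with geometric stability conditions: up to a shift and the $\widetilde{\GL}_2^+({\Bbb R})$-action, $D_X$ sends $\sigma_{(sH,tH)}$-(semi)stable objects to $\sigma_{(-sH,tH)}$-(semi)stable ones. This reduces the lemma to the following claim: $E$ is $\sigma_{(s'H,tH)}$-stable for $(s',t)$ with $0<-\frac{q}{p}-s'<\nu$ and $t$ as in the definition of $U^\beta_\pm$, i.e. $(s',t)\in U_-^{-\beta}(\nu)$. In this region $E\in{\cal T}_{(s'H,tH)}$, so $E$ lies in the heart.

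\textbf{Step 2 (large $t$).} For $t\gg0$ at fixed $s'$, $\sigma_{(s'H,tH)}$ is in the Gieseker chamber for $r_0e^{-\beta}$ (\cite[Prop. 14.2]{Br:3}), so $E$ is stable there. By Lemma \ref{lem:wall} (2), (3), stability can only change when $(s',t)$ crosses a wall $W_{v_1}'$ for $v=r_0e^{-\beta}$. Lemma \ref{lem:stability} lets me move vertically. Hence it suffices to show that no wall $W_{v_1}$ meets $U_-^{-\beta}(\nu)$.

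\textbf{Step 3 (no walls, the main obstacle).} Suppose $E$ is destabilized along $W_{v_1}$ by a stable subobject $F$ with $v(F)=v_1=e^{-\beta}(r_1,d_1^{-\beta}H,a_1^{-\beta})$. Then:
\begin{itemize}
\item[(i)] $\langle v_1^2\rangle\geq -2$, since $F$ is stable;
\item[(ii)] $0\le d_1^{-\beta}-r_1(s'+\frac{q}{p})$ and similarly for the quotient, which forces $0\leq d_1^{-\beta}\lesssim r_0\,|s'+\frac{q}{p}|$ up to the contribution of $r_1$;
\item[(iii)] $pd_1^{-\beta}\in{\Bbb Z}$ and $p^2a_1^{-\beta}\in{\Bbb Z}$.
\end{itemize}
Since $\langle u^2\rangle=0$, the wall through the point is a semicircle centred on the line $s=-\frac{q}{p}$, and its radius is controlled by $\langle v_1^2\rangle+2\geq0$ and $\langle v_1,u\rangle$. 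I would compute $Z_{(s'H,tH)}(u)=-\frac{r_0 n}{1}\bigl((s'+\tfrac{q}{p})+\sqrt{-1}t\bigr)^2$ (up to sign). The condition that $Z(v_1)$ and $Z(u)$ be ${\Bbb R}$-proportional then gives an explicit circle. The bounds $|s'+\frac{q}{p}|<\min\{\frac{1}{r_0}\sqrt{\frac{1}{4n}},\frac{1}{2r_0^2}\}$ and $t>\frac{1}{r_0}\sqrt{\frac1{4n}}-\sqrt{\frac{1}{4r_0^2n}-(s'+\frac qp)^2}$ are exactly those that keep this circle from entering the region. The integrality in (iii) forces $d_1^{-\beta}=0$ or $d_1^{-\beta}\ge\frac1p$. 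When $d_1^{-\beta}=0$, the object $F$ has phase equal to that of $u$ only in the limit, and is excluded by stability of $E$ (the $r_0$-bound handles the $\frac1{2r_0^2}$ constraint). When $d_1^{-\beta}\ge\frac1p$, the subobject is too steep to lie in the region. This estimate, especially ruling out spherical $v_1$ ($\langle v_1^2\rangle=-2$) near the boundary point $(-\frac qp,0)$, is where I expect the real work to be. It is the content of \cite[Prop. 2.11, 2.13, 2.16]{FM-duality}, and I would follow those computations.
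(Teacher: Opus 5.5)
Your Steps 1 and 2 are sound. $D_X$ carries $\sigma_{(sH,tH)}$ to $\sigma_{(-sH,tH)}$ modulo $\widetilde{\GL}_2^+({\Bbb R})$: the dual stability condition has all ${\Bbb C}_x$ stable of one phase and central charge $Z_{(-sH,tH)}$. So the lemma is equivalent to $\sigma_{(s'H,tH)}$-stability of $E={\cal E}_{|X\times\{x'\}}$ on $U_-^{-\beta}(\nu)$, and it suffices to exclude actual walls for $u'=r_0e^{-\beta}$ there. The paper itself gives no argument beyond citing \cite[Prop. 2.13, Prop. 2.16]{FM-duality}, so everything rests on your Step 3, and the geometric picture there is wrong.

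Put $x=s'+\frac{q}{p}<0$, $z=x+\sqrt{-1}t$ and $v_1=e^{-\beta}(r_1,d_1H,a_1)$. Then $Z(u')=-nr_0z^2$ and $Z(v_1)=-nr_1z^2+2nd_1z-a_1$, so for $d_1\neq 0$ the numerical wall $W_{v_1}$ is $x^2+t^2=\frac{a_1}{nd_1}x$. This is a semicircle through $(-\frac{q}{p},0)$ that is tangent there to the line $s=-\frac{q}{p}$; it is not centred on that line. Near the tangency point it has height of order $\sqrt{|x|}$, while the lower boundary of $U_-^{-\beta}$ has height of order $x^2$. Hence every numerical wall with $a_1/d_1<0$ enters $U_-^{-\beta}(\nu)$ for every $\nu$, and no choice of bounds keeps these circles out. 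The bounds in $U_\pm^\beta$ only serve to keep the region inside ${\Bbb H}^0$ (Proposition \ref{prop:stability-cond}).

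Your case split is also incomplete. At a single point, (ii) reads $0\le d_1-r_1x\le r_0|x|$, which allows $d_1<0$ when $r_1>0$. Moreover, ``too steep'' is not an argument.

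The missing idea is to slide along the wall to its tangency point. Suppose $0\to F\to E\to G\to 0$ in ${\cal P}(\phi)$ destabilizes $E$ at $P=(x_P,t_P)\in U_-^{-\beta}(\nu)$, with $v(F)=v_1\notin{\Bbb Q}u'$.

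1. Compare $W_{v_1}$ with the circle $x^2+t^2=t/(r_0\sqrt{n})$ bounding $U$ from below. They meet only at the origin and at one further point, and this shows that the part of $W_{v_1}$ lying over the interval $(x_P,0)$ stays in $U_-^{-\beta}(\nu)\subset{\Bbb H}^0$.
2. By Lemma \ref{lem:wall} (2), $F$ and $G$ remain semistable of the same phase as $E$ along this arc.
3. That phase lies in $(0,1)$, because $E\in{\cal T}_{(s'H,tH)}$ and $\mathrm{Im}Z(E)=2ntr_0|x|>0$.
4. Hence $0\le \mathrm{Im}Z(F)\le \mathrm{Im}Z(E)$, i.e. $0\le d_1-r_1x\le r_0|x|$, at points with $x\to 0^-$. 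This forces $d_1=0$.
5. But then $Z(v_1)/Z(u')=\frac{r_1}{r_0}+\frac{a_1}{nr_0z^2}$ is real at a point with $xt\neq 0$ only if $a_1=0$, i.e. $v_1\in{\Bbb Q}u'$, a contradiction.

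Thus $U_-^{-\beta}(\nu)$ meets no wall for $u'$. Combined with your Steps 1--2 and the primitivity of $u'$, this proves the lemma without appealing to \cite{FM-duality} for the key step.
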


We set 
$$
s_0=\frac{q}{p},\quad
\beta' =c_1({\cal E}_{|\{ x \} \times X'}^{\vee})/r_0,\quad
\beta'=s_0' H',
$$
 where $\NS(X')={\Bbb Z}H'$.
Then
$$
v({\cal E}_{|\{ x \} \times X'})=r_0 e^{-\beta'}.
$$
For $(s,t) \in U_+^\beta$, we define
$(s',t') \in {\Bbb H}$ by
\begin{equation}
s':=s_0'+\frac{(s-s_0)}{nr_0((s-s_0)^2+t^2)},\;
t':=\frac{t}{nr_0((s-s_0)^2+t^2)}
\end{equation}
(see \cite[2.1]{FM-duality}). 
Then Proposition \ref{prop:stability-cond} and Lemma \ref{lem:stability-cond} imply that
$$
\sigma_{(-s'H',t'H')} \equiv \Phi_{X \to X'}^{{\cal E}}(\sigma_{(sH,tH)})
$$
modulo the action of $\widetilde{GL}_2^+({\Bbb R})$ on $\Stabd(X)$ (see \cite[Prop. 2.2]{FM-duality}).
By $D_{X'} \circ \Phi_{X \to X'}^{{\cal E}}$, we have an isomorphism
$$
{\cal M}_{(sH,tH)}(v) \simto {\cal M}_{(s'H',t'H')}(v').
$$
Let ${\cal C}_0^\beta \subset U_+^\beta$ be a chamber for $v$
whose closure contains $(s_0,0)$
and ${\cal C}_0^{\beta'} \subset U_+^{\beta'}$ be a chamber for $v'$
whose closure contains $(s_0',0)$.
We note that 
\begin{equation}
\begin{split}
t \gg 0 & \Longleftrightarrow (s',t' ) \in {\cal C}_0^{\beta'},\\
t' \gg 0 & \Longleftrightarrow (s,t) \in {\cal C}_0^\beta.
\end{split}
\end{equation}
%$t$ is sufficiently large implies $(s',t' ) \in {\cal C}_0^{\beta'}$
%and
%$(s,t) \in {\cal C}_0^\beta$ implies $t'$ is sufficiently large.
Thus we have isomorphisms
\begin{equation}
\begin{split}
& {\cal M}_H(v) 
%\to 
\simto  {\cal M}_{(s'H',t'H')}(v'),\quad   ((s',t') \in {\cal C}_0^{\beta'})  \\
& {\cal M}_{(sH,tH)}(v) 
\simto
{\cal M}_{H'}(v'),\quad   ((s,t) \in {\cal C}_0^\beta),
\end{split}
\end{equation}
where $v=(r,dH,a)$ with $d^\beta>0$ and $r,a^\beta \geq 0$.
\begin{NB}
Since $v=e^\beta(r,d^\beta H,a^\beta)$ and $v'=e^{\beta'}(r_0 a^\beta,d^\beta H',r/r_0)$,
${\cal M}_{(s'H',t'H')}(v')$ consists of Gieseker stable sheaves if 
$(s,t) \in {\cal C}_0^\beta$.
\end{NB}
To be more precise, let us describe ${\cal C}_0^\beta$ in detail. 
We note that $E, {\cal E}_{|X \times \{x' \} }^{\vee}[1] \in {\cal A}_{(sH,tH)}$ for
$s_0<s<\frac{d}{r}$, where $E \in {\cal M}_{(sH,tH)}(v)$.
Let $C$ be a semi-circle defined by
\begin{equation}
t^2+(s-s_0)\left(s-s_0-\frac{2a^\beta}{d^\beta (H^2)} \right)=0.
\end{equation}
Then $\phi_{(sH,tH)}({\cal E}_{|X \times \{x' \} }^{\vee}[1])=\phi_{(sH,tH)}(E)$
for $(s,t) \in C$
(see the proof of \cite[Prop. 5.6, Prop. 5.9]{YY2}).
\begin{NB}
The expression also works for $r=0$.
If $\langle v^2 \rangle=-2$, then
$C$ passes $(\frac{d^\beta}{r},\sqrt{\frac{2}{r_1^2 (H^2)}})$.
\end{NB}
If $(s,t)$ satisfies 
$$
t>\sqrt{(s-s_0)\left(\frac{2a^\beta}{d^\beta (H^2)}+s_0-s \right)},\;
1 \gg s-s_0>0,
$$
 then
$\phi_{(sH,tH)}({\cal E}_{| X \times \{x' \}}^{\vee}[1])>\phi_{(sH,tH)}(E)$.
Let ${\cal C}$ be a chamber whose closure contains
$C$ and $\phi_{(sH,tH)}({\cal E}_{|X \times \{x' \}}^{\vee}[1])>\phi_{(sH,tH)}(E)$ for
$(s,t) \in {\cal C}$.
Then ${\cal C}={\cal C}_0^\beta$.
Thus we get the following.

\begin{NB}
$C$ is transformed to the virtical line $s'-s_0'=\frac{d^\beta}{r_0 a^\beta}$.
\end{NB}

\begin{prop}[{\cite[Thm. 4.9]{MYY:2011:2}}]\label{prop:isom}
Assume that $(s,t) \in {\cal C}$. Then
we have an isomorphism
$$
{\cal M}_{(sH,tH)}(v) \cong {\cal M}_{H'}(v')
$$
by sending $E \in {\cal M}_{(sH,tH)}(v)$ to
$\Phi_{X \to X'}^{{\cal E}}(E)^{\vee}$, where
$v'=\Phi_{X \to X'}^{\cal E}(v)^{\vee}$.
\end{prop}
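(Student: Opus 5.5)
The plan is to reduce Proposition \ref{prop:isom} to the two facts already established: the transformation of stability conditions $\sigma_{(sH,tH)}\mapsto\sigma_{(s'H',t'H')}$ under $D_{X'}\circ\Phi_{X\to X'}^{\cal E}$ (up to the $\widetilde{\GL}_2^+({\Bbb R})$-action), and the identification of the Gieseker chamber for $v'$ on $X'$ with the region $t'\gg0$. For any $(s,t)\in U_+^\beta$, Lemma \ref{lem:stability-cond} and Proposition \ref{prop:stability-cond} give that $D_{X'}\circ\Phi_{X\to X'}^{\cal E}$ carries $\sigma_{(sH,tH)}$-semistable objects of class $v$ isomorphically to $\sigma_{(s'H',t'H')}$-semistable objects of class $v'$, up to a shift, so that ${\cal M}_{(sH,tH)}(v)\simto{\cal M}_{(s'H',t'H')}(v')$ as stacks. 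We first fix the shift. Since $E$ and ${\cal E}_{|X\times\{x'\}}^\vee[1]$ lie in ${\cal A}_{(sH,tH)}$, and on ${\cal C}$ the inequality $\phi({\cal E}^\vee_{|X\times\{x'\}}[1])>\phi(E)$ holds between stable objects, we get $\Hom(E,{\cal E}^\vee_{|X\times\{x'\}}[1][k])=0$ for all $k<0$ and also for $k=0$. By Serre duality the same vanishing gives the bound for $k\geq2$. Hence $\Phi(E)$ is concentrated in a single degree, and after dualizing and shifting we get an honest object of the heart whose class is $v'$.

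The main step is to show that $(s,t)\in{\cal C}$ corresponds to $(s',t')$ lying in the Gieseker chamber for $v'$ on $X'$. We compute where the semicircle $C$ goes under the Möbius-type map $(s,t)\mapsto(s',t')$. The map is inversion centered at $(s_0,0)$, and $C$ passes through $(s_0,0)$. So $C$ is sent to a vertical line, which should be $s'-s_0'=\frac{d^\beta}{r_0a^\beta}$, the line where $\rk$-type slope of $v'=e^{\beta'}(r_0a^\beta,d^\beta H',r/r_0)$ equals $s'$. The region of ${\cal C}$ adjacent to $C$ on the side where ${\cal E}^\vee[1]$ has larger phase is sent to the region to the left of this line with $t'$ large.

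Next we must check that no wall for $v'$ separates this image from the large-volume limit $t'\to\infty$. Because ${\cal C}$ is a single chamber for $v$, its image is a single chamber for $v'$, and we only need it to touch $t'\gg0$. The equivalence $t'\gg0\Leftrightarrow(s,t)\in{\cal C}_0^\beta$ already noted, together with ${\cal C}={\cal C}_0^\beta$, supplies this. By Bridgeland's large volume limit \cite[Prop. 14.2]{Br:3}, $\sigma_{(s'H',t'H')}$-semistability for $t'\gg0$ with $s'<d'/r'$ is exactly $H'$-Gieseker semistability of sheaves. This needs $r'=r_0a^\beta>0$, with the $a^\beta=0$ torsion case handled the same way.

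I expect the obstacle to be precisely the identification ${\cal C}={\cal C}_0^\beta$, namely that no wall lies between $C$ and the boundary point $(s_0,0)$ in the relevant region. The route is to use Lemma \ref{lem:wall}(2): walls are nested semicircles, or semicircles through fixed points, and any wall meeting ${\cal C}$ near $C$ would force a destabilizing class whose image on the $X'$ side gives a wall for $v'$ at arbitrarily large $t'$, contradicting local finiteness of walls. After this identification, we compose with $D_{X'}$ and the shift to obtain the stated isomorphism $E\mapsto\Phi_{X\to X'}^{\cal E}(E)^\vee$.
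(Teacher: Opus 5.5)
Your overall route is the paper's. You transport $\sigma_{(sH,tH)}$ to $\sigma_{(s'H',t'H')}$, send $C$ to the vertical line $s'-s_0'=d^\beta/(r_0a^\beta)$ with the outside of $C$ going to its left, and recognize ${\cal C}={\cal C}_0^\beta$ as the preimage of the Gieseker chamber for $v'$. The paper says no more than this and refers to \cite[Thm. 4.9]{MYY:2011:2}.

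However, your shift-fixing paragraph is wrong. For semistable objects, $\phi(A)>\phi(B)$ forces $\Hom(A,B)=0$. So on ${\cal C}$ the inequality $\phi({\cal E}^{\vee}_{|X\times\{x'\}}[1])>\phi(E)$ gives $\Hom({\cal E}^{\vee}_{|X\times\{x'\}}[1],E)=0$, not $\Hom(E,{\cal E}^{\vee}_{|X\times\{x'\}}[1])=0$. Your Serre duality step for $k=2$ in fact needs the former, since $\Hom(E,F[2])\cong\Hom(F,E)^{*}$. With the correct vanishing, $\Hom(E,{\cal E}^{\vee}_{|X\times\{x'\}}[1][k])$ is forced to vanish only for $k\neq 0,1$, and the $k=0$ group is nonzero in general. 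To see this, note that by Grothendieck duality $\Phi_{X\to X'}^{\cal E}(E)^{\vee}\cong\Phi_{X\to X'}^{{\cal E}^{\vee}}(E^{\vee})[2]$. By base change its derived fibre at $x'$ is ${\bf R}\Hom(E,{\cal E}^{\vee}_{|X\times\{x'\}})[2]$, and the $k=0$ group is its degree $-1$ cohomology. That group is nonzero exactly where the sheaf $\Phi_{X\to X'}^{\cal E}(E)^{\vee}$ fails to be locally free.

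So your conclusion that $\Phi_{X\to X'}^{\cal E}(E)$ sits in a single degree would make every member of ${\cal M}_{H'}(v')$ locally free. That fails whenever ${\cal M}_{H'}(v')$ contains non-locally-free sheaves, e.g.\ ideal sheaves of points. What is true is that $\Phi_{X\to X'}^{\cal E}(E)$ has perfect amplitude $[0,1]$. The step is unnecessary and should be dropped. The shift is already fixed by the identification of stability conditions: track one object, e.g.\ $D_{X'}\circ\Phi_{X\to X'}^{\cal E}$ sends ${\cal E}^{\vee}_{|X\times\{x'\}}[1]$ to ${\Bbb C}_{x'}[-1]$. The large volume limit then identifies the semistable objects of class $v'$ in the right phase with Gieseker semistable sheaves directly, with no a priori control of the cohomology sheaves of $\Phi_{X\to X'}^{\cal E}(E)$.

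Your justification of ${\cal C}={\cal C}_0^\beta$ also does not close as written. Walls for $v'$ at arbitrarily large $t'$ do not by themselves contradict local finiteness. Equivalently, walls for $v$ accumulating near the boundary point $(s_0,0)$, which is not in $\Stab(X)$, are not excluded by it, since local finiteness only controls neighbourhoods of points of $\Stabd$. Also, Lemma \ref{lem:wall}(2) is about constancy of semistability along a wall, not about nesting. The missing input is the geometry of numerical walls: for fixed $v$ they form a nested family of disjoint semicircles containing $C$. Hence actual walls accumulating on $C$ near $(s_0,0)$ would accumulate at interior points of $C\cap U_+^\beta$, which do lie in $\Stabd(X)$. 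Only there does local finiteness give a contradiction; on the $X'$ side, this says walls of unbounded radius would accumulate on the vertical line at bounded height.

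This shows that the chamber just outside $C$ near $(s_0,0)$ is well defined. It maps to the chamber adjacent to the vertical line on its left at large $t'$, and \cite[Prop. 14.2]{Br:3} identifies that chamber with the Gieseker chamber. That is all the content of ${\cal C}={\cal C}_0^\beta$ in the paper. It also specifies which of the two chambers touching $(s_0,0)$ is meant: the one inside $C$ goes to the right of the vertical line and does not give ${\cal M}_{H'}(v')$.
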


\begin{prop}\label{prop:no-wall}
The following conditions are equivalent. 
\begin{enumerate}
\item[(1)]
$\Phi_{X \to X'}^{{\cal E}}(E)^{\vee} \in {\cal M}_{H'}(v')$
for a general $E \in {\cal M}_H(r,dH,a)$.
\item[(2)]
$\langle v,v_1 \rangle \geq 0$
for $v_1=(r_1,d_1 H,a_1) \in D_v^\beta$ with $\langle v_1^2 \rangle=-2$
and $r_1 d-r d_1>0$.  
\end{enumerate}
Under the condition, we have a birational map
\begin{equation}
\begin{matrix}
{\cal M}_H(r,dH,a) & \cdots \to & {\cal M}_{H'}(v')\\
E & \mapsto & \Phi_{X \to X'}^{{\cal E}}(E)^{\vee} 
\end{matrix}
\end{equation}
\end{prop}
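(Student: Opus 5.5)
We plan to reduce the statement to the comparison of Gieseker stability on $X$ with $\sigma_{(sH,tH)}$-stability for $(s,t)$ near $(s_0,0)$ in $U_+^\beta$, using Proposition \ref{prop:isom}. That proposition identifies ${\cal M}_{(sH,tH)}(v)$ with ${\cal M}_{H'}(v')$ via $E \mapsto \Phi_{X\to X'}^{{\cal E}}(E)^\vee$ for $(s,t)\in{\cal C}={\cal C}_0^\beta$. Condition (1) is therefore equivalent to the following: a general $E\in{\cal M}_H(v)$ lies in ${\cal M}_{(sH,tH)}(v)$ for $(s,t)\in{\cal C}_0^\beta$. Equivalently, ${\cal M}_H(v)\cap{\cal M}_{(sH,tH)}(v)$ is open dense in ${\cal M}_H(v)$; since ${\cal M}_H(v)$ is irreducible, this holds as soon as the intersection is nonempty.

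We connect the Gieseker chamber (large $t$) to ${\cal C}_0^\beta$ by a path in $U_+^\beta(\nu)$, say a vertical segment $s=$ const with $0<s-s_0\ll 1$. By Lemma \ref{lem:stability}, it suffices to control walls along this segment. By Lemma \ref{lem:wall}, every wall crossed is a semicircle $W_{v_1}$ with $v_1$ the Mukai vector of a destabilizing factor. We then apply Proposition \ref{prop:birational}: nonemptiness of ${\cal M}_H(v)\cap{\cal M}_{(sH,tH)}(v)$ on $U_+^\beta(\nu)$ is equivalent to $\langle v,v_1\rangle\ge 0$ for $v_1\in D_v^\beta$ with $\epsilon=1$, together with $\langle v,v_1\rangle\ge 2$ for $\epsilon=0$ when $\langle v^2\rangle>0$.

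The direction (1)$\Rightarrow$(2) is immediate from this criterion, since (2) is just the $\epsilon=1$ part restricted to $r_1d-rd_1>0$. For (2)$\Rightarrow$(1), we must discard the remaining classes of $v_1$, and here we split by sign.

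\emph{Case $r_1d^\beta-rd_1^\beta<0$.} We expect Lemma \ref{lem:r_1<0} to give $\langle v,v_1\rangle\ge 2$. However, as stated it requires $n\ge p^2$ or $n \geq p(r+p)$, whereas the proposition imposes no hypothesis on $n$. The main obstacle is therefore to show that such $v_1$ never define genuine totally semistable walls between the Gieseker chamber and ${\cal C}$. We would try to do this directly: these semicircles lie on the side of the wall $C$ through which ${\cal E}^\vee_{|X\times\{x'\}}[1]$ destabilizes, so they should not be crossed before reaching ${\cal C}$. Alternatively, we can reread the proof of Lemma \ref{lem:r_1<0}: the subcase $r_1>0$ used no assumption on $n$, and the subcase $r_1\le 0$ already gives positivity.

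\emph{Case $\epsilon=0$ with $r_1d^\beta-rd_1^\beta>0$.} Here Lemma \ref{lem:r_1>0}(1) gives $\langle v,v_1\rangle\ge 2\sqrt{kn(r_1d^\beta-rd_1^\beta)}$. This lower bound must be compared with $2$; if it falls short, we would argue instead that isotropic walls only affect a codimension-one locus and do not obstruct nonemptiness.

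Finally, once the intersection is nonempty, the map $E\mapsto\Phi_{X\to X'}^{{\cal E}}(E)^\vee$ is defined on the open substack ${\cal M}_H(v)\cap{\cal M}_{(sH,tH)}(v)$ and lands in ${\cal M}_{H'}(v')$. The analogous open subset on the $X'$ side is dense, so this map gives the stated birational map.
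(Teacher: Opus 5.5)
Your architecture is the paper's: Proposition \ref{prop:isom}, a path from the Gieseker chamber down to ${\cal C}$, and then the criterion of Proposition \ref{prop:birational}. The direction (1)$\Rightarrow$(2) is fine. The gap is in (2)$\Rightarrow$(1). There you must show that the classes $v_1\in D_v^\beta$ with $r_1d^\beta-rd_1^\beta<0$, and the isotropic classes, satisfy the inequalities of Proposition \ref{prop:birational}.

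The paper closes this step only by citing Lemma \ref{lem:r_1<0} and Lemma \ref{lem:r_1>0}. It therefore tacitly works under their bounds on $n$: $n\ge p^2$, and $n\ge p(r+p)$ when $\langle v^2\rangle=-2$. These bounds hold where the proposition is applied, namely Theorem \ref{thm:n} with $n\ge(2r+p)p$.

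You rightly notice that such a bound is missing, but each of your three workarounds fails.
\begin{enumerate}
\item[(a)] The walls with $r_1d^\beta-rd_1^\beta<0$ are not avoided. Writing out ${\Bbb R}Z(v)={\Bbb R}Z(v_1)$, every $v_1\in D_v^\beta$ gives the semicircle $n(r_1d^\beta-rd_1^\beta)\bigl((s-s_0)^2+t^2\bigr)-(r_1a^\beta-ra_1^\beta)(s-s_0)-k=0$. It meets $s=s_0$ at the height $t>0$ with $nt^2=k/(r_1d^\beta-rd_1^\beta)$; this is exactly what the last inequality in the definition of $D_v^\beta$ encodes. Since $C$ passes through $(s_0,0)$, your vertical path crosses all of these walls, whatever the sign of $r_1d^\beta-rd_1^\beta$.
\item[(b)] Rereading the proof of Lemma \ref{lem:r_1<0} without a bound on $n$ does not suffice. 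The subcase $r_1\le 0$ then gives only $\langle v,v_1\rangle\ge 1$, not the $\ge 2$ that Proposition \ref{prop:birational} requires for $\epsilon=0$ when $\langle v^2\rangle>0$. The subcase $r_1>0$ uses $n\ge pr_1$ when $\langle v^2\rangle=-2$.
\item[(c)] Isotropic walls are not harmless. An isotropic $v_1$ with $\langle v,v_1\rangle=1$ gives a totally semistable wall of Hilbert--Chow type. This is precisely the $\epsilon=0$ clause of Proposition \ref{prop:birational}, which you yourself use as an equivalence, so your fallback contradicts your own criterion.
\end{enumerate}

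No argument can remove the bound on $n$. Take $n=1$ and $\beta=\frac12H$ (so $r_0=4$ and $X'=M_H(4,-2H,1)$ is fine), and let $v=(1,2H,2)$.
\begin{enumerate}
\item[$\bullet$] The isotropic class $v_1=(1,H,1)$ lies in $D_v^\beta$, since $d_1^\beta=\frac12$, $r_1d-rd_1=1$ and $k=\frac14$. It has $\langle v,v_1\rangle=1$. So the intersection in Proposition \ref{prop:birational} is empty, and (1) fails.
\item[$\bullet$] Parametrizing by $\delta=d_1^\beta\in\{\frac12,1,\frac32\}$ and imposing $k>0$, the only $(-2)$-classes in $D_v^\beta$ with $r_1d-rd_1>0$ are $(1,H,2)$ and $(5,3H,2)$. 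Both pair to $0$ with $v$, so (2) holds.
\end{enumerate}

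The correct route is to state the needed hypothesis and then discard the extra classes directly. Lemma \ref{lem:r_1<0} handles the classes with $r_1d^\beta-rd_1^\beta<0$. The estimate $\langle v,v_1\rangle\ge 2\sqrt{kn(r_1d^\beta-rd_1^\beta)}\ge 2\sqrt{n}/p$ from the proof of Lemma \ref{lem:r_1>0}(2) handles the isotropic classes with $r_1d^\beta-rd_1^\beta>0$; it is already $\ge 2$ once $n\ge p^2$.
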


\begin{proof}
By Proposition \ref{prop:stability-cond}, there is a segment 
$I(x) \in {\Bbb H}^0 (0 \leq x \leq 1)$ 
such that $I(0)=(0,t_\infty)$ is in the Gieseker chamber and
$I(1)=(s_1,t_1) \in {\cal C}$ and $0<s_1 \ll 1$. 
\begin{NB}
Let $(s(x) H,t(x) H)$ $(s(x) \geq 0)$ be a path in the space of 
geometric stability 
conditions connecting 
Gieseker chamber to ${\cal C}$.
\end{NB}
By Proposition \ref{prop:birational}, Lemma \ref{lem:r_1<0} and Lemma \ref{lem:r_1>0},
${\cal M}_H(r,dH,a) \cap {\cal M}_{(s_1 H,t_1 H)}(v) \ne \emptyset$
if and only if (2) holds.
Combining Proposition \ref{prop:isom},
we see that (1) is equivalent to (2).
\end{proof}

\begin{thm}\label{thm:n}
Let $v=(r,dH,a)$ be a Mukai vector such that
\begin{equation}\label{eq:cond-v}
r \geq 0, \; pd-rq>0,-\langle e^{\frac{q}{p}H}, v \rangle \geq 0,\;
\langle v^2 \rangle \geq -2.
\end{equation}
Assume that $n \geq (2r+p)p$.
Then
%\begin{enumerate}
%\item[(1)] 
$\Phi_{X \to X'}^{{\cal E}}(E)^{\vee} \in {\cal M}_{H'}(v')$
for a general $E \in {\cal M}_H(v) \setminus Z$, where 
$v'=\Phi_{X \to X'}^{{\cal E}}(v)$ and $Z$ is a closed substack of ${\cal M}_H(v)$. 
%\item[(2)]
%Assume that $a \geq 2$. 
%Then for a general $E \in M_H(v)$,
%we have the following claims.
%\begin{enumerate}
%\item
%$H^0(X,E) \otimes {\cal O}_X \to E$ is surjective,
%\item
%$H^1(X,E)=H^2(X,E)=0$
%for a general $E \in M_H(v)$
%\end{enumerate}
%\end{enumerate}
Moreover
$\dim Z \leq \dim {\cal M}_H(v)-2$ if $r>0$. 
\end{thm}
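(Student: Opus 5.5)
Let $\beta=\frac{q}{p}H$, so $v=e^\beta(r,d^\beta H,a^\beta)$. The hypothesis $pd-rq>0$ means $d^\beta>0$. Since $\langle e^\beta,v\rangle=-a^\beta$, the condition $-\langle e^{\frac{q}{p}H},v\rangle\geq 0$ means $a^\beta\geq 0$. So $v$ satisfies the standing assumptions of subsections \ref{subsect:tss-wall} and \ref{subsect:FM-preserve}.

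The plan is to reduce the theorem to Proposition \ref{prop:no-wall} and then verify its condition (2) numerically. That is, I will show that $\langle v,v_1\rangle\geq 0$ for every $v_1\in D_v^\beta$ with $\langle v_1^2\rangle=-2$ and $r_1d-rd_1>0$, which is the same as $r_1d^\beta-rd_1^\beta>0$. Proposition \ref{prop:no-wall} is stated for $a^\beta$ in the range used by Proposition \ref{prop:birational}, which assumes $a^\beta>0$. The boundary case $a^\beta=0$ has to be handled by the same path argument, with the semicircle $C$ degenerating, and I would check that the proof of Proposition \ref{prop:no-wall} goes through there.

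For the numerical step, take such a $v_1$ with $\epsilon=1$, and suppose for contradiction that $\langle v,v_1\rangle<0$. Since $n\geq(2r+p)p\geq 2p^2$ when $r\geq p/2$, Lemma \ref{lem:n>2p^2} gives $r_1<r$ and $d_1^\beta<2r/p^2$. When $r$ is small compared with $p$, I would instead redo the estimate of that lemma directly with $n\geq(2r+p)p$.

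The sharper bound $n\geq(2r+p)p$ is then fed into formula \eqref{eq:r_1>0}. Write $m=pd_1^\beta\geq 1$ and use $r_1d^\beta-rd_1^\beta\geq 1/p$. Then
$$(r_1d^\beta-rd_1^\beta)(n(d_1^\beta)^2-1)\geq \tfrac{1}{p}\bigl((2r+p)m^2/p-1\bigr),$$
and this should dominate $2rd_1^\beta-kr_1^2=2rm/p-kr_1^2$. Indeed,
$$(2r+p)m^2/p^2-1/p-2rm/p=\frac{2r(m^2-pm)+p(m^2-1)}{p^2}.$$
This is nonnegative for $m\geq p$, while the case $m<p$ needs the term $kr_1^2$ together with the integrality conditions $p^2k\in{\Bbb Z}$ and $pd_1^\beta\in{\Bbb Z}$. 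Balancing these terms is the step I expect to be the main obstacle, because the lattice refinements must be used carefully. If the pure estimate fails, I would sharpen $r_1d^\beta-rd_1^\beta\in\frac{1}{p}{\Bbb Z}_{>0}$ jointly with $k$.

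The outcome should be that no $(-2)$-class in $D_v^\beta$ gives a negative pairing. The residual case is $\langle v,v_1\rangle<0$ forced by $\langle v^2\rangle=-2$ and $r_1=r$. I would exclude it, or push it into $Z$.

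To get the statement with $Z$ and the codimension bound, I would argue as in \cite{CNY}. Take $Z$ to be the locus of $E\in{\cal M}_H(v)$ destabilized along the path from the Gieseker chamber to ${\cal C}$, that is, the union over the finitely many walls crossed by the segment $I(x)$ of the sheaves having a destabilizing subobject or quotient. No wall is totally semistable, by the step above together with Proposition \ref{prop:birational}, Lemma \ref{lem:r_1<0}(1),(2) (whose hypothesis $n\geq p(r+p)$ follows from $n\geq(2r+p)p$) and Lemma \ref{lem:r_1>0}(2) for the isotropic $v_1$. So $Z$ is a proper closed substack, and Proposition \ref{prop:isom} applies on the complement. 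For the bound $\dim Z\leq\dim{\cal M}_H(v)-2$, estimate the dimension of the stack of extensions $0\to E_1\to E\to E_2\to 0$ along each wall as $\dim{\cal M}(v_1)+\dim{\cal M}(v-v_1)+\dim\Ext^1(E_2,E_1)$. Comparing this with $\langle v^2\rangle+1$ gives a codimension of $\langle v_1,v-v_1\rangle$ minus correction terms. The strict inequalities obtained above, $\langle v,v_1\rangle\geq 2$ or $\geq 3$ in Lemmas \ref{lem:r_1<0} and \ref{lem:r_1>0}, are exactly what yield codimension at least $2$ when $r>0$.
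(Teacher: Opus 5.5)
Your reduction matches the paper's: apply Proposition \ref{prop:no-wall}, dispose of the isotropic classes and of the classes with $r_1d^\beta-rd_1^\beta<0$ by Lemmas \ref{lem:r_1<0} and \ref{lem:r_1>0}, and check the remaining $(-2)$-classes with \eqref{eq:r_1>0}. The gap is that this last check, which is the entire content of the theorem, is left open, and with the bound you use it cannot be closed.

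The missing observation is that $r_1d^\beta-rd_1^\beta=r_1d-rd_1$, because the $q/p$ terms cancel. So it is a positive integer, hence $\geq 1$, not merely $\geq 1/p$. Since $pd_1^\beta\in{\Bbb Z}_{>0}$, you have $n(d_1^\beta)^2-1\geq (2r+p)/p-1\geq 0$, so you may replace the first factor by $1$. The numerator of \eqref{eq:r_1>0} (with $\epsilon=1$) is then at least
\[
n(d_1^\beta)^2-1-2rd_1^\beta+kr_1^2\;\geq\;2rd_1^\beta\bigl(pd_1^\beta-1\bigr)+\bigl((pd_1^\beta)^2-1\bigr)+kr_1^2\;>\;0,
\]
because $k>0$ and $r_1>0$. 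This is uniform in $m=pd_1^\beta$. There is no separate case $m<p$, no need for Lemma \ref{lem:n>2p^2}, and no residual case with $r_1=r$.

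With only $r_1d^\beta-rd_1^\beta\geq 1/p$, your lower bound is actually negative. For example, with $m=1$, $r_1=1$, $k=1/p^2$ it equals $(1-2r(p-1))/p^2<0$ whenever $p\geq 2$ and $r\geq 1$. So the step cannot be closed by juggling the integrality of $k$ and $m$; you need the integrality of $r_1d-rd_1$.

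Second, your target inequality is too weak for the codimension statement. You aim only at $\langle v,v_1\rangle\geq 0$ for these $(-2)$-classes, and then attribute $\dim Z\leq\dim{\cal M}_H(v)-2$ to the strict inequalities of Lemmas \ref{lem:r_1<0} and \ref{lem:r_1>0}. But those lemmas do not cover the spherical classes with $r_1d^\beta-rd_1^\beta>0$. Such a class with $\langle v,v_1\rangle=0$ would give a Brill--Noether type wall whose destabilized locus is a divisor. What is needed, and what the paper's proof reduces to, is $\langle v,v_1\rangle>0$, which the integral estimate above delivers.

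Finally, a class with $\langle v,v_1\rangle<0$ could not be ``pushed into $Z$''. It would define a totally semistable wall, destabilizing every $E\in{\cal M}_H(v)$.
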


\begin{proof}
We note that $n>p^2$.
By Lemma \ref{lem:r_1<0} and Lemma \ref{lem:r_1>0},
it is sufficient to prove that
$\langle v,v_1 \rangle >0$ for $v_1=(r_1,d_1 H,a_1) \in D_v^\beta$
with $r_1>0$ and $\langle v_1^2 \rangle=-2$.
By $n \geq (2r+p)p$ and
\eqref{eq:r_1>0}, we see that
\begin{equation}
\begin{split}
& n(d_1^\beta)^2-1-2r d_1^\beta+k r_1^2\\
\geq & (2r+p)p (d_1^\beta)^2-2r d_1^\beta-1+kr_1^2 \\
=& 2r(pd_1^\beta-1)d_1^\beta+(pd_1^\beta)^2-1+kr_1^2>0.\\
\end{split}
\end{equation}
Hence the condition in Proposition \ref{prop:no-wall} (2) holds.
Therefore our claim holds.
\end{proof}

We also give a different condition for the birational correspondence.

\begin{prop}
Let $v=(r,dH,a)$ be a Mukai vector satisfying \eqref{eq:cond-v}.
Assume that $n \geq 2p^2$ and $d-r\frac{q}{p} \geq 2r+2\frac{r^2}{p^2}$.
Then 
$\Phi_{X \to X'}^{{\cal E}}(E)^{\vee} \in {\cal M}_{H'}(v')$
for a general $E \in {\cal M}_H(v) \setminus Z$, where 
$v'=\Phi_{X \to X'}^{{\cal E}}(v)$ and $Z$ is a closed substack 
with $\dim Z \leq \dim {\cal M}_H(v)-2$.
%$D_{X'} \circ \Phi_{X \to X'}^{{\cal E}}$ induces a birational map
%${\cal M}_H(r,dH,a) \to  {\cal M}_H(v')$, where 
%$v'=\Phi_{X \to X'}^{{\cal E}}(v)$. 
\end{prop}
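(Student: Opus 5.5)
The plan follows the proof of Theorem \ref{thm:n}: reduce to the numerical criterion of Proposition \ref{prop:no-wall}, and replace the bound $n \geq (2r+p)p$ by Lemma \ref{lem:n>2p^2} together with the hypothesis on $d^\beta$. Since $n \geq 2p^2 > p^2$, Lemma \ref{lem:r_1<0} and Lemma \ref{lem:r_1>0} (2) handle every $v_1 \in D_v^\beta$ with $r_1 d^\beta - r d_1^\beta < 0$, or with $\langle v_1^2 \rangle = 0$. For these, $\langle v, v_1 \rangle \geq 2$ (resp. $\geq 3$), so none gives a totally semistable wall and none obstructs the birational correspondence. For $r_1 d^\beta - r d_1^\beta < 0$ with $\langle v^2 \rangle = -2$, Lemma \ref{lem:r_1<0} (2) needs $n \geq p(r+p)$. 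I will check this separately from $n \geq 2p^2$ and the hypothesis on $d^\beta$; the case $\langle v^2 \rangle=-2$ is rigid anyway, so it is essentially harmless.

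It therefore suffices to show $\langle v, v_1 \rangle \geq 0$ (better, $>0$) for $v_1 \in D_v^\beta$ with $\langle v_1^2 \rangle = -2$ and $r_1 d^\beta - r d_1^\beta > 0$. Argue by contradiction: suppose $\langle v, v_1 \rangle < 0$. Lemma \ref{lem:n>2p^2} then gives $r_1 < r$ and $d_1^\beta < 2r/p^2$. Now $v_1 \in D_v^\beta$ and $k > 0$ give $a_1^\beta > a^\beta d_1^\beta / d^\beta \geq 0$. Combined with $r_1 a_1^\beta = n (d_1^\beta)^2 + 1$, this bounds $a_1^\beta$ above, roughly by $a_1^\beta \le (n (d_1^\beta)^2+1)$ with $r_1\ge1$.

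Next, use formula \eqref{eq:r_1>0}. The factor $r_1 d^\beta - r d_1^\beta$ is now large, because $d^\beta \geq 2r + 2r^2/p^2$ while $r d_1^\beta < 2r^2/p^2$. Hence $r_1 d^\beta - r d_1^\beta \geq d^\beta - 2r^2/p^2 \geq 2r$. Substituting gives
\begin{equation*}
(r_1 d^\beta - r d_1^\beta)(n (d_1^\beta)^2 - 1) - 2 r d_1^\beta + k r_1^2 \geq 2r\bigl(n(d_1^\beta)^2-1\bigr) - 2 r d_1^\beta + k r_1^2 .
\end{equation*}
Here $n (d_1^\beta)^2 - 1 \geq (n - p^2)(d_1^\beta)^2 \geq (pd_1^\beta)^2 \ge p d_1^\beta \geq d_1^\beta$, using $p d_1^\beta \in \mathbb{Z}_{>0}$ and $n \geq 2p^2$. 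So the right-hand side is at least $2r(d_1^\beta - d_1^\beta) + k r_1^2 = k r_1^2 > 0$. This contradicts $\langle v, v_1 \rangle < 0$ and in fact yields $\langle v, v_1\rangle>0$. If the constant does not close exactly, I would use the sharper splitting $n(d_1^\beta)^2-1 \ge (pd_1^\beta)^2 + p^2 (d_1^\beta)^2 - 1$ as in Lemma \ref{lem:n>2p^2}.

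With no such walls, Proposition \ref{prop:no-wall} gives $\Phi_{X\to X'}^{\cal E}(E)^\vee \in {\cal M}_{H'}(v')$ for general $E$. The locus $Z$ consists of $E$ destabilized along the path $I(x)$ of Proposition \ref{prop:no-wall}, from the Gieseker chamber to ${\cal C}$. Every wall crossed is non-totally-semistable and has $\langle v, v_1 \rangle > 0$. I expect the codimension estimate $\dim Z \leq \dim {\cal M}_H(v) - 2$ to be the main obstacle. For it, I would invoke the same argument used for Theorem \ref{thm:n}: at a wall defined by $v_1$ with $\langle v_1^2\rangle\ge -2$ and $\langle v,v_1\rangle>0$, the stack of destabilized objects has dimension at most $\dim {\cal M}(v_1)+\dim{\cal M}(v-v_1)+\dim \Ext^1$. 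A standard computation then gives codimension $\langle v_1, v - v_1\rangle \geq 2$ when $r>0$. Checking $\langle v_1, v-v_1\rangle \ge 2$ for the relevant $v_1$ (using the strict inequalities above and the $\epsilon=0$ case's bound $\geq 3$) is the step needing care.
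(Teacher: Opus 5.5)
This is the paper's proof. You reduce via Proposition \ref{prop:no-wall} and Lemmas \ref{lem:r_1<0} and \ref{lem:r_1>0} to spherical $v_1\in D_v^\beta$ with $r_1d^\beta-rd_1^\beta>0$, use Lemma \ref{lem:n>2p^2} to get $d_1^\beta<2r/p^2$ and hence $r_1d^\beta-rd_1^\beta\ge 2r$, and conclude from \eqref{eq:r_1>0}; your inequality $n(d_1^\beta)^2-1\ge d_1^\beta$ is the paper's $n(d_1^\beta)^2-1-d_1^\beta\ge (p^2(d_1^\beta)^2-1)+d_1^\beta(p^2d_1^\beta-1)\ge 0$.

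One correction: the contradiction argument yields only $\langle v,v_1\rangle\ge 0$, not ``in fact $>0$''. Strictness is what the codimension claim needs, because the extension count gives codimension $\langle v_1,v-v_1\rangle-1$, not $\langle v_1,v-v_1\rangle$, so a spherical $v_1$ with $\langle v,v_1\rangle=0$ would give a divisor. Strictness does hold: your bound $r_1d^\beta-rd_1^\beta\ge 2r$ uses only $r_1\ge 1$ and $d_1^\beta<2r/p^2$, and for $d_1^\beta\ge 2r/p^2$ the estimate in the proof of Lemma \ref{lem:n>2p^2} already gives $\langle v,v_1\rangle>0$ with no sign assumption.
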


\begin{proof}
Assume that there is a totally semistable wall.
By Lemma \ref{lem:r_1<0} and Lemma \ref{lem:r_1>0},
it is defined by a Mukai verctor $v_1=(r_1,d_1 H,a_1) \in D_v^\beta$ satisfying
$r_1 d^\beta-r d^\beta_1>0$ and $\langle v_1^2 \rangle=-2$. 
We may assume that 
$r_1<r$ and $d_1^\beta<\frac{2r}{p^2}$ (Lemma \ref{lem:n>2p^2}).
Then
$r_1 d-r d_1 \geq d^\beta-rd_1^\beta \geq 2r$.
Hence
\begin{equation}
\begin{split}
& (r_1 d-r d_1)(n (d_1^\beta)^2-1)-(2rd_1^\beta-kr_1^2)\\
\geq & 2r(n (d_1^\beta)^2-1-d_1^\beta)+kr_1^2 \\
\geq & 2r((p^2 (d_1^\beta)^2-1)+d_1^\beta (p^2 d_1^\beta-1))+kr_1^2>0.
\end{split}
\end{equation} 
Hence the condition in Proposition \ref{prop:no-wall} (2) holds, and hence
our claim holds.
\end{proof}

\subsection{A remark on \cite{aCM:K3}.}\label{subsect:aCM}

As a remark on \cite[Thm. 4.2]{aCM:K3}, we get the following refinement.

\begin{lem}\label{lem:aCM:remark}
Let $v=(r,dH,a)$ be a Mukai vector such that
$r \geq 0,a^\beta \leq 0$, $d^\beta>0$.
\begin{enumerate}
\item
[(1)]
If $n \geq p^2$ and $p \nmid q^2 n+1$, then
$\langle v,v_1 \rangle>0$ for $v_1 \in D_v^\beta$.
\item
[(2)]
If $n>p^2$, then $\langle v,v_1 \rangle \geq 3$ if $\epsilon=0$.
\end{enumerate}

\end{lem}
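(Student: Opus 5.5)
The plan is a direct computation of
$$\langle v,v_1 \rangle = 2nd^\beta d_1^\beta-r_1 a^\beta-r a_1^\beta$$
for $v_1 \in D_v^\beta$. We use $r_1 a_1^\beta = n(d_1^\beta)^2+\epsilon$, which is $\langle v_1^2 \rangle=-2\epsilon$, together with the sign condition $\frac{k}{r_1 d^\beta - r d_1^\beta}>0$, where $k=a_1^\beta d^\beta-a^\beta d_1^\beta$. The key numerical input is $pd_1^\beta \in {\Bbb Z}_{>0}$. Hence $n(d_1^\beta)^2 \geq n/p^2 \geq 1$ when $n \geq p^2$, with strict inequality when $n>p^2$. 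As in Lemma \ref{lem:r_1<0} and Lemma \ref{lem:r_1>0}, I split according to the sign of $r_1 d^\beta - r d_1^\beta$. This quantity is nonzero, since otherwise $v_1$ would lie on the vertical line and not define a semicircular wall. The new feature compared with those lemmas is $a^\beta \leq 0$, so the term $-r_1 a^\beta$ changes its role.

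\emph{Case $r_1 d^\beta - r d_1^\beta<0$.} Then $k<0$, so $a_1^\beta d^\beta < a^\beta d_1^\beta \leq 0$ and $a_1^\beta<0$. Since $r_1 a_1^\beta>0$, this forces $r_1<0$. Now $-r_1 a^\beta = |r_1| a^\beta$, and from $a^\beta > a_1^\beta d^\beta/d_1^\beta$ we get $|r_1|a^\beta > -(n(d_1^\beta)^2+\epsilon)d^\beta/d_1^\beta$. Substituting gives
$$\langle v,v_1\rangle > \frac{d^\beta}{d_1^\beta}\bigl(n(d_1^\beta)^2-\epsilon\bigr)+r|a_1^\beta| \geq 0.$$
For (2), with $\epsilon=0$ and $n>p^2$, the right-hand side is at least $n d^\beta d_1^\beta>1$. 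Integrality of $\langle v,v_1\rangle$ then gives $\langle v,v_1\rangle \geq 2$. To reach $3$ I would keep the slack in the strict inequality $a^\beta d_1^\beta - a_1^\beta d^\beta \geq 1/p^2$, i.e. $|k| \geq 1/p^2$, which adds a term $|r_1|/(p^2 d_1^\beta)$.

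\emph{Case $r_1 d^\beta - r d_1^\beta>0$.} Here $r_1>0$, since $r_1 \leq 0$ would give $r d_1^\beta < r_1 d^\beta \leq 0$, which is impossible. Since $a^\beta \leq 0$, we have $-r_1 a^\beta \geq 0$, and $a_1^\beta = (n(d_1^\beta)^2+\epsilon)/r_1$. Thus
$$\langle v,v_1\rangle \geq \frac{nd_1^\beta}{r_1}\bigl(2r_1 d^\beta - r d_1^\beta\bigr)-\frac{r\epsilon}{r_1}.$$
Using $r/r_1 < d^\beta/d_1^\beta$ and $2r_1d^\beta - r d_1^\beta > r_1 d^\beta$, this is $> \frac{d^\beta}{d_1^\beta}(n(d_1^\beta)^2-\epsilon) \geq 0$ under $n \geq p^2$, which proves (1) in this case. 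For (2) with $\epsilon=0$, I would instead write
$$\langle v,v_1\rangle = nd^\beta d_1^\beta + \frac{nd_1^\beta}{r_1}(r_1d^\beta - rd_1^\beta) - r_1 a^\beta.$$
The first term exceeds $1$, and the second is positive, being bounded below via $r_1 d^\beta - r d_1^\beta \in \frac1p{\Bbb Z}_{>0}$. An AM--GM estimate as in \eqref{eq:e=0} should then push the total above $2$.

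The hypothesis $p \nmid q^2 n+1$ in (1) is where I expect the main obstacle. The inequalities above are already strict, so its role must be to exclude the boundary configurations where the estimates degenerate. The critical situation is $n=p^2$ with $pd_1^\beta=1$, so that $n(d_1^\beta)^2=\epsilon=1$. There, the integrality conditions $pd_1-qr_1=1$, $p^2a_1^\beta\in{\Bbb Z}$ and $r_1a_1^\beta=2$ should force $p \mid q^2n+1$. I would verify this by writing $v_1=e^\beta(r_1,\tfrac1p H,a_1^\beta)$ and reducing $a_1 = a_1^\beta + 2nd_1\frac qp - r_1 n\frac{q^2}{p^2}$ modulo ${\Bbb Z}$. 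The same check is needed in the borderline subcase $a^\beta=0$, where the term $-r_1a^\beta$ provides no slack. The equality analysis for (2) is the other delicate point, and would be settled by tracking the $1/p$- and $1/p^2$-integrality of $r_1d^\beta-rd_1^\beta$ and $k$.
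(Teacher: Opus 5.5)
\textbf{Part (1).} Your argument is already a complete proof. It is essentially the paper's: you split on the sign of $r_1d^\beta-rd_1^\beta$ and use $n(d_1^\beta)^2\ge 1\ge\epsilon$, while the paper writes the same two bounds via \eqref{eq:aCM:remark1} and via \eqref{eq:r_1>0} with $r_1<0$. The hypothesis $p\nmid q^2n+1$ is not used in the paper's proof either. Your closing programme is therefore unnecessary: both of your inequalities are strict, so nothing degenerates. The claim you meant to check is also false. For $p=2$, $q=1$, $n=4$, the vector $v_1=(1,H,5)$ has $pd_1-qr_1=1$, $d_1^\beta=\tfrac12$, $a_1^\beta=2$ and $n(d_1^\beta)^2=\epsilon=1$, yet $p\nmid q^2n+1=5$.

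\textbf{Part (2), case $r_1d^\beta-rd_1^\beta>0$.} This is the genuine gap. You only show $nd^\beta d_1^\beta>1$ and $\frac{nd_1^\beta}{r_1}(r_1d^\beta-rd_1^\beta)>0$, and these two terms need not exceed $2$. Take $p=2$, $q=1$, $n=5$, $v=(3,2H,6)$, $v_1=(5,3H,9)$. Then $d^\beta=d_1^\beta=\tfrac12$, $r_1=5>r=3$, $a^\beta=-\tfrac14$, $a_1^\beta=\tfrac14$, $\epsilon=0$ and $k=\tfrac14$, so $v_1\in D_v^\beta$. Your two terms sum to $\tfrac54+\tfrac12=\tfrac74$. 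The missing $\tfrac54$ is $-r_1a^\beta$, and $\langle v,v_1\rangle=3$ exactly, so that term cannot be discarded.

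The fix is to combine $nd^\beta d_1^\beta-r_1a^\beta=kr_1/d_1^\beta$ (for $\epsilon=0$). This gives \eqref{eq:e=0}, an algebraic identity valid for either sign of $a^\beta$ and of $r_1$. Then apply AM--GM: the product of the two terms is $kn(r_1d^\beta-rd_1^\beta)$. For this you need $r_1d^\beta-rd_1^\beta=r_1d-rd_1\in{\Bbb Z}_{>0}$, not merely $\frac1p{\Bbb Z}_{>0}$ as you wrote. With only $\frac1p$-integrality you get $2\sqrt{n/p^3}$, which in the example is $2\sqrt{5/8}<2$. With integrality and $k\ge 1/p^2$ the product is at least $n/p^2>1$, so $\langle v,v_1\rangle>2$ and hence $\ge 3$. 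This is exactly the paper's proof.

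\textbf{Part (2), case $r_1<0$.} The same AM--GM argument works here. Your own bound also suffices, which you did not notice. Since $r_1<0\le r$, the clause ``$r_1>r$ if $d_1^\beta=d^\beta$'' in $D_v^\beta$ forces $d_1^\beta<d^\beta$. Hence $d^\beta\ge d_1^\beta+\frac1p\ge\frac2p$ and $nd^\beta d_1^\beta\ge 2n/p^2>2$.
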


\begin{proof}
We set $k:=a_1^\beta d^\beta-a^\beta d_1^\beta$ as in Lemma \ref{lem:r_1>0}.

(1)
If $r_1 d^\beta-r d_1^\beta>0$, then we see that
\begin{equation}\label{eq:aCM:remark1}
\langle v,v_1 \rangle=\frac{2 d_1^\beta n}{r_1}(r_1 d^\beta-r d_1^\beta)+
\frac{r}{r_1}((d_1^\beta)^2 n-\epsilon)-r_1 a^\beta>0.
\end{equation}

If $r_1 d^\beta-r d_1^\beta<0$, then 
we get $k<0$, which implies $a_1<\frac{d_1^\beta}{d^\beta}a^\beta \leq 0$.
Hence we also get
$r_1<0$. 
By using \eqref{eq:r_1>0}, we get
\begin{equation}\label{eq:aCM:remark2}
\langle v,v_1 \rangle=
\frac{1}{-r_1 d_1^\beta}\left(
-(r_1 d^\beta-r d_1^\beta)(n(d^\beta_1)^2-\epsilon)+(2rd_1^\beta \epsilon -k r_1^2) \right)>0.
\end{equation}

(2)
We first assume that 
$r_1 d^\beta-r d_1^\beta<0$.
%We set $k_1:=-(a_1^\beta d^\beta-a^\beta d_1^\beta)>0$.
Then $k<0$, $a_1^\beta<\frac{d_1^\beta}{d^\beta} a^\beta \leq 0$ and
$-k \geq 1/p^2$.
If $\epsilon=0$ and $n>p^2$, then
\begin{equation}
\begin{split}
\langle v,v_1 \rangle =  &
(r d_1^\beta-r_1 d^\beta)
n\frac{d_1^\beta}{-r_1}-k \frac{-r_1}{d_1^\beta}
\geq 2\sqrt{(-k) n(r d_1^\beta-r_1 d^\beta)} > 2.
%\left(r_1 d-r d_1+\frac{r_1^2}{nd_1^2}\right) \\
%\geq &
%\frac{nd_1^2}{d_1 r_1}2 \sqrt{(r_1 d-r d_1)\frac{r_1^2}{nd_1^2}} \\
%= & 2\sqrt{n(r_1 d-r d_1)} \geq 2.
\end{split}
\end{equation}

We next assume that $r_1 d^\beta-r d_1^\beta>0$.
%We set $k_2:=a_1^\beta d^\beta-a^\beta d_1^\beta>0$.
Note that $r_1>r \frac{d_1^\beta}{d^\beta} \geq 0$ and
$k \geq 1/p^2$.
If $\epsilon=0$ and $n>p^2$, then
\begin{equation}
\begin{split}
\langle v,v_1 \rangle =  &
(r_1 d^\beta-r d_1^\beta)
n\frac{d_1^\beta}{r_1}+k \frac{r_1}{d_1^\beta}
\geq 2\sqrt{k n(r_1 d^\beta-r d_1^\beta)} > 2.
%\left(r_1 d-r d_1+\frac{r_1^2}{nd_1^2}\right) \\
%\geq &
%\frac{nd_1^2}{d_1 r_1}2 \sqrt{(r_1 d-r d_1)\frac{r_1^2}{nd_1^2}} \\
%= & 2\sqrt{n(r_1 d-r d_1)} \geq 2.
\end{split}
\end{equation}

\end{proof}

Let ${\cal E} \in {\bf D}(X \times X')$ be a complex such that
${\cal E}^{\vee}$ is the universal family of stable sheaves with the Mukai vector
$r_0 e^\beta$, where $X' \cong M_H(r_0 e^\beta)$.
Let us consider Fourier-Mukai transform
$\Phi_{X \to X'}^{{\cal E}}:{\bf D}(X) \to {\bf D}(X')$.
If $p \nmid q^2 n+1$, then 
${\cal E}$ is a family of $\mu$-stable locally free sheaves on $X$.
By Lemma \ref{lem:aCM:remark}, 
we can estimate the dimension of the indeterminacy locus of the birational correspondence.

\begin{thm}\label{thm:a<0}
Let $v=(r,dH,a)$ be a Mukai vector such that
$$
r \geq 0, \; pd-rq>0,\;
\langle v^2 \rangle \geq 0.
$$ 
Assume that $n > p^2$.
% and $p \nmid q^2 n+1$.
If $\langle v,v_0 \rangle \geq 0$, then
$\Phi_{X \to X'}^{{\cal E}}(E)[1] \in {\cal M}_{H'}(v')$
for a general $E \in {\cal M}_H(v) \setminus Z$, where  
$v'=-\Phi_{X \to X'}^{{\cal E}}(v)$ and $Z$ is a closed substack of ${\cal M}_H(v)$
such that $\dim Z \leq \dim {\cal M}_H(v)-2$.
\end{thm}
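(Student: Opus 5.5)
The plan is to follow the strategy of Theorem \ref{thm:n}, now in the setting of \cite[Thm. 4.2]{aCM:K3}. Here ${\cal E}^{\vee}$ is the universal family with Mukai vector $v_0=r_0e^\beta$, and the hypothesis $\langle v,v_0\rangle \geq 0$ becomes $-r_0 a^\beta \geq 0$, that is $a^\beta \leq 0$. As in the aCM setting, we use the stability conditions $\sigma_{(sH,tH)}$ on the side where ${\cal E}_{|X\times\{x'\}}^{\vee}$ is stable, hence where ${\cal E}_{|X\times\{x'\}}^{\vee}$ and $E$ lie in the same heart with the appropriate phase ordering. The transform $\Phi_{X\to X'}^{{\cal E}}[1]$ then identifies ${\cal M}_{(sH,tH)}(v)$, for $(s,t)$ in the chamber adjacent to $(s_0,0)$, with ${\cal M}_{H'}(v')$. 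This is the analogue of Proposition \ref{prop:isom} from \cite{aCM:K3}.

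Next we connect the Gieseker chamber for $v$ to this chamber by a path inside ${\Bbb H}^0$ (Proposition \ref{prop:stability-cond}). The claim reduces to showing two things: no wall crossed along the path is totally semistable, and the locus of sheaves destabilized at each crossed wall has codimension at least $2$ in ${\cal M}_H(v)$. The walls that can occur are $W_{v_1}$ with $v_1\in D_v^\beta$, as in the proofs of Propositions \ref{prop:birational} and \ref{prop:no-wall}.

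For the codimension estimate we use the standard dimension count from \cite{CNY}, \cite{aCM:K3}. Let $E$ be destabilized at the wall $W_{v_1}$. Its Harder--Narasimhan filtration on the other side has factors $E_i$ with $v(E_i)$ built from $v_1$ and $v-v_1$ (or from multiples of a spherical or isotropic class). The codimension of the stratum is bounded below by $\sum_{i<j}\langle v(E_i),v(E_j)\rangle$, with the usual correction when a factor has $\langle v(E_i)^2\rangle\le 0$. For $\epsilon=1$, where $v_1$ is spherical, the stratum has codimension $\geq \langle v,v_1\rangle$ up to lower-order terms. For $\epsilon=0$, the isotropic case, the stratum has codimension at least $\langle v,v_1\rangle-1$, which reflects the fibration structure of the moduli of the isotropic factor. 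We therefore need $\langle v,v_1\rangle\ge 2$ when $\epsilon=1$ and $\langle v,v_1\rangle \geq 3$ when $\epsilon=0$.

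Lemma \ref{lem:aCM:remark}(2) gives $\langle v,v_1\rangle\ge 3$ for $\epsilon=0$ under $n>p^2$. For $\epsilon=1$ we rerun the computations \eqref{eq:aCM:remark1} and \eqref{eq:aCM:remark2}, now without the hypothesis $p\nmid q^2n+1$. In the case $r_1d^\beta-rd_1^\beta>0$ there is a term $2d_1^\beta n(r_1d^\beta-rd_1^\beta)/r_1$. Bound it using $r_1d^\beta-rd_1^\beta\ge 1/p$, $pd_1^\beta\in{\Bbb Z}$ and $n>p^2$. Together with $r(n(d_1^\beta)^2-1)/r_1\ge 0$ (since $n(d_1^\beta)^2>1$) and $-r_1a^\beta\ge0$, this should give $\langle v,v_1\rangle\geq 2$. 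The case $r_1d^\beta-rd_1^\beta<0$ forces $r_1,a_1^\beta<0$, and we argue as in the first part of Lemma \ref{lem:r_1<0}. The hypothesis $\langle v^2\rangle\ge0$ is used to exclude the degenerate configurations, namely $v$ itself a multiple of $v_1$-type classes.

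The main obstacle is the integrality bookkeeping. If $p\mid q^2n+1$, the vector $v_1$ may be the class $v_0$ of the kernel itself up to $e^\beta$-twist, or $d_1^\beta$ may equal $1/p$ exactly. This is precisely the boundary case of the estimate above, where $\langle v,v_1\rangle$ could drop to $1$. I would first try to show that such $v_1$ satisfy $(p d_1^\beta)^2 n/p^2>1$ strictly and then use $\langle v,v_1\rangle\in{\Bbb Z}$. If that fails, I would show that the corresponding wall lies outside the path, which would follow from the description of $U_\pm^\beta$.
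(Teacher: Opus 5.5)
Your route is the paper's: the birational correspondence comes from the wall-crossing path of \cite[Thm.~4.2]{aCM:K3}, the walls crossed are the $W_{v_1}$ with $v_1\in D_v^\beta$, and the codimension estimate comes from Lemma \ref{lem:aCM:remark}. Part (2) of that lemma handles $\epsilon=0$ exactly as you say. The genuine gap is your codimension count for spherical classes.

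Let $T$ be the stable spherical object of class $v_1$, and suppose $E$ has $T^{\oplus k}$ as a sub- or quotient object at the wall. The stratum of such $E$ has codimension at least $k\langle v,v_1\rangle+k^2\geq\langle v,v_1\rangle+1$, not ``$\langle v,v_1\rangle$ up to lower-order terms''. So for $\epsilon=1$ the correct criterion is $\langle v,v_1\rangle>0$. This is what Lemma \ref{lem:aCM:remark}(1) gives, and it is what the paper uses (compare the proof of Theorem \ref{thm:n}).

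The bound $\langle v,v_1\rangle\geq 2$ that you set out to prove is false under the hypotheses. Take $n=5$, $(p,q)=(2,1)$, $v_0=(4,2H,5)$, $v=(16,9H,25)=e^\beta(16,H,0)$ and $v_1=(9,5H,14)=e^\beta(9,\tfrac{1}{2}H,\tfrac{1}{4})$. Then $v_1\in D_v^\beta$ with $r_1d^\beta-rd_1^\beta=1$, the wall meets $s=s_0$ at $t^2=1/20$, and $\langle v,v_1\rangle=5-4=1$. In the other case, $v=(0,H,1)$ and $v_1=(-1,0,-1)\in D_v^\beta$ also give $\langle v,v_1\rangle=1$. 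There the destabilized locus is the Brill--Noether locus $\{h^0(E)\geq 2\}$, which has codimension $2$. So neither your estimate nor your fallback of pushing the wall off the path can succeed. What saves the theorem is that $\langle v,v_1\rangle=1$ already gives codimension $2$.

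There are two smaller problems. First, in the case $r_1d^\beta-rd_1^\beta<0$ you cannot argue as in the first part of Lemma \ref{lem:r_1<0}. That argument relies on $-r_1a^\beta\geq a^\beta\geq 0$, whereas here $a^\beta\leq 0$ makes $-r_1a^\beta\leq 0$. You must instead use $k<0$ and rewrite \eqref{eq:r_1>0} as in \eqref{eq:aCM:remark2}, where every term is positive. Second, the case $p\mid q^2n+1$ needs no separate treatment for positivity. Since $pd_1^\beta$ is a positive integer, $n(d_1^\beta)^2\geq n/p^2>1\geq\epsilon$, and this is all the proof of Lemma \ref{lem:aCM:remark}(1) uses. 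Also note that $r_1d^\beta-rd_1^\beta=r_1d-rd_1$ is a nonzero integer, not merely of absolute value at least $1/p$.
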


By modifying the proof of Theorem \ref{thm:a<0},
we can also show the following claim.
 
\begin{thm}\label{thm:a<<0}
For a fixed $r$ and $d$
with
$$
r>0,\; pd-qr>0,
$$
if 
$$
a^\beta \leq \min \left\{-\frac{p(pd-qr)}{r}, -p, -r,-2(pd-qr) \right\},
$$ 
then
$\Phi_{X \to X'}^{{\cal E}}(E)[1] \in {\cal M}_{H'}(v')$
for a general $E \in {\cal M}_H(v) \setminus Z$, where  
$v'=-\Phi_{X \to X'}^{{\cal E}}(v)$ and $Z$ is a closed substack of ${\cal M}_H(v)$
such that $\dim Z \leq \dim {\cal M}_H(v)-2$.
%$\Phi_{X \to X'}^{{\cal E}[1]}$ induces a birational map.
\end{thm}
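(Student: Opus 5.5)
The plan is to repeat the strategy of Theorem \ref{thm:a<0}, in the setting $a^\beta<0$ where Lemma \ref{lem:aCM:remark} is the relevant estimate. The argument of \cite[Thm. 4.2]{aCM:K3}, refined as in Theorem \ref{thm:a<0}, reduces the claim to a statement about walls. It suffices to show that no wall between the Gieseker chamber and the chamber ${\cal C}_0^\beta$ is totally semistable, and that every wall crossed has codimension $\geq 2$ destabilizing locus. The walls are defined by $v_1\in D_v^\beta$, with the orientation adapted to $a^\beta\le 0$. In terms of Mukai vectors, the requirement is
$$
\langle v,v_1\rangle>0 \quad (\epsilon=1), \qquad \langle v,v_1\rangle\ge 3 \quad (\epsilon=0).
$$
The first kills totally semistable walls. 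The second gives codimension $\ge 2$ for the locus where stability changes, via the usual estimate $\dim \leq \langle v^2\rangle+2-\langle v,v_1\rangle+1$. The new feature compared with Theorem \ref{thm:a<0} is that there is no hypothesis on $n$ beyond $n\ge 1$, and no hypothesis $\langle v^2\rangle\ge0$. The largeness of $-a^\beta$ must therefore replace the lattice-theoretic inputs $n>p^2$ and $p\nmid q^2n+1$.

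I will split into the two cases $r_1d^\beta-rd_1^\beta>0$ and $<0$, as in the proof of Lemma \ref{lem:aCM:remark}.

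\textbf{Case $r_1d^\beta-rd_1^\beta>0$.} Here $r_1>0$. Formula \eqref{eq:aCM:remark1} gives
$$
\langle v,v_1\rangle=\tfrac{2nd_1^\beta}{r_1}(r_1d^\beta-rd_1^\beta)+\tfrac{r}{r_1}(n(d_1^\beta)^2-\epsilon)-r_1a^\beta .
$$
Since $d_1^\beta \geq 1/p$ and $r_1d^\beta - rd_1^\beta \geq 1/p$, the first term is positive. The middle term is $\ge -r/r_1$ and the last is $\ge r_1|a^\beta|$. So $\langle v,v_1\rangle > r_1|a^\beta|-r/r_1$, which is $\geq 3$-ish once $|a^\beta|\ge r$ and $r_1\geq 1$, with a small extra check when $r_1=1$. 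This is where the bounds $-a^\beta\ge r$ and $-a^\beta \ge p$ should enter.

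\textbf{Case $r_1d^\beta-rd_1^\beta<0$.} As in Lemma \ref{lem:aCM:remark}, $k<0$ and $r_1<0$, and I use \eqref{eq:aCM:remark2}. The term $-kr_1^2$ has $-k=a^\beta d_1^\beta-a_1^\beta d^\beta$ with $a_1^\beta<0$. I expect this case to be the main obstacle, because $r_1$ and $a_1^\beta$ are now both negative and unbounded. I would first bound $|r_1|$ using $n(d_1^\beta)^2+\epsilon=r_1a_1^\beta$ together with $d_1^\beta\le d^\beta=(pd-qr)/p$. I would then write $\langle v,v_1\rangle=2nd^\beta d_1^\beta-r_1a^\beta-ra_1^\beta$, in which all three terms are nonnegative, and use $-r_1a^\beta\ge |a^\beta|\ge \max\{p,r,2(pd-qr)\}$ to get $\langle v,v_1\rangle\ge 3$. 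The bound $-a^\beta\ge p(pd-qr)/r$ is expected to control the remaining cases. These are the degenerate cases where $r_1$ is small and $d_1^\beta$ is close to $d^\beta$. Via the condition defining $D_v^\beta$, the ratio $(a_1^\beta d^\beta-a^\beta d_1^\beta)/(r_1d^\beta-rd_1^\beta)$ there forces the wall to lie outside the region between the Gieseker chamber and ${\cal C}$, so it is not crossed at all.

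Finally, I will combine the two cases with Proposition \ref{prop:birational} and Proposition \ref{prop:isom}, and use the path $I(x)$ from Proposition \ref{prop:no-wall}. This concludes that a general $E$ outside a closed $Z$ of codimension $\geq 2$ is mapped to ${\cal M}_{H'}(v')$ by $\Phi_{X\to X'}^{\cal E}(\cdot)[1]$.
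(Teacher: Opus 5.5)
\textbf{Gap: a sign error in the case $r_1d^\beta-rd_1^\beta<0$.}

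Your case $r_1d^\beta-rd_1^\beta>0$ is fine, and a bit more direct than the paper, which argues by contradiction via $kn\le 1$. There $r_1\ge 1$, and \eqref{eq:aCM:remark1} gives two bounds. For $\epsilon=1$ it gives $\langle v,v_1\rangle>r(r_1-1/r_1)\ge 0$. For $\epsilon=0$ it gives $\langle v,v_1\rangle>r_1|a^\beta|\ge 2(pd-qr)\ge 2$, hence $\langle v,v_1\rangle\ge 3$.

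The case $r_1d^\beta-rd_1^\beta<0$ does not work. There you correctly get $r_1<0$, but also $a^\beta<0$. So $-r_1a^\beta=-|r_1|\,|a^\beta|\le -|a^\beta|$ is \emph{negative}. In $2nd^\beta d_1^\beta-r_1a^\beta-ra_1^\beta$ only the first and third terms are positive, and a large $|a^\beta|$ makes the middle term worse, not better. Hence ``$-r_1a^\beta\ge|a^\beta|$'' is false and your estimate collapses.

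The fallback about walls lying ``outside the region'' cannot rescue this. The positivity condition in $D_v^\beta$ says exactly that $W_{v_1}$ meets the line $s=q/p$ at some $t>0$, i.e.\ that the wall is crossed. So no $v_1\in D_v^\beta$ can be discarded that way.

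\textbf{The missing idea.} In this case the hypothesis on $a^\beta$ enters only through the defining inequality
$k=a_1^\beta d^\beta-a^\beta d_1^\beta<0$, i.e.\ $a^\beta>a_1^\beta d^\beta/d_1^\beta$.
This turns a lower bound on $a_1^\beta$ into one on $a^\beta$. Such a bound on $a_1^\beta$ is forced by smallness of $\langle v,v_1\rangle$ via \eqref{eq:aCM:remark2}. All terms there are positive except possibly the one containing $n(d_1^\beta)^2-\epsilon$. Argue by contradiction in each subcase:
\begin{itemize}
\item For $\epsilon=1$: $\langle v,v_1\rangle\le 0$ forces $n(d_1^\beta)^2<1$. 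Then $-a_1^\beta=(n(d_1^\beta)^2+1)/(-r_1)<2$, and using $d_1^\beta\ge 1/p$ you get $a^\beta>-2d^\beta/d_1^\beta\ge -2(pd-qr)$.
\item For $\epsilon=0$: AM--GM applied to \eqref{eq:aCM:remark2} turns $\langle v,v_1\rangle\le 2$ into $n(-k)(rd_1^\beta-r_1d^\beta)\le 1$. Since $-k\ge 1/p^2$ and $r_1<0$, this gives $d_1^\beta<p^2/(rn)$. Then $a^\beta>nd^\beta d_1^\beta/r_1\ge -nd^\beta d_1^\beta>-p(pd-qr)/r$.
\end{itemize}
Each conclusion contradicts the hypothesis. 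This is exactly where the bounds $-2(pd-qr)$ and $-p(pd-qr)/r$ are genuinely needed.
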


\begin{proof}
(1)
Assume that $\langle v,v_1 \rangle \leq 2$ for $v_1 \in D_v^\beta$ with 
$\langle v_1^2 \rangle=0$.
By the proof of Lemma \ref{lem:aCM:remark} (2), we get
\begin{equation}\label{eq:nkd}
nk(r_1 d^\beta-rd_1^\beta) \leq 1.
\end{equation}
We shall prove that
\begin{equation}\label{eq:aCM:remark3}
a^\beta > \min \left\{-\frac{p(pd-qr)}{r}, -p \right\}.
\end{equation}

We first assume that $r_1 d^\beta-rd_1^\beta<0$ and
$k=a_1^\beta d^\beta-a^\beta d_1^\beta<0$.
Since $-kp^2 \in {\Bbb Z}_{>0}$, \eqref{eq:nkd} implies
$n(r_1 d^\beta-rd_1^\beta) \geq -p^2$.
By $r_1<0$, we have $d_1^\beta \leq \frac{p^2}{rn}$.
Since $r_1 a_1^\beta=(d_1^\beta)^2 n$,
$$
a^\beta>\frac{d^\beta}{d_1^\beta}a_1^\beta=
\frac{d^\beta d_1^\beta}{r_1}n>\frac{d^\beta p^2}{r_1 r}>\frac{d^\beta p^2}{-r}.
$$

We next assume that $r_1 d^\beta-rd_1^\beta>0$
and
$k=a_1^\beta d^\beta-a^\beta d_1^\beta>0$.
Then $kn \leq 1$. Hence
$$
a^\beta>\frac{a_1^\beta d^\beta-k}{d_1^\beta} \geq 
\frac{a_1^\beta d^\beta-1}{d_1^\beta}>-p.
$$

(2)
Assume that $\langle v,v_1 \rangle \leq 0$ for $v_1 \in D_v^\beta$ with 
$\langle v_1^2 \rangle=-2$.
We shall prove that
\begin{equation}\label{eq:aCM:remark4}
a^\beta > \min \left\{-r, -2(pd-qr) \right\}.
\end{equation}
If $r_1 d^\beta-rd_1^\beta>0$,
then by using the computation in \eqref{eq:aCM:remark1}, we see that  
$$
0 \geq \langle v,v_1 \rangle \geq -\frac{r}{r_1}-r_1 a^\beta.
$$
Hence $a^\beta>-r$.
If $r_1 d^\beta-rd_1^\beta<0$,
then the computation in \eqref{eq:aCM:remark2} implies that
$n(d_1^\beta)^2<1$.
Hence $-a_1^\beta=\frac{(d_1^\beta)^2n+1}{(-r_1)}<2$.
Therefore 
$$
a^\beta \geq \frac{d^\beta}{d_1^\beta}a_1^\beta > -2(pd-qr).
$$

By (1) and (2), our claim holds.
\end{proof}

\section{Examples}\label{sect:example}

The birational automorphims of moduli spaces
are extensively studied in \cite{BC} and explicit description of some examples
are done in \cite{FMM}.
In this section, 
let us give some examples of birational automorphisms
by using Theorem \ref{thm:n}.
We first recall the action of Fourier-Mukai transforms
on the Mukai lattice in \cite[sect. 2]{Mori-Y}.
We set
\begin{equation}\label{eq:A}
A=\begin{pmatrix}
q \sqrt{n} & c\\
-p & -q \sqrt{n}
\end{pmatrix},
\end{equation}
where $c,p,q \in {\Bbb Z}$ and
$\det A=q^2 n-pc=\pm 1$.
Then
$A$ defined an isometry $\phi_A$ of $H^*(X,{\Bbb Z})_{\alg}$ by
$\phi_A((x,yH,z))=(x',y' H,z')$:
\begin{equation}\label{eq:phi}
\begin{split}
\begin{pmatrix}
x' & y' \sqrt{n}\\
y' \sqrt{n} & z'
\end{pmatrix}=&
{^t} A \begin{pmatrix}
x & y \sqrt{n}\\
y \sqrt{n} & z
\end{pmatrix}
A\\
=&
\begin{pmatrix}
q^2 nx-2pqn y+p^2 z & cq x-(pc+q^2 n) y+pq z \\
cq  x-(pc+q^2 n) y+pq z & c^2 x-2cqn  y+q^2 nz
\end{pmatrix}.
\end{split}
\end{equation}
Since $A^2=-(\det A) I_2$, $\phi_A$ is an involution, where $I_2$ is the
identity matrix.

\begin{NB}
\begin{equation}
\begin{split}
&
\begin{pmatrix}
q\sqrt{n} & -p\\
c & -q \sqrt{n}
\end{pmatrix}
\begin{pmatrix}
r & d \sqrt{n}\\
d \sqrt{n} & a
\end{pmatrix}
\begin{pmatrix}
q\sqrt{n} & c \\
-p & -q \sqrt{n}
\end{pmatrix}\\
=& 
\begin{pmatrix}
q\sqrt{n} & -p\\
c & -q \sqrt{n}
\end{pmatrix}
\begin{pmatrix}
(rq-dp)\sqrt{n} & rc-dqn \\
dqn-ap & \left(d c-aq\right) \sqrt{n}
\end{pmatrix}\\
=& 
\begin{pmatrix}
q^2 nr-2dpqn+ap^2 & \left(rc-2dq^2 n+d+apq\right)\sqrt{n} \\
\left(rc-2dq^2 n+d+apq\right)\sqrt{n} & rc^2-2cqnd+aq^2 n 
\end{pmatrix}
\end{split}
\end{equation}
\end{NB}

\subsection{The case where $\det A=-1$.}\label{subsect:A=-1}
Assume that $\det A=-1$. 
We note that $M_H(p^2,-pq H,q^2 n) \cong X$.
Let ${\cal E} \in \Coh(X \times X)$ be the universal family of stable sheaves
with the Mukai vector $(p^2,-pqH,q^2 n)$, that is, we have an identification
\begin{equation}\label{eq:dual}
\begin{matrix}
X & \stackrel{\sim}{\longrightarrow} & M_H(p^2,-pqH,q^2 n) \\
x & \mapsto & {\cal E}_{|X \times \{x\} }. 
\end{matrix}
\end{equation}
Let us consider Fourier-Mukai transform
\begin{equation}\label{eq:kernel-A}
\Phi_{X \to X}^{{\cal E}}:{\bf D}(X) \to {\bf D}(X).
\end{equation}
Since
$$
v(\Phi_{X \to X}^{{\cal E}}({\cal E}_{|X \times \{ x \}}^{\vee}))=(0,0,1)=\phi_A(v({\cal E}_{|X \times \{ x \}}^{\vee})),
$$
we can take the universal family ${\cal E}$ to satisfy 
$$
(D_X \circ \Phi_{X \to X}^{{\cal E}})_{|H^*(X,{\Bbb Z})_{\alg}}=\phi_A
$$
(see \cite{Mori-Y}).
\begin{NB}
$$
v({\cal E}_{|\{ x \} \times X}^{\vee})=v(D_X \circ \Phi_{X \to X}^{{\cal E}}({\Bbb C}_x))=(p^2,pq H,q^2 n).
$$
\end{NB}

\begin{rem}\label{rem:stable-kernel} 
Let $\Phi_{X \to X}^{{\cal E}'}:{\bf D}(X) \to {\bf D}(X)$ be 
another Fourier-Mukai transform
which induces the same action on the Mukai lattice as $\Phi_{X \to X}^{{\cal E}}$.
Then ${\cal E}'_{|X \times \{ x \}}$ are simple complexes for all $x \in X$.
If they are simple sheaves for all $x \in X$, then
they are stable by \cite{Mukai:Tata}, and hence
we get ${\cal E'} \cong {\cal E}$.
Therefore ${\cal E}'$ is not a family of simple sheaves.
\end{rem}

\begin{lem}[{\cite[Remark 2.15]{Mori-Y}}]\label{lem:T(X)}
Let $T(X)$ be the transcendental lattice of $X$.
Then the action of $D_X \circ \Phi_{X \to X}^{{\cal E}}$ on $T(X)$ is
trivial.
\end{lem}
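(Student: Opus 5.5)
The plan is to use that $\Pic(X)={\Bbb Z}H$ forces the transcendental lattice to have odd rank. This leaves only $\pm 1$ as possible actions, and the sign is fixed by comparing with the known action $\phi_A$ on the algebraic part through the discriminant groups. Write $g$ for the cohomological action of $D_X \circ \Phi_{X \to X}^{{\cal E}}$ on $H^*(X,{\Bbb Z})$. The transform $\Phi_{X \to X}^{{\cal E}}$ acts on $H^*(X,{\Bbb Z})$ by a Hodge isometry of the Mukai lattice, given by the correspondence $v({\cal E})$ (Mukai). The dualizing functor $D_X$ acts by $(x_0,x_1,x_2) \mapsto (x_0,-x_1,x_2)$, which is also an isometry and preserves the Hodge structure up to the sign on $H^2$. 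Hence $g$ is an isometry of $H^*(X,{\Bbb Z})$. It preserves $H^*(X,{\Bbb Z})_{\alg}$, on which it equals $\phi_A$ by the choice of ${\cal E}$, and therefore preserves $T(X)=(H^*(X,{\Bbb Z})_{\alg})^\perp$. Moreover $g_{|T(X)}$ is a Hodge isometry of $T(X)$, because $\Phi^{{\cal E}}$ sends $H^{2,0}(X)$ to itself and $D_X$ acts on $H^2$ by $-1$.

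Next I will classify the Hodge isometries of $T(X)$. Since $\rho(X)=1$, $T(X)$ has rank $21$. The group of Hodge isometries of $T(X)$ acts faithfully on $H^{2,0}(X)$ by roots of unity $\zeta$, and $\zeta$ must have $\varphi(\mathrm{ord}\,\zeta)$ dividing $\rk T(X)=21$. As $\varphi(m)$ is even for $m>2$, this forces $\zeta=\pm 1$. Thus $g_{|T(X)}=\pm \mathrm{id}$.

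The remaining step, which I expect to be the main obstacle, is to determine the sign. I will use unimodularity of $H^*(X,{\Bbb Z})$. The lattices $N:=H^*(X,{\Bbb Z})_{\alg}$ and $T(X)$ are mutually orthogonal primitive sublattices of $H^*(X,{\Bbb Z})$, and the gluing gives an anti-isometry $N^*/N \cong T(X)^*/T(X)$. Since $g$ preserves both lattices, this anti-isometry intertwines $g_{|N}$ and $g_{|T(X)}$. Here $N \cong U \oplus {\Bbb Z}H$, so $N^*/N \cong {\Bbb Z}/2n$, generated by the class of $(0,\frac{1}{2n}H,0)$. I will compute $\phi_A$ on this generator using \eqref{eq:phi} and $pc=q^2 n\mp 1$; the $H$-coefficient of the image is $-(pc+q^2n)/2n$. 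The resulting sign $\pm 1$ on ${\Bbb Z}/2n$ then pins down $g_{|T(X)}$ whenever $2n>2$.

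This sign bookkeeping is delicate and is where an error is most likely. One must fix conventions: whether the identification \eqref{eq:dual} uses ${\cal E}$ or ${\cal E}^\vee$, and the transpose in $\phi_A$. One must also account for the sign that $D_X$ contributes on $H^2$, hence on $T(X)$. I would cross-check the sign independently, by computing $\Phi^{{\cal E}}_{|T(X)}$ directly as $\theta_{v}$ in Mukai's description $v^\perp/{\Bbb Z}v \cong H^2(M_H(v),{\Bbb Z})$ with $v=(p^2,-pqH,q^2n)$. The final step is to check that the two sign computations agree and give the identity, as claimed in \cite{Mori-Y}. The case $n=1$ needs separate treatment, since there ${\Bbb Z}/2n$ cannot detect the sign; there I would rely on the $\theta_v$ computation alone.
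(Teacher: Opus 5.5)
For $n\ge 2$ your route is the paper's. The paper combines the same two facts. First, Hodge isometries of the rank-$21$ lattice $T(X)$ are $\pm 1$; the paper cites Oguiso, and its phrase ``$O(T(X))$ consists of $\pm 1$'' really means Hodge isometries, so your check that $g_{|T(X)}$ is Hodge is the right one. Second, the sign is read off from the action of $\phi_A$ on $N^\vee/N\cong T(X)^\vee/T(X)\cong{\Bbb Z}/2n{\Bbb Z}$.

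The sign you left open is a one-line evaluation. None of the conventions you worry about enter: the sign of $D_X$ on $H^2$, ${\cal E}$ versus ${\cal E}^\vee$, and the transpose (both ${}^tAMA$ and $AM\,{}^tA$ have $y$-coefficient $-(pc+q^2n)$). This is because $g_{|N}=\phi_A$ is given and already contains all of that. The only convention that matters is what $\det A=-1$ means. The determinant of the displayed matrix $A$ is $pc-q^2n$; the paper's ``$\det A=q^2n-pc$'' is a sign slip. So $\det A=-1$ means $pc=q^2n-1$, which is what the paper's proof uses, and also the special case $c=\frac{q^2n-1}{2}$ and the formula for $a$ in Proposition \ref{prop:bir-auto}. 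Then $-(pc+q^2n)/2n=-q^2+\frac{1}{2n}$, so $\phi_A$ fixes the generator of ${\Bbb Z}/2n{\Bbb Z}$. Hence $g_{|T(X)}=-1$ would force $2n\mid 2$. With $pc=q^2n+1$ you would get the opposite answer, so your caution was justified.

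The step that fails is your fallback for $n=1$. You are right that the discriminant argument, and hence the paper's proof, says nothing there. But no $\theta_v$ computation can settle it, because the assertion itself then depends on the choice of identification \eqref{eq:dual}. For $n=1$ and $\Pic(X)={\Bbb Z}H$, $X$ is a double plane with covering involution $\iota$, and $\iota^*$ is the identity on $N$ and $-1$ on $T(X)$.

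Replacing ${\cal E}$ by $(1\times\iota)^*{\cal E}$ gives another universal family, realizing the identification $x\mapsto{\cal E}_{|X\times\{\iota(x)\}}$. It satisfies $\Phi_{X\to X}^{(1\times\iota)^*{\cal E}}=\iota^*\circ\Phi_{X\to X}^{{\cal E}}$. So $D_X\circ\Phi$ still acts on $N$ by $\phi_A$, but with the opposite sign on $T(X)$.

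Concretely, take $(p,q)=(1,0)$, $c=-1$. Then $\Phi_{X\to X}^{I_\Delta}$ is the spherical twist by ${\cal O}_X$ composed with $[-1]$, so it acts by $-1$ on $T(X)$, and $D_X\circ\Phi_{X\to X}^{I_\Delta}$ acts by $+1$ there. The kernel $I_{\Gamma_\iota}=(1\times\iota)^*I_\Delta$ gives $-1$ instead.

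So the correct conclusion is that the lemma holds for $n\ge 2$, which is exactly what the discriminant argument proves. That is all Proposition \ref{prop:bir-auto} uses, since there $n\ge 2rp+p^2\ge 3$. For $n=1$ the lemma holds only for a suitable choice of ${\cal E}$.
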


\begin{proof}
Since
$-(pc+q^2 n)=-2q^2 n+1 \equiv 1 \mod 2n$, $\phi_A$ acts trivially on 
$$
H^*(X,{\Bbb Z})_{\alg}^{\vee}/H^*(X,{\Bbb Z})_{\alg}=\NS(X)^{\vee}/\NS(X) \cong
{\Bbb Z}/2n{\Bbb Z}.
$$
Since 
$$
T(X)^{\vee}/T(X) \cong \NS(X)^{\vee}/\NS(X)
$$
and the isometry group $O(T(X))$ consists of $\pm 1$ (\cite[Lem. 4.1]{Oguiso}), the claim holds.
\end{proof}

\begin{prop}\label{prop:bir-auto}
Let $v=(r,dH,a)$ $(r>0)$ be a Mukai vector such that 
$$
a=\frac{2npqd-(q^2 n-1)r}{p^2}.
$$
Then the following claims hold:
\begin{enumerate}
\item[(1)]
$\phi_A(v)=v$.
\item[(2)]
If $n \geq 2rp+p^2$, then
$D_X \circ \Phi_{X \to X}^{{\cal E}}$ induces a birational involution of $M_H(v)$ which is non-symplectic.
\end{enumerate}
\end{prop}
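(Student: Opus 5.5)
Part (1) is a direct computation with \eqref{eq:phi}, using $\det A=q^2n-pc=-1$, i.e.\ $pc=q^2n+1$. The first entry of $\phi_A(v)$ is $q^2nr-2pqnd+p^2a$. Substituting the given value $p^2a=2npqd-(q^2n-1)r$ gives $q^2nr-2pqnd+2npqd-q^2nr+r=r$. For the off-diagonal entry we need $cqr-(pc+q^2n)d+pqa=d\sqrt{n}\cdot(\text{normalization})$, i.e.\ $y'=y$. Multiplying by $p$, using $pc=q^2n+1$ and substituting $p^2a$, this reduces to an identity. The last entry works the same way; alternatively, since $\phi_A$ is an isometry, $\langle v^2\rangle$ is preserved, so once $r$ and $d$ agree the remaining entry $a$ agrees as well, provided $r\neq 0$.

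For (2) we apply Theorem \ref{thm:n} with $X'=X$, $r_0=p^2$ (times the appropriate sign convention) and $\beta=-\frac{q}{p}H$ or $\frac{q}{p}H$, matching $M_H(p^2,-pqH,q^2n)=M_H(r_0e^{-\beta})$ with $r_0=p^2$ when $\gcd$ conditions allow. Strictly, Theorem \ref{thm:n} was set up for primitive $v_0=r_0e^{\beta}$; here $(p^2,-pqH,q^2n)=p^2e^{-\frac{q}{p}H}$ is primitive, so $\beta=\frac{q}{p}H$ and $r_0=p^2$. We check the hypotheses \eqref{eq:cond-v}.
\begin{itemize}
\item The condition $r>0$ is given.
\item The condition $\langle v^2\rangle\ge -2$ is harmless, since $M_H(v)\ne\emptyset$ is implicit.
\item For $pd-rq>0$, one replaces $v$ by a suitable sign or twist if needed; I would expect $d^\beta>0$ to follow from nonemptiness or to be assumed implicitly.
\item The key condition is $-\langle e^{\beta},v\rangle=a^\beta\ge 0$. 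Compute $a^\beta=a-2nd\frac{q}{p}+rn\frac{q^2}{p^2}=\frac{p^2a-2npqd+q^2nr}{p^2}=\frac{r}{p^2}>0$.
\end{itemize}
Since $n\ge 2rp+p^2=(2r+p)p$, Theorem \ref{thm:n} gives that $D_X\circ\Phi^{\cal E}_{X\to X}(E)\in M_H(v')$ for general $E$, with $v'=\phi_A(v)=v$ by (1). This yields a birational map $M_H(v)\dashrightarrow M_H(v)$. It is an involution because $\phi_A$ is an involution and the composite $(D_X\circ\Phi)^2$ is a Fourier–Mukai autoequivalence acting trivially on $H^*(X,\mathbb Z)_{\alg}$ (and on $T(X)$). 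On a Picard-rank-one K3 such a composite is, up to shift and tensoring with a trivial twist, the identity on generic stable objects; alternatively, one verifies directly via the Grothendieck–Verdier duality that $\Phi\circ D_X\circ\Phi$ returns $D_X$ of the inverse transform.

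Non-symplecticity is the remaining point. The action on $H^2(M_H(v))\supset v^\perp\supset T(X)$ is, via the Mukai isometry $\theta_v$, induced by the cohomological action of $D_X\circ\Phi$ restricted to $v^\perp$. By Lemma \ref{lem:T(X)} the action on $T(X)$ of $D_X\circ\Phi$ is trivial as a map on the lattice, but $D_X$ is contravariant and acts on $H^{2,0}$ by the conjugation-type sign: dualizing sends the Mukai vector $(r,c,a)\mapsto(r,-c,a)$. The trivial lattice action combined with the duality twist thus acts by $-1$ on $H^{2,0}(X)$ as it appears in $\theta_v$. Hence the symplectic form is sent to its negative. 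I expect this sign bookkeeping, namely how the contravariant functor's action on $T(X)$ translates into the action on $H^0(\Omega^2)$ of the moduli space, to be the main obstacle, together with confirming the involution property rigorously.
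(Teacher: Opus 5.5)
Your reduction of (2) to Theorem \ref{thm:n} matches the paper's: you take $r_0=p^2$ and $\beta=\frac{q}{p}H$, and compute $a^\beta=r/p^2>0$, so that $v'=\phi_A(v)=v$. The gap is in the non-symplecticity step, which is the real content of (2) beyond birationality. You assert that under $\theta_v$ the induced map on $H^2(M_H(v),{\Bbb Z})$ is the cohomological action of $D_X\circ\Phi_{X\to X}^{\cal E}$ on $v^\perp$. Combined with Lemma \ref{lem:T(X)} (trivial action on $T(X)$), this would make the map \emph{symplectic}. The extra sign you then take from ``dualizing flips $c_1$'' is double counting. $D_X$ is already part of the composite whose action Lemma \ref{lem:T(X)} computes; it is $\Phi_{X\to X}^{\cal E}$ alone that acts by $-1$ on $T(X)$.

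The missing point is that for a \emph{contravariant} functor the compatibility with the Mukai homomorphism carries a sign. Let ${\cal F}$ be a (quasi-)universal family on $X\times M_H(v)$. The family pulled back by the birational map is, up to twist, the relative transform of the derived dual ${\cal F}^\vee$. Dualizing multiplies $H^{2i}(X\times M_H(v))$ by $(-1)^i$, and $p_{M*}$ lowers degree by $4$, so the degree-$2$ component on $M_H(v)$ extracted by $\theta_v$ changes sign. Hence the induced action on $v^\perp\cong H^2(M_H(v),{\Bbb Z})$ is $-(D_X\circ\Phi_{X\to X}^{\cal E})_*$, as the paper states. That is $-\phi_A$ on the algebraic part and, by Lemma \ref{lem:T(X)}, $-1$ on $T(X)\subset v^\perp$. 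Since $H^{2,0}(M_H(v))=\theta_v(H^{2,0}(X))$ with $H^{2,0}(X)\subset T(X)\otimes{\Bbb C}$, the symplectic form goes to its negative.

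There are also three smaller points.

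\begin{enumerate}
\item[(a)] Your involution argument rests on a false general principle. Autoequivalences acting trivially on $H^*(X,{\Bbb Z})$ need not fix generic stable objects; squares of spherical twists are counterexamples. Your Grothendieck--Verdier alternative would need the kernel to be symmetric under swapping the factors of $X\times X$, which you do not check. A clean route, which the paper leaves implicit, is this: the induced action on $H^2$ is $-\phi_A|_{v^\perp}$, which squares to the identity, and $\mathrm{Bir}(M)\to O(H^2(M,{\Bbb Z}))$ is injective for $M$ of $K3^{[m]}$-type. Indeed, a birational self-map acting trivially on $H^2$ preserves an ample class, hence is an automorphism, hence is trivial.
\item[(b)] In (1), the relation you read off, $pc=q^2n+1$, has the wrong sign. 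The determinant of $A$ is actually $pc-q^2n$; the displayed formula has a slip, and the proof of Lemma \ref{lem:T(X)} uses $-(pc+q^2n)=-2q^2n+1$. So $\det A=-1$ means $pc=q^2n-1$. With your relation the $H$-coefficient of $\phi_A(v)$ comes out as $\frac{2qr}{p}-d$, not $d$. With the correct relation it is $d$, and the last entry is $a$.
\item[(c)] The condition $pd-qr>0$ does not follow from non-emptiness, since $\langle v^2\rangle=2n(d^\beta)^2-2r^2/p^2$ is large when $d^\beta\ll 0$. It is an implicit extra hypothesis needed to invoke Theorem \ref{thm:n}, which the paper also uses silently.
\end{enumerate}
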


\begin{proof}
(1) follows from \eqref{eq:phi}.

(2)
By Theorem \ref{thm:n}, $D_X \circ \Phi_{X \to X}^{{\cal E}}$ induces a birational map
$M_H(v) \cdots \to M_H(v)$.
Under the identification 
$$
v^\perp \cong H^2(M_H(v),{\Bbb Z}),
$$
the induced action on $v^\perp$ is the homomorphism $-D_X \circ \Phi_{X \to X}^{{\cal E}}$.
Hence it is a non-symplectic action.
\end{proof}

\begin{rem}
$v=e^{\frac{q}{p}H}(r,(d-\tfrac{q}{p}r)H,\tfrac{r}{p^2})$.
Hence $\langle v^2 \rangle/2=(d-\tfrac{q}{p}r)^2 n-\frac{r^2}{p^2}$.
\end{rem}

\begin{rem}
For the existence of a birational involution, we do not need the assumption $n \geq 2rp+p^2$ by the work of
Bayer and Macri \cite{BM}.  Our assumption is used to specify the kernel of $\Phi_{X \to X}^{{\cal E}}$. 
\end{rem}

\begin{ex}\label{ex:involution}
Assume that $p^2 \mid(q^2 n-1)$. 
Let $r>0$ and $d$ be integers satisfying 
$p \mid 2d$. 
Then 
$$
a:=\frac{2npqd-(q^2 n-1)r}{p^2} \in {\Bbb Z}.
$$
For $v:=(r,dH,a)$, we can apply Proposition \ref{prop:bir-auto}.
In particular we have a non-symplectic birational automorphism
of $X^{[\ell+1]}$, where $\ell=\frac{q^2 n-1}{p^2}$.
\end{ex}

\begin{NB}
Assume that $\ell p^2-n q^2=-1$.
If $p=3$, then
$p^2 \mid q^2 n-1$ iff $q \equiv \pm 1 \mod 9$ and $n \equiv 1 \mod 9$ or
$q \equiv \pm 2 \mod 9$ and $n \equiv 7 \mod 9$ or
$q \equiv \pm 4 \mod 9$ and $n \equiv 4 \mod 9$.
If $n \geq 15$, then
$D_X \circ \Phi_{X \to X}^{{\cal E}}$ induces a
birational automorphism of $X^{[\ell+1]}$ which is non-symplectic, where
$\ell=\frac{q^2 n-1}{9}$.
\end{NB}

\begin{NB}
$n=2t$. $n \geq 8$.
We set
$X'=M_H(2,p_2 H,p_2^2 t)$ $(p_2>0)$, where
$p_2 q_1 t-2q_1=1$, that is
$q_2=\frac{p_2 q_1 t-1}{2}$.
Then there is a universal family ${\cal E}$ which is locally free.

$v=(p_2^2 t,p_2 q_2 H,2q_2^2)-\ell(2,q_1 H,q_1^2 t)$.
For a general $E \in {\cal M}_{H'}(v)$, we have an exact sequence
$$
0 \to E \to E_0 \to \oplus_{i=1}^{\ell+1} {\cal E}_{|\{ x_i \} \times X'} \to 0
$$
where
$E_0 \in {\cal M}_{H'}(p_2^2 t+2,(p_2 q_2+q_1)H',2q_2^2+q_1^2 t)$.

We set $v=-(p_2^2 t,p_2 q_2 H,2q_2^2)+\ell(2,q_1 H,q_1^2 t)$.
For a general $E \in {\cal M}_{H'}(v)$, we have an exact sequence
$$
0 \to E_0 \to \oplus_{i=1}^{\ell+1} {\cal E}_{|\{ x_i \} \times X'} \to E \to 0.
$$
Indeed $\Phi_{X \to X'}^{\cal E}=\Phi_{X \to X'}^{{\cal F}^{\vee}}$, where ${\cal F}={\cal E}^{\vee}$. 

\end{NB}

\subsection{The case where $\det A=1$.}
Assume that $\det A=1$.
Then $v_0=(p,qH,c)$ satisfies $\langle v_0^2 \rangle=2(q^2 n-pc)=-2$.

\begin{lem}\label{lem:reflection}
$$
\phi_A(u)=-(u+\langle u,v_0 \rangle v_0),\; u \in H^*(X,{\Bbb Z})_{\alg}.
$$
Thus $-\phi_A$ is the reflection by $v_0$.
\end{lem}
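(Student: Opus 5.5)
Both sides of the identity are $\mathbb{Z}$-linear in $u$, so I would compare them on a $\mathbb{Q}$-basis adapted to $v_0$. Since $\langle v_0^2\rangle=-2\neq 0$, we have $H^*(X,\mathbb{Q})_{\alg}=\mathbb{Q}v_0\oplus v_0^{\perp}$. The claimed formula $-(u+\langle u,v_0\rangle v_0)$ sends $v_0\mapsto -(v_0-2v_0)=v_0$ and sends $w\mapsto -w$ for $w\in v_0^{\perp}$. So the statement is equivalent to two checks: $\phi_A(v_0)=v_0$, and $\phi_A|_{v_0^{\perp}}=-\mathrm{id}$. The phrase ``$-\phi_A$ is the reflection by $v_0$'' is then a restatement, because $u\mapsto u+\langle u,v_0\rangle v_0$ is the reflection in the $(-2)$-vector $v_0$.

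\textbf{Step 1: evaluate $\phi_A(v_0)$.} Substitute $(x,y,z)=(p,q,c)$ into \eqref{eq:phi} and use $q^2n-pc=1$. The first entry becomes $q^2np-2pq^2n+p^2c=p(pc-q^2n)=-p$. Treating the other two entries the same way gives $\phi_A(v_0)=-v_0$.

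\textbf{Step 2: evaluate $\phi_A$ on $v_0^{\perp}$.} The condition $\langle (x,yH,z),v_0\rangle=2nqy-cx-pz=0$ cuts out a rank-two sublattice. I would check two spanning vectors explicitly through \eqref{eq:phi}. A cleaner route uses the matrix model. Let $M_u$ be the symmetric matrix attached to $u$ in \eqref{eq:phi}, and let $J=\begin{pmatrix}0&1\\-1&0\end{pmatrix}$. Then $J^tM_{v_0}J$ is a multiple of $A$, and $u\perp v_0$ becomes a trace condition $\mathrm{tr}(M_uJ^tAJ)=0$. Together with $A^2=-I$, this lets one compute ${}^tAM_uA$ on that subspace.

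\textbf{Main obstacle: the sign.} Step 1 already gives $-v_0$, not $+v_0$. As a test case take $n=p=q=1$ and $c=0$, so $\det A=1$. Then $u=(1,0,0)$ lies in $v_0^{\perp}$ and \eqref{eq:phi} gives $\phi_A(u)=u$. With $\phi_A$ defined literally by \eqref{eq:phi}, the computation therefore yields $\phi_A(u)=u+\langle u,v_0\rangle v_0$, which is the overall sign opposite to the statement.

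To reach the formula exactly as worded, I would look for the convention in \cite[sect. 2]{Mori-Y} under which, when $\det A=1$, the isometry denoted $\phi_A$ is the action of $D_X\circ\Phi$ on the Mukai lattice. One candidate is the extra factor $-1$ coming from the contravariance of $D_X$ combined with the sign identification of the odd-degree part. I expect this to differ from \eqref{eq:phi} by $-1$. I would verify this against Lemma~\ref{lem:T(X)}: in that lemma the action on $\NS(X)^{\vee}/\NS(X)$ is multiplication by $-(pc+q^2n)$, which equals $-1$ modulo $2n$ when $\det A=1$. Under this convention Steps 1 and 2 give exactly $\phi_A=-(\mathrm{reflection})$, as stated.

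If no such convention can be found, the honest conclusion is that the lemma holds with the sign reversed, namely that $\phi_A$ itself is the reflection in $v_0$. That corrected version is what Steps 1 and 2 establish.
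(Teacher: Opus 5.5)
\textbf{There is a genuine gap.} Your sign ``obstacle'' comes from an inconsistent hypothesis, so the proposal ends up doubting a true statement and offering a false replacement.

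In Step 1 you use $q^2n-pc=1$, taking literally the slip ``$\det A=q^2n-pc$'' written next to \eqref{eq:A}. But the displayed matrix has $\det A=pc-q^2n$, consistent with $A^2=(q^2n-pc)I_2=-(\det A)I_2$. The hypothesis of this subsection is stated as $\langle v_0^2\rangle=2(q^2n-pc)=-2$, i.e.\ $pc=q^2n+1$. You rely on this same relation elsewhere: for the splitting $\mathbb{Q}v_0\oplus v_0^{\perp}$, and implicitly in your discriminant check, since with $pc=q^2n-1$ you would get $-(pc+q^2n)\equiv +1$, not $-1$, mod $2n$. Your test case $(n,p,q,c)=(1,1,1,0)$ has $\langle v_0^2\rangle=+2$ and $\det A=-1$, so it lies outside the lemma.

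With $pc-q^2n=1$, Step 1 gives $\phi_A(v_0)=(p(pc-q^2n),\,q(pc-q^2n),\,c(pc-q^2n))=v_0$. The identity then holds verbatim with $\phi_A$ defined by \eqref{eq:phi}. For example, for $(p,q,c)=(1,0,1)$ and $v_0=v(\mathcal{O}_X)$, \eqref{eq:phi} gives $\phi_A(x,y,z)=(z,-y,x)=-(u+\langle u,v_0\rangle v_0)$.

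The coefficient $-(pc+q^2n)$ in Lemma~\ref{lem:T(X)} is read off from \eqref{eq:phi} itself. Its being $-1$ mod $2n$ therefore confirms that \eqref{eq:phi} is already minus a reflection; there is no hidden extra sign to import from another convention. Your fallback $\phi_A(u)=u+\langle u,v_0\rangle v_0$ is false under either reading:
\begin{itemize}
\item with $pc=q^2n+1$ it sends $v_0$ to $-v_0\neq\phi_A(v_0)$;
\item with $pc=q^2n-1$ it sends $v_0$ to $3v_0$, contradicting your own Step 1. In that case \eqref{eq:phi} is $u\mapsto u-\langle u,v_0\rangle v_0$.
\end{itemize}

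Step 2 is also left unfinished, and the relation it rests on is wrong. The matrix ${}^tJM_{v_0}J$ is symmetric while $A$ is not, so it is not a multiple of $A$. The correct identity is $A=JM_{v_0}$, which gives ${}^tAM_uA=M_{v_0}\,({}^tJM_uJ)\,M_{v_0}=M_{v_0}\operatorname{adj}(M_u)M_{v_0}$. Apply $Q+\operatorname{adj}Q=(\operatorname{tr}Q)I_2$ to $Q=\operatorname{adj}(M_u)M_{v_0}$ and multiply on the left by $M_{v_0}$. This yields $M_{v_0}\operatorname{adj}(M_u)M_{v_0}=\operatorname{tr}(Q)\,M_{v_0}-\det(M_{v_0})\,M_u$, where $\det M_{v_0}=pc-q^2n=1$ and $\operatorname{tr}Q=cx+pz-2nqy=-\langle u,v_0\rangle$. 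That is exactly the lemma, with no need to split off $v_0^{\perp}$.

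The paper's own verification is just the direct coordinate expansion. Expand $u+\langle u,v_0\rangle v_0$, use $1-pc=-q^2n$, and compare entry by entry with \eqref{eq:phi}. For instance, the first entry is $(1-pc)x+2npqy-p^2z=-(q^2nx-2pqny+p^2z)$.
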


\begin{NB}
$\langle (x,yH,z),(p,qH,c) \rangle=2nqy-pz-cx$.
\begin{equation}
\begin{split}
u+\langle u,v_0 \rangle v_0=&
(x(1-cp)+2pqny-p^2 z,-(cqx-(1+2q^2 n)y+pqz)H,-c^2 x+2cqny+z(1-pc))\\
=& (-q^2 n x+2pqny-p^2 z,-(cqx-(pb+q^2 n)y+pqz)H,-c^2 x+2cqny-q^2 nz).
\end{split}
\end{equation}
\end{NB}

\begin{prop}
Assume that $p \mid q^2 n+1$ and $E_0$ be a stable rigid vector bundle 
$v(E_0)=(p,qH,\frac{q^2 n+1}{p})$.
Let $v=(r,dH,a)$ be a Mukai vector such that
$$
r \geq 0,\; pd-qr>0,\; -\langle v,v_0 \rangle>0
$$
 and $\langle v^2 \rangle \geq -2$. 
Assume that $n \geq (2r+p)p$.
Then
\begin{enumerate}
\item[(1)]
$\Ext^1(E_0,E)=0$ for a general $E \in {\cal M}_H(v)$.
\item[(2)]
$\coker(E^{\vee} \to \Hom(E_0,E)^{\vee} \otimes E_0^{\vee} )$
is semistable for a general $E \in {\cal M}_H(v)$. 
\end{enumerate}
\end{prop}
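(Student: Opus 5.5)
The plan is to reduce both claims to Theorem \ref{thm:n}, applied to the Fourier-Mukai transform whose kernel is built from the rigid bundle $E_0$. Since $v(E_0)=(p,qH,\frac{q^2n+1}{p})=v_0$ is a $(-2)$-vector, $M_H(v_0)$ is a single point, not a K3 surface. So I would not use Theorem \ref{thm:n} as a black box. Instead I would rerun its proof: the wall analysis of Lemmas \ref{lem:r_1<0}, \ref{lem:r_1>0} and Proposition \ref{prop:no-wall}, now along the vertical line $s=q/p$, with $E_0^{\vee}$ in place of the family ${\cal E}_{|X\times\{x'\}}^{\vee}$. Here $v(E_0^{\vee})=(p,-qH,\frac{q^2n+1}{p})$ lies on the "$\beta=\frac{q}{p}H$ side". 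The hypothesis $-\langle v,v_0\rangle>0$ plays the role of $-\langle e^{\beta},v\rangle\ge 0$.

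\textbf{Step 1 (no totally semistable wall).} Under $n\ge(2r+p)p$, the estimate in the proof of Theorem \ref{thm:n},
\[
(2r+p)p(d_1^\beta)^2-2rd_1^\beta-1+kr_1^2>0,
\]
shows that no $v_1\in D_v^\beta$ with $\langle v_1^2\rangle=-2$ and $r_1d^\beta-rd_1^\beta>0$ has $\langle v,v_1\rangle<0$. Lemma \ref{lem:r_1<0} handles the case $r_1d^\beta-rd_1^\beta<0$. Lemma \ref{lem:r_1>0}(2) handles the isotropic $v_1$. Proposition \ref{prop:birational} then gives that a general $E\in{\cal M}_H(v)$ is $\sigma_{(sH,tH)}$-semistable for $(s,t)\in U_+^\beta(\nu)$, and by Lemma \ref{lem:stability} this persists down to the chamber adjacent to the wall $C$ where $\phi(E_0^\vee[1])$ and $\phi(E)$ coincide. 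In the $(-2)$ case the relevant wall is precisely the one defined by $v_1=v(E_0^{\vee}[1])$ or its twist. The inequality $-\langle v,v_0\rangle>0$ is what places $E_0$ on the correct side of this wall.

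\textbf{Step 2 (claim (1)).} For general $E$, $\sigma$-semistable just above the wall, I would compare phases with $E_0$ (or $E_0^{\vee}[1]$ after dualizing), which is $\sigma$-stable there by the analogue of Lemma \ref{lem:stability-cond} for rigid bundles. This comparison forces vanishing of one of $\Hom(E_0,E[1])$ or $\Hom(E,E_0[1])$. Serre duality interchanges these two groups, so one phase inequality gives $\Ext^1(E_0,E)=0$. Then $\chi(E_0,E)=-\langle v_0,v\rangle>0$ gives $\dim\Hom(E_0,E)=-\langle v,v_0\rangle$, since $\Ext^2(E_0,E)=\Hom(E,E_0)^*=0$ by slope/phase.

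\textbf{Step 3 (claim (2)).} The evaluation map $\Hom(E_0,E)\otimes E_0\to E$ corresponds, after applying $D_X$, to $E^{\vee}\to\Hom(E_0,E)^{\vee}\otimes E_0^{\vee}$. Its cone is the image of $E$ under $D_X\circ R_{E_0}$, which lifts $-\phi_A$ (Lemma \ref{lem:reflection}). Crossing the wall $C$ replaces $E$ by this cone, which is stable in the chamber on the other side. Because $E$ is general and $\Ext^1$ vanishes, I expect the cone to be a sheaf, namely the cokernel, and the map $E^\vee\to\Hom(E_0,E)^{\vee}\otimes E_0^{\vee}$ to be injective. The remaining task is to show that the chamber on the far side of $C$ is the Gieseker chamber for the reflected vector $-\phi_A(v)$, up to the dual. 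I would do this by transporting $U_+^\beta$ under the reflection, as in Proposition \ref{prop:isom}.

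The main obstacle is Step 3. The identification of the chamber beyond the wall with a Gieseker chamber, and the claim that the cone is concentrated in one degree, require an analogue of Proposition \ref{prop:isom} for a spherical twist rather than a fine moduli space. I would first try to deduce it from \cite{MYY:2011:2}, which treats such reflections.
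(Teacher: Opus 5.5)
Your proposal has a genuine gap, and it comes from the opening premise. $M_H(v_0)$ is indeed a point. But Theorem \ref{thm:n} is not meant to be applied to $M_H(v_0)$: the shifted spherical twist by $E_0$ is itself a Fourier--Mukai transform by a family of stable sheaves. Put ${\cal E}:=\ker(E_0^{\vee}\boxtimes E_0\to{\cal O}_\Delta)$. Then $\Phi_{X\to X}^{\cal E}(F)=\mathrm{Cone}({\bf R}\Hom(E_0,F)\otimes E_0\to F)[-1]$, i.e.\ $\Phi^{\cal E}_{X\to X}=T_{E_0}[-1]$. Its fibres ${\cal E}_{|X\times\{x\}}=\ker(E_0^{\vee}\otimes(E_0)_x\to{\Bbb C}_x)$ are stable sheaves with Mukai vector $(p^2,-pqH,q^2n)=p^2e^{-\frac qpH}$. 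This vector is isotropic, and primitive because $p\mid q^2n+1$ forces $\gcd(p,qn)=1$. So ${\cal E}$ is a universal family for $X'=M_H(p^2e^{-\frac qpH})\cong X$, and Theorem \ref{thm:n} applies verbatim with $r_0=p^2$; this is the paper's argument. In particular, the ``analogue of Proposition \ref{prop:isom} for a spherical twist'' that you leave open in Step 3 is simply Proposition \ref{prop:isom} for this ${\cal E}$. Without it, Step 3, which carries all of claim (2), is not proved.

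Once Theorem \ref{thm:n} is available, both claims are formal. For general $E$, $G:=\Phi^{\cal E}_{X\to X}(E)^{\vee}$ is a semistable sheaf. Hence $\Phi^{\cal E}_{X\to X}(E)=G^{\vee}$ has ${\cal H}^1={\cal E}xt^1(G,{\cal O}_X)$ torsion and ${\cal H}^2=0$. The triangle $\Phi^{\cal E}_{X\to X}(E)\to{\bf R}\Hom(E_0,E)\otimes E_0\to E$ gives a surjection ${\cal H}^1\to\Ext^1(E_0,E)\otimes E_0$ and an isomorphism ${\cal H}^2\cong\Ext^2(E_0,E)\otimes E_0$. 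Since $E_0$ is locally free, both Ext groups vanish, which is (1). No phase comparison with $E_0[1]$ is needed, so you also avoid proving its stability in the relevant chamber. Then $G=\mathrm{Cone}(E^{\vee}\to\Hom(E_0,E)^{\vee}\otimes E_0^{\vee})$ is the cokernel in (2).

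There are two further corrections.

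First, your wall picture conflates two walls. $C$ is where $E$ and ${\cal E}^{\vee}_{|X\times\{x\}}[1]$ have equal phase. That object has the isotropic class $-p^2e^{\frac qpH}$, and $C$ passes through $(\frac qp,0)$. The wall of $E_0[1]$ (class $-v_0$) is a different semicircle, meeting $s=\frac qp$ at $t=\frac1{p\sqrt n}$. Moreover, nothing is crossed at the end. The map $E\mapsto\Phi^{\cal E}_{X\to X}(E)^{\vee}$ identifies ${\cal M}_\sigma(v)$ with ${\cal M}_H(v')$ for $\sigma$ in the chamber ${\cal C}$ adjacent to $C$ on the side where $\phi({\cal E}^{\vee}_{|X\times\{x\}}[1])>\phi(E)$.

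Second, $-\langle v,v_0\rangle>0$ does not play the role of $-\langle e^{\frac qpH},v\rangle\ge 0$. One has $-\langle v,v_0\rangle=pa^\beta+\frac rp$, so your hypothesis allows $a^\beta<0$. In that range:
\begin{itemize}
\item Lemmas \ref{lem:r_1<0}, \ref{lem:r_1>0} and Proposition \ref{prop:birational}, which your Step 1 invokes, are unavailable;
\item the cone in (2) would have rank $p\dim\Hom(E_0,E)-r=p^2a^\beta<0$.
\end{itemize}
Both your argument and the paper's need $a^\beta\ge 0$, as in Theorem \ref{thm:n}.
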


\begin{NB}
$$
{\cal E}:=\ker(E_0^{\vee} \boxtimes E_0 \to {\cal O}_\Delta)
$$
is the universal family.
If $n \geq 2rp+p^2$ and $-\langle e^{\frac{q}{p}H},v \rangle>0$, then
%$\Ext^1(E_0,E)=0$ for a general $E \in {\cal M}_H(r,dH,a)$.
$$
\Phi_{X \to X}^{{\cal E}}(E)^{\vee}=\mathrm{Cone}(E^{\vee} \to \Hom(E_0,E)^{\vee} \otimes E_0^{\vee})
$$ 
is a stable sheaf for a general
$E \in {\cal M}_H(v)$.
\end{NB}

\subsection{A special case.}

In the notation of \eqref{eq:A},
we assume that $p=2$ and $c:=\frac{q^2 n-1}{2} \in {\Bbb Z}$.
Then
$$
A=\begin{pmatrix}
 q\sqrt{n} & \frac{q^2 n-1}{2}\\
-2 & - q\sqrt{n}
\end{pmatrix}.
$$
is a matrix with $\det A=-1$.
In this case we shall explain that the universal family ${\cal E}$ of stable sheaves defining \eqref{eq:kernel-A} is
a non-locally free sheaf and $\Phi_{X \to X}^{{\cal E}}$ is related to a $(-2)$-reflection.
We set
$$
B=\begin{pmatrix}
 q\sqrt{n} & \frac{q^2 n+1}{2}\\
-2 & - q\sqrt{n}
\end{pmatrix}.
$$
Then $\det B=1$,
$v_0=(2,qH,\frac{q^2 n+1}{2})$ is a $(-2)$-vector
and $-\phi_B$ is the reflection by $v_0$ (Lemma \ref{lem:reflection}).
We see that
$$
B \begin{pmatrix}
1 & -q \sqrt{n}\\
0 & 1
\end{pmatrix}
\begin{pmatrix}
1 & 0 \\
0 & -1
\end{pmatrix}=A,
$$
that is,
$$
e^{-qH} \phi_B (u)=\phi_A(u)^{\vee},\; u \in H^*(X,{\Bbb Z})_{\alg}.
$$
Let $E_0$ be a stable sheaf with $v(E_0)=v_0$.
The we get
$$
{\cal E} \cong \ker(E_0^{\vee} \boxtimes E_0 (-qH) \to {\cal O}_\Delta (-qH)).
$$
In particular, the birational map $M_H(v) \cdots \to M_H(v)$ in Proposition \ref{prop:bir-auto} is given by
$$
E \mapsto \coker(E^{\vee}(qH) \to \Hom(E_0,E) \otimes E_0^{\vee}(qH))
$$
(cf. \cite{FMM}).

\begin{NB}

Assume that $p \mid (q^2 n-1)$.
$$
C:=\begin{pmatrix}
 q\sqrt{n} & \frac{q^2 n-1}{p}\\
-p & - q\sqrt{n}
\end{pmatrix}.
$$
Then $\det C=-1$.

\begin{equation}
\begin{split}
&
\begin{pmatrix}
q\sqrt{n} & -p\\
\frac{q^2 n-1}{p} & -q \sqrt{n}
\end{pmatrix}
\begin{pmatrix}
r & d \sqrt{n}\\
d \sqrt{n} & a
\end{pmatrix}
\begin{pmatrix}
q\sqrt{n} & \frac{q^2 n-1}{p} \\
-p & -q \sqrt{n}
\end{pmatrix}\\
=& 
\begin{pmatrix}
q\sqrt{n} & -p\\
\frac{q^2 n-1}{p} & -q \sqrt{n}
\end{pmatrix}
\begin{pmatrix}
(rq-dp)\sqrt{n} & r\frac{q^2 n-1}{p}-dqn \\
dqn-ap & \left(d \frac{q^2 n-1}{p}-aq\right) \sqrt{n}
\end{pmatrix}\\
=& 
\begin{pmatrix}
q^2 nr-2dpqn+ap^2 & \left(\frac{q(q^2 n-1)r}{p}-2dq^2 n+d+apq\right)\sqrt{n} \\
\left(\frac{q(q^2 n-1)r}{p}-2dq^2 n+d+apq\right)\sqrt{n} & 
\end{pmatrix}
\end{split}
\end{equation}

%Then the action of $D_X \circ \Phi_{X \to X}^{{\cal E}}$ is $\phi_C$.

\end{NB}

\begin{NB}

\begin{prop}
Assume that $n \geq 8$ and $n \equiv 1 \mod 4$.
Let $E_0$ be a stable vector bundle such that $v(E_0)=(2,qH,2\ell+1)$ with
$q<0$ and
$\ell=\frac{q^2 n-1}{4} \in {\Bbb Z}$.
Then 
we have a non-symplectic 
birational automorphism $\varphi$ of $X^{[\ell+1]}$:
\begin{equation}
\begin{matrix}
\varphi:& X^{[\ell+1]} & \to & X^{[\ell+1]}\\
& I_Z & \mapsto & \coker(I_Z^{\vee}(qH) \to E_0^{\vee}(qH)).
\end{matrix}
\end{equation} 
\begin{NB2}
$v(E_0)-v(I_Z)=(1,qH,3 \ell+1)=(1,0,-\ell)e^{qH}$.
\end{NB2}
\end{prop}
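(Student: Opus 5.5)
The plan is to realize this as the case $p=2$, $r=1$, $d=0$ of Proposition \ref{prop:bir-auto} (equivalently Example \ref{ex:involution}), and then to identify the resulting map through the explicit kernel of the special case $p=2$ treated above.

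\textbf{Numerical reduction.} Since $n\equiv 1 \bmod 4$ and $\ell=\frac{q^2n-1}{4}\in{\Bbb Z}$, the integer $q$ is odd. Hence $p^2=4\mid q^2n-1$, and $c=\frac{q^2n-1}{2}\in{\Bbb Z}$.
\begin{enumerate}
\item[(a)] In the special case above we have $v_0=(2,qH,\frac{q^2n+1}{2})=(2,qH,2\ell+1)=v(E_0)$, which is a $(-2)$-vector.
\item[(b)] Take $r=1$ and $d=0$ in Proposition \ref{prop:bir-auto}. The formula there gives $a=-\frac{q^2n-1}{4}=-\ell$. So $v=(1,0,-\ell)$, which is the Mukai vector of $I_Z$ with $Z\in X^{[\ell+1]}$, and $M_H(v)=X^{[\ell+1]}$.
\item[(c)] The hypothesis $n\ge 2rp+p^2$ becomes $n\ge 8$, which is assumed.
\item[(d)] For Theorem \ref{thm:n}, which underlies Proposition \ref{prop:bir-auto}, I would check \eqref{eq:cond-v} directly:
\begin{itemize}
\item $pd-rq=-q>0$ because $q<0$;
\item $v=e^{\frac q2 H}(1,-\frac q2 H,\frac14)$, so $-\langle e^{\frac q2 H},v\rangle=a^\beta=\frac14\ge 0$;
\item $\langle v^2\rangle=2\ell\ge -2$.
\end{itemize}
\end{enumerate}
Therefore $D_X\circ\Phi_{X\to X}^{{\cal E}}$ induces a birational involution of $X^{[\ell+1]}$. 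It is non-symplectic because it acts on $v^\perp\cong H^2(X^{[\ell+1]},{\Bbb Z})$ by $-\phi_A$, as in the proof of Proposition \ref{prop:bir-auto}.

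\textbf{Explicit form of the map.} By the special-case discussion, ${\cal E}\cong\ker(E_0^\vee\boxtimes E_0(-qH)\to{\cal O}_\Delta(-qH))$. From this, the birational map is
\[
E\mapsto \coker\bigl(E^\vee(qH)\to \Hom(E_0,E)\otimes E_0^\vee(qH)\bigr).
\]
For $E=I_Z$ we have $\chi(E_0,I_Z)=-\langle v_0,v\rangle=1$.
\begin{itemize}
\item $\Ext^2(E_0,I_Z)=\Hom(I_Z,E_0)^\vee$. This vanishes because $E_0$ is $\mu$-stable of slope $\frac{q}{2}(H^2)<0$, so it admits no nonzero map from the slope-$0$ sheaf $I_Z$.
\item $\Ext^1(E_0,I_Z)=0$ for general $Z$. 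This is part (1) of the preceding proposition with $p=2$, $r=1$, whose hypotheses we checked in step (d).
\end{itemize}
Hence $\Hom(E_0,I_Z)\cong{\Bbb C}$. Since $I_Z^\vee={\cal O}_X$, the map becomes $I_Z^\vee(qH)={\cal O}_X(qH)\to E_0^\vee(qH)$, which is the formula in the statement.

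\textbf{Main obstacle.} The delicate point is the kernel identification ${\cal E}\cong\ker(E_0^\vee\boxtimes E_0(-qH)\to{\cal O}_\Delta(-qH))$. One must check that this sheaf is a family of stable sheaves with Mukai vector $(p^2,-pqH,q^2n)=(4,-2qH,q^2n)$. One must also check that, after normalization, it induces $\phi_A$. The second point follows from the matrix identity $B\begin{pmatrix}1&-q\sqrt n\\0&1\end{pmatrix}\begin{pmatrix}1&0\\0&-1\end{pmatrix}=A$. Once both are known, Remark \ref{rem:stable-kernel} shows that the kernel is uniquely determined, and this pins down the map.
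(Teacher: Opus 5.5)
Your numerical reduction is the route the paper intends. The statement is the case $p=2$, $r=1$, $d=0$ of Example \ref{ex:involution}, with the kernel from the special case $p=2$. Your checks are right: $v=(1,0,-\ell)$, $a^\beta=\frac14$, $\langle v,v_0\rangle=-1$, $n\ge 8$.

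The step that fails is the last one, \emph{since $I_Z^\vee={\cal O}_X$}. Consider the formula $E\mapsto\coker(E^\vee(qH)\to\Hom(E_0,E)\otimes E_0^\vee(qH))$. It comes from applying $D_X$ to the triangle induced by ${\cal E}\to E_0^\vee\boxtimes E_0(-qH)\to{\cal O}_\Delta(-qH)$. So $E^\vee$ there is the derived dual $D_X(E)$, and $\coker$ means a cone. If you use the sheaf dual instead, then ${\cal O}_X(qH)\to E_0^\vee(qH)\cong E_0$ is the Koszul map of the section $s\in H^0(E_0^\vee)$ spanning $\Hom(E_0,I_Z)$. Its cokernel has Mukai vector $v(E_0)-v({\cal O}_X(qH))=(1,0,-2\ell-1)$. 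Generically it is $I_W$, where $W=Z(s)\supset Z$ has length $c_2(E_0)=2\ell+2$. So it is not a point of $X^{[\ell+1]}$, and the formula you "verify" is not the one in the statement.

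The missing ingredient is $\mathcal{H}^1(D_X(I_Z))={\cal E}xt^1(I_Z,{\cal O}_X)$, which has length $\ell+1$. It is what makes $v(I_Z^\vee(qH))=(1,0,-\ell)e^{qH}$, which is exactly the check in the note attached to the statement. Keeping it, $\mathcal{H}^0$ of $D_X(I_Z)(qH)\to E_0^\vee(qH)$ is the injective map $s^\vee$. Hence the cone is a sheaf $F$ with
\[
0\to \coker(s^\vee)\to F\to {\cal E}xt^1(I_Z,{\cal O}_X)(qH)\to 0,\qquad v(F)=(1,0,-\ell).
\]
For general $Z$, $\coker(s^\vee)=I_W$, and $F$ is torsion free by Theorem \ref{thm:n}. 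Then $F=I_{Z'}$ with $Z'$ the residual of $Z$ in $W$. So $\varphi$ sends $Z$ to its residual in the zero scheme of the section of $E_0^\vee$ (unique up to scalar) vanishing on $Z$. With this correction your argument goes through.

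A smaller point on non-symplecticity. The relevant input is Lemma \ref{lem:T(X)}: $-D_X\circ\Phi_{X\to X}^{\cal E}$ acts by $-1$ on $T(X)\subset v^\perp$. The map $-\phi_A$ only describes the action on the algebraic part, so it cannot by itself show the action is non-symplectic.
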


%If $p^2 \ell-q^2 n=1$ and $n \geq p^2$, then $\Phi_{X \to X}^{{\cal E}[1]}$ induces a birational 
%automprphism of $X^{[\ell+1]}$.

\end{NB}

\subsection{An Example on a K3 surface with $n=1$.}\label{subsect:n=1}

As an example for the case of $n < 2rp+p^2$,
we assume that $X$ is a K3 surface with $n=1$
and treat the case where $(p,q)=(1,0)$, that is $\beta=0$.
For the ideal sheaf ${\cal E}:=I_\Delta$ of the diagonal $\Delta \subset X \times X$,
we have an identification \eqref{eq:dual} and an autoequivalence $\Phi_{X \to X}^{{\cal E}}$ of ${\bf D}(X)$.

We take a Mukai vector $v=(r,dH,a)$ with $r,d,a>0$.
Then
for an isotropic vector $v_1=(r_1,d_1 H,a_1) \in D_v^\beta$,
Lemma \ref{lem:r_1<0} and Lemma \ref{lem:r_1>0} imply that
$\langle v,v_1 \rangle \geq 3$ unless $n=1$, $v=(r,(r+1)H,r)$ and $v_1=(1,H,1)$.
If $0<d-r<\frac{1}{2}r$, then 
$(1,H,2)$ and $(2,H,1)$ define totally semistable walls for $v=(r,dH,r)$.
Hence $D_X \circ \Phi_{X \to X}^{{\cal E}}$ does not induce a birational automorphism of $M_H(v)$.
By \cite{BM}, there is a (contravariant) equivalence 
$\Psi:{\bf D}(X) \to {\bf D}(X)$ such that $D_X \circ \Phi_{X \to X}^{{\cal E}} \circ \Psi$  
induces a birational automorphism of $M_H(v)$.

Assume that $d \geq r+2$. 
By the following lemma, $\Psi$ acts trivially on $H^*(X,{\Bbb Z})$ unless $v$ is a multiple
of $(2,3H,2)$.
In particular the induced action of the birational automorphism on $v^\perp \cong H^2(M_H(v),{\Bbb Z})$ is
$-D_X \circ \Phi_{X \to X}^{{\cal E}}$. 
Obviously the kernel of $\Phi_{X \to X}^{{\cal E}} \circ \Psi$ is not a family of
stable sheaves on $X$ (Remark \ref{rem:stable-kernel}).

\begin{lem}
Let $v=(r,dH,r)$ $(r,d>0)$ be a primitive Mukai vector. 
Assume that $\langle v_1,v \rangle=0$ for $v_1 \in D_v^\beta$ with
$\langle v_1^2 \rangle=-2$.
Then $v=(2,3H,2)$ and $v_1=(1,H,2),(2,H,1)$.
\end{lem}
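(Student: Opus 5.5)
The plan is to specialize everything to $n=1$, $(p,q)=(1,0)$, so that $\beta=0$, $d^\beta=d$ and $a^\beta=a=r$. Write $v_1=(r_1,d_1H,a_1)\in D_v^\beta$ with $\langle v_1^2\rangle=-2$, i.e.
$$
r_1a_1=d_1^2+1,\qquad 0<d_1\le d,\qquad \frac{a_1d-rd_1}{r_1d-rd_1}>0 .
$$
Since $\langle v,v_1\rangle=2dd_1-r(r_1+a_1)$, the hypothesis becomes $r(r_1+a_1)=2dd_1$. I would split according to the sign of $r_1d-rd_1$, which is nonzero because it is the denominator in the definition of $D_v^\beta$.

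\emph{Case $r_1d-rd_1<0$.} First reduce to $\langle v^2\rangle\ge 0$. Here $\langle v^2\rangle=2(d^2-r^2)$, and for a nonempty moduli space we need $\langle v^2\rangle\ge -2$. If $d<r$, then $d^2-r^2\le -(2r-1)$, which is $\ge -1$ only for $r=1$, $d=0$; this is excluded since $d>0$. Hence $d\ge r$ and $\langle v^2\rangle\ge0$. Lemma \ref{lem:r_1<0} (1) with $n=p^2=1$ then gives $\langle v,v_1\rangle\ge2$, contradicting $\langle v,v_1\rangle=0$. The only point to check is that this reduction to $\langle v^2\rangle\ge0$ is legitimate, i.e. that the lemma is meant for $v$ with $\langle v^2\rangle\ge -2$; that is the standing assumption of this subsection.

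\emph{Case $r_1d-rd_1>0$.} Then both numerator and denominator are positive. So $r_1>rd_1/d$ and $a_1>rd_1/d$, with $r_1,a_1>0$. Put $x=r_1$ and $y=a_1$. The wall equation gives $r/d=2d_1/(x+y)$, so the two inequalities read
$$
x(x+y)>2d_1^2 \quad\text{and}\quad y(x+y)>2d_1^2 .
$$
Using $xy=d_1^2+1$, these become $x^2>d_1^2-1$ and $y^2>d_1^2-1$. Since $x,y$ are positive integers, this yields $x,y\ge d_1$.

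Combined with $xy=d_1^2+1$, we get $d_1\le x\le (d_1^2+1)/d_1<d_1+1$. Hence $x=d_1$ and $y=d_1+1/d_1$, and integrality forces $d_1=1$. So $v_1=(1,H,2)$ or $(2,H,1)$ (the two roles of $x,y$ are symmetric).

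Then $x+y=3$, and the wall equation gives $3r=2d$. Primitivity of $v=(r,dH,r)$ forces $r=2$, $d=3$, so $v=(2,3H,2)$. This matches the claim, and it is consistent with $d_1\le d$ and with the sign conditions. The main (mild) obstacle is the first case, namely making sure Lemma \ref{lem:r_1<0} applies; the second case is an elementary integer argument.
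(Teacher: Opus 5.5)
Your proof is correct, and in the main case it takes a different, more elementary route than the paper.

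\textbf{Case $r_1d-rd_1<0$.} You and the paper use the same input here, Lemma \ref{lem:r_1<0}(1). As you note, it needs $\langle v^2\rangle\ge 0$, i.e.\ $d\ge r$. The paper's proof relies on this implicitly too, since it cites that lemma to get $r_1,a_1>rd_1/d$.

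\textbf{Case $r_1d-rd_1>0$.} The paper uses primitivity immediately. From $\gcd(r,d)=1$ and $2dd_1=r(r_1+a_1)$ it gets $r\mid 2d_1$, writes $d_1=kr$ or $d_1=kr'$ with $r=2r'$, and splits by the parity of $r$ and of $k$. In each subcase it uses $r_1,a_1>rd_1/d$ to bound $r_1,a_1$ above by $kd$ or $(kd\pm1)/2$, then solves $r_1a_1=d_1^2+1$.

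You instead substitute $r/d=2d_1/(r_1+a_1)$ into the lower bounds. With $r_1a_1=d_1^2+1$ this gives $r_1^2>d_1^2-1$ and $a_1^2>d_1^2-1$, hence $r_1,a_1\ge d_1$. That pins down $d_1=1$ and $\{r_1,a_1\}=\{1,2\}$ with no parity analysis. It also shows primitivity is needed only at the end, to pass from $3r=2d$ to $v=(2,3H,2)$; without it one gets the multiples of $(2,3H,2)$.

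\textbf{A slip to fix.} The inequality $(d_1^2+1)/d_1<d_1+1$ is false for $d_1=1$, and that is exactly the case that produces the solutions. As written, the same chain applied to $y$ would give $y=d_1$ as well and rule out everything. Fix it by assuming $x\le y$. Then $x^2\le xy=d_1^2+1<(d_1+1)^2$ gives $x=d_1$, and integrality of $y=(d_1^2+1)/d_1$ forces $d_1=1$.

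\textbf{Your trick also removes the need for Lemma \ref{lem:r_1<0}.} Suppose $r_1d-rd_1<0$.
\begin{itemize}
\item If $r_1<0$, then $a_1<0$, so $\langle v,v_1\rangle=2dd_1-r(r_1+a_1)>0$, a contradiction.
\item If $r_1>0$, then $a_1>0$ and also $a_1d-rd_1<0$. The same substitution gives $r_1^2<d_1^2-1$ and $a_1^2<d_1^2-1$, so $r_1a_1\le(d_1-1)^2<d_1^2+1$, again a contradiction.
\end{itemize}
So the statement holds exactly as written, with no hypothesis on $\langle v^2\rangle$. Neither your proof nor the paper's as given achieves that.
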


\begin{proof}
We note that $\gcd(r,d)=1$.
We assume that $\langle v_1,v \rangle=0$ for $v_1=(r_1,d_1 H,a_1) \in D_v^\beta$ with
$\langle v_1^2 \rangle=-2$.
Then 
\begin{equation}\label{eq:vv_1}
d_1^2+1=r_1 a_1,\;
2dd_1=r(a_1+r_1).
\end{equation}
By Lemma \ref{lem:r_1<0}, we get
\begin{equation}\label{eq:r_1a_1}
r_1>\frac{r d_1}{d},\; a_1 >\frac{r d_1}{d}.
\end{equation}

(1) We first treat the case where $r$ is odd.
Then we get 
\begin{equation}
d_1=kr,\; r_1+a_1=2kd\;\; (k \in {\Bbb Z}).
\end{equation}
%By \eqref{eq:r_1a_1},
%$r_1 \geq \frac{kr^2+1}{d}$ and $a_1\geq \frac{kr^2+1}{d}$.
If $r_1 \geq kd+1$, then by using \eqref{eq:r_1a_1}, we see that
$$
r_1 a_1\geq (kd+1)a_1=kda_1+a_1>krd_1+a_1=d_1^2+a_1
%(kd+1)\frac{kr^2+1}{d}=kr^2+k+\frac{kr^2+1}{d} >d_1^2+1.
$$
which contradicts \eqref{eq:vv_1}. 
Hence $r_1 \leq kd$. We also have $a_1 \leq kd$.
By $r_1+a_1=2kd$, $r_1=a_1=kd$.
Then $(kd)^2=(kr)^2+1$, and hence $r=0$ and $kd=1$, which is a contradiction.

(2) We next assume that $r$ is even.
Since $\gcd(r,d)=1$, $d$ is odd.
We set $r=2r'$.
Then 
\begin{equation}
d_1=kr',\; r_1+a_1=kd\; (k \in {\Bbb Z}). 
\end{equation}
By \eqref{eq:r_1a_1},
$r_1 \geq \frac{krr'+1}{d}$ and $a_1\geq \frac{krr'+1}{d}$.
Assume that $k$ is odd.
If $r_1 \geq \frac{kd+3}{2}$, then by using \eqref{eq:r_1a_1}, we see that
\begin{equation}
r_1 a_1\geq \left(\frac{kd+1}{2}+1 \right)a_1=
\left(\frac{kd+1}{2}\right)a_1+a_1>\frac{krd_1}{2}+a_1=d_1^2+a_1,
%\geq \left(\frac{kd+1}{2}\right)\frac{krr'+1}{d}+a_1>
%(kr')^2+1.
\end{equation}
which contradicts \eqref{eq:vv_1}. 
Hence $r_1 \leq \frac{kd+1}{2}$. We also have $a_1 \leq \frac{kd+1}{2}$.
By $r_1+a_1=kd$, we see that
$\{r_1,a_1 \}=\{ \frac{kd-1}{2},\frac{kd+1}{2}\}$.
Hence we get 
$$
d_1^2+1=r_1 a_1=\frac{(kd)^2-1}{4}.
$$
%Since $r_1 a_1 \equiv 1 \mod 4$, we get a cotradiction. 
Then we see that $kd=3$ and $d_1=1$, and hence,
$k=r'=1$ and $d=3$. Therefore 
$v=(2,3H,2)$ and $v_1=(1,H,2),(2,H,1)$.

Assume that $k$ is even.
If $r_1 \geq \frac{kd+2}{2}$, then we see that
\begin{equation}
r_1 a_1\geq \left(\frac{kd}{2}+1 \right)a_1=
\frac{kd}{2}a_1+a_1>\frac{krd_1}{2}+a_1=d_1^2+a_1,
\end{equation}
which contradicts \eqref{eq:vv_1}. 
Hence $r_1 \leq \frac{kd}{2}$.
We also have $a_1 \leq \frac{kd}{2}$.
By $r_1+a_1= kd$, we see that $r_1=a_1=\frac{kd}{2}$. 
By solving the equation $\frac{(kd)^2}{4}=d_1^2+1$, 
we get $d_1=0$, which is a contradiction. 
\end{proof}

\begin{NB}
For a general $n$, we also get the following.
Assume that $n \nmid r$, $\gcd(r,d)=1$ and $r$ is odd.
Then a similar claim holds.

\begin{proof}
We set $l:=\gcd(n,r)$. Then
$r=r' l,n=n' l$ $(l \in {\Bbb Z})$.
We note that $r'$ is odd, $\gcd(r',dn')=1$, $n'>1$. 

We see that $d_1=r' k$, $r_1+a_1=2n' dk$.
If $r_1>\frac{ndd_1}{r}=n'dk$, then
$r_1 \geq n' dd_1+1$.
Then we get
$$
r_1 a_1 \geq n'dk a_1+a_1>nd_1^2+a_1. 
$$
Therefore $r_1 \leq n'dk$. similarly
we get $a_1 \leq n'dk$.
By $r_1+a_1=2n' dk$, $r_1=a_1=n'dk$.
Then we get $(n'dk)^2=d_1^2 n+1$, which contradicts $n'>1$.
Therefore there is no $v_1$.
\end{proof}

For $n=3$ and $v=(3,2H,3)$, $\langle v,(2,H,2) \rangle=0$
and define a codimension 1 wall.

\begin{lem}
Assume that $n > r$.
Then there is no totally semistable wall and codimension 1 wall
for $v=(r,dH,r)$. 
\end{lem}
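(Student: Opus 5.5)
The plan is to reduce the statement to numerical inequalities for the Mukai vectors in $D_v^\beta$, exactly as in the proof of Theorem \ref{thm:n}. Here $\beta=0$ and $(p,q)=(1,0)$, so $v^\beta=v=(r,dH,r)$ with $d^\beta=d>0$ and $a^\beta=r\geq 0$. Both kinds of walls, totally semistable and codimension one, would be defined by a vector $v_1=(r_1,d_1H,a_1)\in D_v^\beta$ pairing badly with $v$. So it suffices to show the following:
\begin{itemize}
\item[(a)] $\langle v,v_1\rangle\geq 3$ for every isotropic $v_1\in D_v^\beta$;
\item[(b)] $\langle v,v_1\rangle>0$ for every $(-2)$-vector $v_1\in D_v^\beta$.
\end{itemize}
Granting these, Proposition \ref{prop:birational} and Proposition \ref{prop:no-wall}, together with the dimension count used in Theorem \ref{thm:n}, rule out both kinds of walls.

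I would split according to the sign of $r_1d-rd_1$.

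If $r_1d-rd_1<0$, Lemma \ref{lem:r_1<0} applies directly. Part (1) needs only $n\geq p^2=1$; part (2) needs $n\geq p(r+p)=r+1$, which is exactly the hypothesis $n>r$. This gives $\langle v,v_1\rangle\geq 2$, and the bound is strict since $r>0$.

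If $r_1d-rd_1>0$ and $v_1$ is isotropic, Lemma \ref{lem:r_1>0}(2) gives $\langle v,v_1\rangle\geq 3$, because $n>r\geq 1=p^2$. This settles (a).

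It remains to treat a $(-2)$-vector $v_1\in D_v^\beta$ with $r_1d-rd_1>0$; I expect this case to be the main obstacle. The defining inequality of $D_v^\beta$ then gives $a_1d-rd_1>0$. Set $m:=rd_1/d$, so that $r_1>m$ and $a_1>m$, with $r_1a_1=nd_1^2+1$. Since $\langle v,v_1\rangle=2ndd_1-r(r_1+a_1)$, I would maximize $r_1+a_1$ subject to fixed product $nd_1^2+1$ and both factors $>m$. The maximum is approached at the extreme $a_1\to m$, giving
\begin{equation*}
r(r_1+a_1)<rm+\frac{r(nd_1^2+1)}{m}=\frac{r^2d_1}{d}+\frac{d(nd_1^2+1)}{d_1}.
\end{equation*}
Hence $\langle v,v_1\rangle>ndd_1-\frac{d}{d_1}-\frac{r^2d_1}{d}$, which is nonnegative as soon as $nd^2d_1^2\geq d^2+r^2d_1^2$. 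Using $d_1\geq 1$ and $n\geq r+1$, this reduces to $rd^2\geq r^2$, i.e.\ $d^2\geq r$.

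For the residual range $d^2<r$ I would argue differently. Since $1\leq d_1\leq d$, only finitely many $d_1$ occur. I would exploit integrality: $a_1\geq\lfloor m\rfloor+1$ and $k=a_1d-rd_1\geq 1$. I would also use the exact formula \eqref{eq:r_1>0}, whose numerator is
\begin{equation*}
(r_1d-rd_1)(nd_1^2-1)-(2rd_1-kr_1^2).
\end{equation*}
The approach is to bound $r_1\geq (rd_1+1)/d$ and combine $k\geq 1$ with $n\geq r+1$, so that $kr_1^2+(nd_1^2-1)$ dominates $2rd_1$. If this direct estimate fails for small $d_1$, the fallback is a case analysis on $d_1$ modeled on the preceding lemma for $n=1$.
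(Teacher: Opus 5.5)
Your outline is correct. It follows a different route from the paper in the main case ($\langle v_1^2\rangle=-2$, $r_1d-rd_1>0$), but as written it stops short in the range $d^2<r$. That range does close with the direct estimate you suggest, so no case analysis on $d_1$ is needed. Since $r_1d-rd_1\ge 1$, $k\ge 1$ and $nd_1^2-1\ge 0$, the numerator of \eqref{eq:r_1>0} is at least $nd_1^2-1-2rd_1+kr_1^2$. By $n\ge r+1$ this is at least $rd_1(d_1-2)+d_1^2-1+kr_1^2$.
\begin{itemize}
\item For every $d_1\ge 2$ this is positive, whatever $d$ is.
\item For $d_1=1$ it is at least $r_1^2-r\ge (r+1)^2/d^2-r>0$, using $r_1d\ge r+1$ and $d^2<r$.
\end{itemize}
You should write this out instead of leaving a fallback. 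Note also that the two parts of Lemma \ref{lem:r_1<0} are distinguished by $\langle v^2\rangle\ge 0$ versus $\langle v^2\rangle=-2$, not by the type of $v_1$. Invoking that lemma therefore tacitly assumes $\langle v^2\rangle\ge -2$. The paper's proof silently restricts to $r_1d-rd_1>0$ and rests on the same assumption.

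The paper handles the main case in one stroke by keeping integrality on both factors, rather than optimizing over real $r_1,a_1>rd_1/d$. It uses $a_1\ge (rd_1+1)/d$, and observes that $r_1>ndd_1/r$ would give $rr_1\ge ndd_1+1$. Then $r_1a_1\ge \frac{(ndd_1+1)(rd_1+1)}{rd}>nd_1^2+\frac{n}{r}d_1>nd_1^2+1$, a contradiction. Hence $r_1,a_1\le ndd_1/r$, so $\langle v,v_1\rangle=2ndd_1-r(r_1+a_1)\ge 0$. Equality forces $r_1=a_1=N:=ndd_1/r$ with $N^2-nd_1^2=1$, so $\gcd(N,n)=1$. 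Then $rN=ndd_1$ gives $n\mid r$, contradicting $n>r$.

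This avoids your split at $d^2=r$ and is shorter. Your version has the merit of reusing \eqref{eq:r_1>0} and Lemmas \ref{lem:r_1<0}, \ref{lem:r_1>0}. It also states explicitly the isotropic bound $\langle v,v_1\rangle\ge 3$ needed to exclude codimension-one walls, which the paper's proof leaves implicit.
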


\begin{proof}
Assume that $v_1 \in D_v^0$ with $\langle v,v_1 \rangle \leq 0$.
We note that
$$
\frac{ndd_1+1}{r}\frac{rd_1+1}{d}>nd_1^2+\frac{n}{r}d_1 \geq nd_1^2+1.
$$ 
Hence $r_1,a_1 \leq \frac{ndd_1}{r}$, which implies 
$\langle v,v_1 \rangle \geq 0$.
If the equality holds, then
$$
\left(\frac{ndd_1}{r} \right)^2=d_1^2 n+1.
$$
Then we get $\gcd(n,r)=n$, which contradicts to our assumption $n>r$.
Therefore $\langle v,v_1 \rangle>0$. 
\end{proof}
\end{NB}

In general it is not so easy to classify totally semistable walls and hence 
the autoequivalence $\Psi$.
As an easy example, we shall treat the case where $v=(r,(r+2)H,r)$ ($r$ is odd and $r \geq 5$).
We first classfy $v_1 \in D_v^\beta$ satisfying 
$\langle v,v_1 \rangle \leq 0$.

\begin{lem}
We set $v:=(r,(r+2)H,r)$ $(r \geq 5)$. 
Assume that $v_1=(r_1,d_1,a_1) \in D_v^\beta$ satisfies
$\langle v_1^2 \rangle=-2$ and $\langle v,v_1 \rangle \leq 0$.
Then $v_1=(1,H,2),(2,H,1)$.
\end{lem}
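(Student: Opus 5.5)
By symmetry of $v=(r,(r+2)H,r)$ under swapping rank and $a$, the conditions on $(r_1,a_1)$ below are symmetric. The plan is to reduce everything to elementary inequalities in the integers $r_1,d_1,a_1$, exactly as in the preceding lemma for $v=(r,dH,r)$. Here $n=1$, $p=1$, $q=0$, $\beta=0$, so $d^\beta=d=r+2$ and $a^\beta=a=r$. A $(-2)$-vector $v_1\in D_v^\beta$ with $\langle v,v_1\rangle\le 0$ then satisfies
\begin{equation*}
d_1^2+1=r_1a_1,\qquad 2(r+2)d_1\le r(r_1+a_1),\qquad 0<d_1\le r+2 .
\end{equation*}

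\emph{Step 1: positivity bounds.} Since $\langle v^2\rangle=2((r+2)^2-r^2)>0$ and $n=1=p^2$, Lemma \ref{lem:r_1<0} (1) shows that $r_1d-rd_1<0$ would force $\langle v,v_1\rangle\ge 2$. Hence $r_1d-rd_1>0$. The defining inequality of $D_v^\beta$ then gives $k=a_1d-rd_1>0$. So
\begin{equation*}
r_1>\frac{rd_1}{r+2},\qquad a_1>\frac{rd_1}{r+2}.
\end{equation*}
By symmetry we may assume $r_1\le a_1$.

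\emph{Step 2: bound $d_1$.} With the product $r_1a_1=d_1^2+1$ fixed and both factors exceeding $m:=rd_1/(r+2)$, the sum satisfies $r_1+a_1<m+(d_1^2+1)/m$. Combining this with $r_1+a_1\ge 2(r+2)d_1/r$ and multiplying by $r(r+2)d_1$ gives
\begin{equation*}
d_1^2\bigl((r+2)^2-r^2\bigr)<(r+2)^2,\quad\text{i.e.}\quad d_1^2<\frac{(r+2)^2}{4(r+1)} .
\end{equation*}
In particular $d_1<\frac{r+2}{2}$, so $\frac{2d_1}{r+2}<1$.

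\emph{Step 3: integrality.} From $r_1\le a_1$ we get $r_1\le\sqrt{d_1^2+1}<d_1+1$, so $r_1\le d_1$. Step 1 gives $r_1>d_1-\frac{2d_1}{r+2}>d_1-1$. Hence $r_1=d_1$. Then $a_1=(d_1^2+1)/d_1=d_1+1/d_1$ is an integer only if $d_1=1$, which gives $r_1=1$, $a_1=2$. Undoing the symmetry adds $(2,H,1)$.

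\emph{Step 4: verification.} Check directly that $(1,H,2)$ and $(2,H,1)$ lie in $D_v^\beta$: $1>r/(r+2)$ holds, and the slope inequality holds. Also $\langle v,v_1\rangle=2(r+2)-3r=4-r\le 0$, which uses $r\ge 5$ (indeed $r\ge 4$ suffices; oddness is not needed for this lemma).

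The main obstacle is Step 2: getting a bound on $d_1$ strong enough that the integrality argument of Step 3 closes. The key is to use the lower bounds on both $r_1$ and $a_1$ together with the fixed product, rather than bounding each factor separately as in the proof of the previous lemma.
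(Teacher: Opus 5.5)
Your proof is correct, and your Step 1 is the paper's opening move: Lemma \ref{lem:r_1<0} forces $r_1d-rd_1>0$, hence $r_1,a_1>rd_1/(r+2)$. After that the two routes diverge.

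The paper never uses $\langle v,v_1\rangle\le 0$ again. From $d_1\le d=r+2$ it gets $r_1,a_1\ge d_1-1$, and $r_1,a_1\ge d_1$ when $d_1<\frac{r}{2}+1$. It then splits into two cases:
\begin{itemize}
\item For $d_1\ge 4$, it shows $r_1,a_1\le d_1+1$ via $(d_1+2)(d_1-1)>d_1^2+1$. It then notes that no product of two numbers in $\{d_1-1,d_1,d_1+1\}$ equals $d_1^2+1$.
\item For $d_1\le 3$, the hypothesis $r\ge 5$ gives $d_1<\frac{r}{2}+1$, hence $r_1,a_1\ge d_1$. It then checks $r_1a_1\in\{2,5,10\}$ by hand.
\end{itemize}

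You instead combine the wall inequality $r(r_1+a_1)\ge 2(r+2)d_1$ with $(r_1-m)(a_1-m)>0$. This gives $4(r+1)d_1^2<(r+2)^2$, which removes the case analysis. With $r_1\le a_1$, the integrality squeeze $d_1-1<r_1\le d_1$ then finishes the argument. Your necessity argument also works for every $r\ge 1$.

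The trade-off is that the paper's argument proves more. It shows that $(1,H,2)$ and $(2,H,1)$ are the only $(-2)$-vectors in $D_v^\beta$ with $r_1d-rd_1>0$, whatever the sign of $\langle v,v_1\rangle$. Yours genuinely needs the hypothesis $\langle v,v_1\rangle\le 0$.

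Two small remarks. The bound $r_1+a_1<m+(d_1^2+1)/m$ deserves its one-line justification, namely expanding $(r_1-m)(a_1-m)>0$. Your Step 4, checking that both vectors actually occur, is not required by the statement but does no harm.
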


\begin{proof}
By Lemma \ref{lem:r_1<0},
$r_1 d-r d_1>0$ and $a_1 d-a d_1>0$.
Since $d_1 \leq d=r+2$,
$$
r_1>\frac{r}{r+2}d_1=d_1-\frac{2d_1}{r+2} \geq d_1-2.
$$
Moreover $r_1>d_1-1$ if $d_1<\frac{r}{2}+1$.
Hence 
$r_1 \geq r-1$ and $r_1 \geq r$ if $d_1<\frac{r}{2}+1.$
We also have $a_1  \geq r-1$ and $a_1 \geq r$ if $d_1<\frac{r}{2}+1$.

Assume that $d_1 \geq 4$.
If $r_1 \geq d_1+2$, then 
$$
r_1 a_1 \geq (d_1+2)(d_1-1)>d_1^2+1=r_1 a_1.
$$
Hence $r_1 \leq d_1+1$.
We also get $a_1 \leq d_1+1$.
Then we see that $r_1 a_1 \ne d_1^2+1$.
Assume that $d_1 \leq 3$.
Then $d_1<\frac{r}{2}+1$, which implies
$r_1,a_1 \geq d_1$.
Then we see that $r_1 a_1 \ne d_1^2+1$ unless $v_1=(1,H,2), (2,H,1)$. 
\end{proof}

We note that $F_1:={\cal O}_X(H)$ and stable vector bundle $F_2$ with 
$v(F_2)=(2,H,1)$ are stable on $U_+^\beta(\nu)$. 
Let $\Phi_1$ and $\Phi_2$ be spherical twists
associated to $F_1$ and $F_2$.
Then we can show that 
\begin{equation}\label{eq:Psi}
\Psi=\Phi_2 \circ \Phi_2 \circ \Phi_1 \circ \Phi_1
\end{equation}
%In particular $\Psi$ acts trivially on $H^*(X,{\Bbb Z})$.
by using the following lemma.

\begin{lem}\label{lem:wall-L}
For the Mukai vector $v=(r,dH,r)$ with $r>2(d-r)$,
we set $w_i:=v+\langle v,v(F_i) \rangle v(F_i)$.
Let $v_1=(r_1,d_1 H,a_1)$ $(d_1>0)$ be a $(-2)$-vector with
$$
\frac{a_1 d-rd_1}{r_1 d-rd_1}=\left(\frac{2d-r}{d-r}\right)^{(-1)^{i+1}}.
$$ 
Then $\langle w_i,v_1 \rangle>0$.
\end{lem}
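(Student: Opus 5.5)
The ratio condition in Lemma \ref{lem:wall-L} says that $v_1$ lies on the numerical wall $W_{v(F_i)}$ for $v$. I will first check this directly. Note that $n=1$, so $(H^2)=2$, $v(F_1)=(1,H,2)$ and $v(F_2)=(2,H,1)$. For $u=v(F_i)$ the quantity $\frac{a_u d-r d_u}{r_u d-r d_u}$ equals $\frac{2d-r}{d-r}$ for $i=1$ and $\frac{d-r}{2d-r}$ for $i=2$, which is exactly the prescribed value. Since $\Pic(X)={\Bbb Z}H$, lying on this wall is a single linear condition on $(r_1,d_1,a_1)$. Hence
\[
v_1 \in L_i:=({\Bbb Q}v+{\Bbb Q}f_i)\cap H^*(X,{\Bbb Z})_{\alg}, \qquad f_i:=v(F_i).
\]
So the whole problem reduces to a computation in a rank two lattice. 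Its Gram matrix is
\[
\langle v^2\rangle=2(d^2-r^2),\qquad \langle f_i^2\rangle=-2,\qquad m:=\langle v,f_i\rangle=2d-3r \quad (i=1,2).
\]
The hypothesis $r>2(d-r)$ gives $m<0$. The lemma is only applied with $d>r$ (for instance $d=r+2$), so I will take $d>r$ as in that application; then $\langle v^2\rangle>0$ and $L_i$ is hyperbolic. By definition $w_i=v+mf_i$ is the image of $v$ under the $(-2)$-reflection by $f_i$. Hence $\langle w_i^2\rangle=\langle v^2\rangle>0$ and $\langle w_i,f_i\rangle=-m>0$.

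Next I write $v_1=xv+yf_i$ with $x,y\in{\Bbb Q}$ and translate the hypotheses. The condition $\langle v_1^2\rangle=-2$ becomes
\[
x^2\langle v^2\rangle+2xym-2y^2=-2 .
\]
The condition $d_1>0$ becomes $xd+y>0$ for $i=1$, since $f_1$ has $d$-component $1$. For $i=2$ it is the same condition, because $f_2$ also has $d$-component $1$. The quantity to be shown positive is
\[
\langle w_i,v_1\rangle=x(\langle v^2\rangle+m^2)-ym=x(\langle v^2\rangle+m^2)+y|m| .
\]

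The case analysis on the signs of $x,y$ then runs as follows. If $x,y\ge 0$, then they are not both zero and positivity is immediate. If $x\le 0$, then $d_1>0$ forces $y>-xd\ge 0$. I would substitute this into the norm equation, which then reads $2y^2-2xy|m|\cdot(-1)\dots$; concretely one solves for $y$ in terms of $x$, and I aim to show $y|m|>|x|(\langle v^2\rangle+m^2)$. This uses $y>|x|d$ together with the relation $\langle v^2\rangle+m^2=2(d^2-r^2)+(3r-2d)^2$ and the bound $3r-2d<r$ that follows from $d>r$. The case $y<0<x$ is the remaining one. There I would use the geometric picture: $w_i^{\perp}$ in the hyperbolic plane $L_i\otimes{\Bbb R}$ is spanned by a single negative vector. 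The $(-2)$-vectors with $\langle w_i,v_1\rangle\le 0$ lie in a half-plane, and I would show that this half-plane together with the norm equation forces $xd+y\le 0$, which contradicts $d_1>0$.

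The main obstacle is this sign bookkeeping in the mixed cases, that is, showing that the half-plane $\{\langle w_i,\cdot\rangle\le 0\}$ meets the hyperbola $\{\langle u^2\rangle=-2\}$ only where $d_1\le 0$. I would first try to do this by evaluating $d_1$ at the boundary, namely at the two $(-2)$-directions of $w_i^\perp\otimes{\Bbb R}$, and using the inequalities $r>2(d-r)$ and $d>r$. Failing that, I would fall back on a direct quadratic-inequality estimate in $(x,y)$, in the style of the proof of Lemma \ref{lem:r_1>0}.
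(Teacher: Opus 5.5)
There is a genuine gap. Your argument uses only real information: the norm equation, $d_1>0$, and inequalities between $r$ and $d$. But the statement is false for real vectors of $L_i\otimes{\Bbb R}$, so neither the inequality $y|m|>|x|(\langle v^2\rangle+m^2)$ in the case $x<0$ nor the claim that $\{\langle w_i,\cdot\rangle\le 0\}$ meets the hyperbola $\{\langle u^2\rangle=-2\}$ only where $d_1\le 0$ can be proved.

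Concretely, take $r=5$, $d=7$ (so $r>2(d-r)$, exactly the application $d=r+2$), $i=1$, and $u=\frac{1}{\sqrt{12}}(2,H,\frac{13}{2})$. Then $u$ satisfies:
\begin{itemize}
\item the ratio condition, since the ratio is $\frac{9}{2}=\frac{2d-r}{d-r}$;
\item $\langle u^2\rangle=-2$ and $d_1>0$;
\item yet $\langle w_1,u\rangle=-\frac{20}{\sqrt{12}}<0$, where $w_1=(4,6H,3)$.
\end{itemize}
In your coordinates $x=-\frac{1}{2\sqrt{12}}$ and $y=\frac{9}{2\sqrt{12}}$, so $y>|x|d$ but $y|m|=\frac{9}{2\sqrt{12}}<\frac{49}{2\sqrt{12}}=|x|(\langle v^2\rangle+m^2)$. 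Since $r_1-d_1=-x(d-r)$, your case $x<0$ is exactly $r_1>d_1$ (for $i=2$, $a_1>d_1$), and that is where these bad real solutions live.

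The missing ingredient is integrality of $v_1$, which is how the paper handles this case. On $L_1$ one has $a_1=2r_1+\frac{r(r_1-d_1)}{d-r}$. So $r_1\ge d_1\ge 1$ gives $a_1\ge 2r_1$ and $d_1^2+1=r_1a_1\ge 2d_1^2$. Hence $d_1=1$ and then $v_1=(1,H,2)=f_1$, for which $\langle w_1,f_1\rangle=3r-2d>0$. The case $i=2$ is symmetric under $r_1\leftrightarrow a_1$. Without using $d_1\in{\Bbb Z}_{>0}$ here, the argument cannot close.

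Conversely, the mixed case $y<0<x$, which you flag as the main obstacle, is easy and needs no norm equation. From $|y|<xd$ you get $\langle w_i,v_1\rangle>x\bigl(\langle v^2\rangle+m^2-d|m|\bigr)=x(8d-7r)(d-r)>0$. This is the paper's linear identity $\langle w_1,v_1\rangle=(6(d-r)+2r)d_1-(8(d-r)+r)r_1$ for $r_1<d_1$, rewritten in your basis $v,f_i$.
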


\begin{NB}

\begin{proof}
We only treat the case where $i=1$.
We note that $w_1=(2(d-r),3(d-r)H,4d-5r)$.
We set $L:=({\Bbb Q}v+{\Bbb Q}v(F_1)) \cap H^*(X,{\Bbb Z})_{\alg}$.
%$L_1:=({\Bbb Q}(1,H,2)+{\Bbb Q}(2(d-r),3(d-r)H,4d-5r)) \cap H^*(X,{\Bbb Z})_{\alg}$.
Let $v_1=(r_1,d_1 H,a_1) \in L$ be a $(-2)$-vector such that
$d_1>0$.
Then $a_1=\frac{(2d-r)r_1-rd_1}{d-r}$.
If $r_1 \geq d_1$, then 
$$
a_1=\frac{(r_1-d_1)r}{d-r}+2r_1 \geq 2r_1 \geq 2d_1.
$$
Hence $d_1^2+1=r_1 a_1 \geq 2d_1^2$, which implies $v_1=(1,H,2)$.
In this case $\langle w_1,v_1 \rangle=3r-2d >0$.
If $r_1<d_1$, then
$\langle w_1,v_1 \rangle =(6(d-r)+2r)d_1-(8(d-r)+r)r_1>0$.
Therefore there is a $\sigma_{(0,t_0)}$-stable object $F$ with
$v(F)=w_1$, where $t_0=\sqrt{\frac{2d-r}{d-r}}$.
\end{proof}

We set $L_2:=({\Bbb Q}(2,H,1)+{\Bbb Q}(12-r, 9H,6)) \cap H^*(X,{\Bbb Z})_{\alg}$.
Let $v_1=(r_1,d_1 H,a_1) \in L_2$ be a $(-2)$-vector such that
$d_1>0$.
Then $r_1=\frac{(r+6)a_1-rd_1}{3}$.
If $a_1 \geq d_1$, then $r_1=(a_1-d_1)r/3+2a_1 \geq 2a_1$.
Hence $d_1^2+1=r_1 a_1 \geq 2d_1^2$, which implies $v_1=(2,H,1)$.
In this case $\langle (12-r,9H,6),v_1 \rangle=r-6 >0$.
If $a_1<d_1$, then
$\langle (12-r,9H,6),v_1 \rangle= (18+2r)d_1-(r+24)a_1>0$.
Therefore there is a $\sigma_{(0,t_0)}$-stable object $F$ with
$v(F)=(12-r,9H,6)$, where $t_0=\frac{3}{r+6}$.

\end{NB}

\begin{NB}

We set $L_1:=({\Bbb Q}(1,H,2)+{\Bbb Q}(4,6H,8-r)) \cap H^*(X,{\Bbb Z})_{\alg}$.
Let $v_1=(r_1,d_1 H,a_1) \in L_1$ be a $(-2)$-vector such that
$d_1>0$.
Then $a_1=\frac{(r+4)r_1-rd_1}{2}$.
If $r_1 \geq d_1$, then $a_1=(r_1-d_1)r/2+2r_1 \geq 2r_1$.
Hence $d_1^2+1=r_1 a_1 \geq 2d_1^2$, which implies $v_1=(1,H,2)$.
In this case $\langle (4,6H,8-r),v_1 \rangle=r-4 >0$.
If $r_1<d_1$, then
$\langle (4,6H,8-r),v_1 \rangle =(12+2r)d_1-(r+16)r_1>0$.
Therefore there is a $\sigma_{(0,t_0)}$-stable object $F$ with
$v(F)=(4,6H,8-r)$, where $t_0=\frac{r+4}{2}$.

We set $L_2:=({\Bbb Q}(2,H,1)+{\Bbb Q}(8-r, 6H,4)) \cap H^*(X,{\Bbb Z})_{\alg}$.
Let $v_1=(r_1,d_1 H,a_1) \in L_2$ be a $(-2)$-vector such that
$d_1>0$.
Then $r_1=\frac{(r+4)a_1-rd_1}{2}$.
If $a_1 \geq d_1$, then $r_1=(a_1-d_1)r/2+2a_1 \geq 2a_1$.
Hence $d_1^2+1=r_1 a_1 \geq 2d_1^2$, which implies $v_1=(2,H,1)$.
In this case $\langle (8-r,6H,4),v_1 \rangle=r-4 >0$.
If $a_1<d_1$, then
$\langle (8-r,6H,4),v_1 \rangle= (12+2r)d_1-(r+16)a_1>0$.
Therefore there is a $\sigma_{(0,t_0)}$-stable object $F$ with
$v(F)=(8-r,6H,4)$, where $t_0=\frac{2}{r+4}$.

\end{NB}

\begin{NB}
For $v=(3,4H,3)$,
$(1,H,2)$ and $(2,H,1)$ define totally semistable walls.
Hence $D_X \circ \Phi_{X \to X}^{{\cal E}}$ does not induce a birationl map,
where ${\cal E}=I_\Delta$ is the ideal sheaf of the diagonal $\Delta \subset X \times X$.

$\langle (1,H,2),(r,dH,r) \rangle=2d-3r<0$ and $\frac{2d-r}{d-r}>0$.
For a general $E \in {\cal M}_H(r,dH,r)$,
we have an exact sequence
$$
0 \to {\cal O}_X(H)^{\oplus (3r-2d)} \to E \to F \to 0,
$$
where
$F \in {\cal M}_H(2(d-r),3(d-r),4d-5r)$.

\end{NB}

\begin{rem}
For $v=(r,(r+1)H,r)$, $(1,H,2)$ and $(2,H,1)$ still give totally semistable wall, but the birational automorphism 
of $M_H(v)$
is slightly different. In this case, $\langle v,(1,H,1) \rangle=0$ and
defines a wall for the movable cone. So
$D_X \circ \Phi_{X \to X}^{{\cal E}} \circ \Psi$ induces a birational map 
acting trivially on the movable cone of $M_H(v)$.
\end{rem}

\begin{rem}
Assume that $v=(r,(r+3)H,r)$. Then a similar claim holds for $r \geq 7$.
Thus $\langle v,v_1 \rangle \leq 0$ iff $v_1=(1,H,2),(2,H,1)$ and
they define totally semistable walls.
In particular \eqref{eq:Psi} holds.
\end{rem}

\begin{NB}
Assume that $v=(r,(r+3)H,r)$.
We shall classfy $v_1=(r_1,d_1 H,a_1) \in D_v^\beta$ satisfying $\langle v_1^2 \rangle=-2$ and
$\langle v,v_1 \rangle \leq 0$.
By Lemma \ref{lem:r_1<0},
$r_1 d-r d_1>0$ and $a_1 d-a d_1>0$.
We note that
$\frac{d_1}{d}r=d_1-\frac{3d_1}{r+3} \geq d_1-3$.
Hence $r_1 \geq d_1-2$ and $a_1 \geq d_1-2$.
Assume that $d_1 \geq 8$.
If $r_1 \geq d_1+3$, then $(d_1+3)(d_1-2)>d_1^2+1$.
Therefore $r_1 \leq d_1+2$. We also get $a_1 \leq d_1+2$.
Since $\frac{d_1^2+1}{d_1+2}=d_1-2+\frac{5}{d_1+2}$ and
$\frac{d_1^2+1}{d_1+1}=d_1-1+\frac{2}{d_1+1}$ are not integers,
we get a contradiction.
So $d_1 \leq 7$.
If $d_1=7$, then $r_1 \geq d_1-2=5$ and $a_1 \geq d_1-2=5$.
Hence $\{r_1,a_1 \}=\{10,5 \}$.
In this case, $5>\frac{7r}{r+3}$ implies $r \leq 7$. 
We have $\langle v,v_1 \rangle=42-r>0$.
If $d_1=6$, then $r_1 a_1=37$. By $r_1,a_1 \geq d_1-2=4$, there is no chioce.
If $d_1=5$, then $r_1 a_1=26$. By $r_1,a_1 \geq d_1-2=3$, there is no choice. 
If $d_1=4$, then $r_1 a_1=17$. By $r_1,a_1 \geq d_1-2=2$, there is no choice.
If $d_1=3$, then $r_1 a_1=10$. If $2>\frac{3r}{r+3}$, then $r \leq 5$.
We have $\langle v,(2,3H,5) \rangle=18-r$.
If $1>\frac{3r}{r+3}$, then $r \leq 1$ and $\langle v,(1,3H,10) \rangle=18-5r>0$.
If $d_1=2$, then $r_1 a_1=5$. If $1>\frac{2r}{r+3}$, then $r \leq 2$.
If $d_1=1$, then $\{ r_1,a_1 \}=\{ 1,2 \}$.
Hence $\langle v,v_1 \rangle=6-r$.
 
If $r=4,5$, then there is no required $v_1$.
If $r=6$, then $\langle v,(1,H,2) \rangle=0$.
If $r \geq 7$, then $\langle v,v_1 \rangle \leq 0$ iff $v_1=(1,H,2),(2,H,1)$.

\end{NB}

\subsection{Stable sheaves parameterized by the Hilbert scheme of points.}\label{subsect:Hilb}
For the classification of stable sheaves, it is nice to express a general stable sheaves by simpler objects. 
For an abelian surface with Picard number 1,
if the moduli space is birationally equivalent to 
the moduli space of rank 1 stable sheaves, then
a general stable sheaves can be described by using semihomogeneous sheaves.
For a K3 surface, we cannot expect such a concrete description in general.
In this subsection, we give a partial result on this problem. 
Let $u_0,u_1$ be Mukai vectors such that
\begin{equation}
\langle u_0^2 \rangle=\langle u_1^2 \rangle=0,\; \langle u_0,u_1 \rangle=-1.
\end{equation}
Replacing $u_0$ by $-u_0$ if necessary, we assume that $u_0$ is a Mukai vector of 
a stable sheaf.
There are integers $p_1,p_2,q_1,q_2,s,t$ such that 
$s,t>0$, $st=n$, $p_2 q_1 t-p_1 q_2 s=\pm 1$ and
$$
u_0=(p_1^2 s,p_1 q_1 H,q_1^2 t),\; u_1=(p_2^2 t,p_2 q_2 H,p_2^2 s).
$$  
We set $X':=M_H(u_0)$.
Then $X'$ is a K3 surface and there is a universal family ${\cal E}$.
We assume that $p_2 q_1 t-p_1 q_2 s=1$.
Then $\frac{p_2 q_2}{p_2^2 t}<\frac{p_1 q_1}{p_1^2 s}$.
Since $\langle (u_0+u_1)^2 \rangle=-2$, there is a stable vector bundle $E_0$
with $v(E_0)=u_0+u_1$.

\begin{prop}
Assume that $t \geq p_1 s(p_1 s+2)$ and $s \ne 1$.
For a general homomorphism
\begin{equation}\label{eq:varphi}
\varphi:E_0 \to \oplus_{i=1}^{\ell+1} {\cal E}_{|X \times \{ x_i' \}},
\end{equation}
one of the following holds:
\begin{enumerate}
\item
$\varphi^{\vee}$ is injective and $\coker \varphi^{\vee} \in {\cal M}_H(u_1'-\ell u_0')$. 
Moreover a general $F \in {\cal M}_H(u_1'-\ell u_0')$ is given in this way, where 
$\varphi^{\vee}$ is the dual of $\varphi$ and
$u_i'=D_X(u_i)$. 
\item
$\varphi$ is injective and $\coker \varphi \in {\cal M}_H(\ell u_0-u_1)$.
Moreover a general $F \in {\cal M}_H(\ell u_0-u_1)$ is given in this way. 
\end{enumerate}
\end{prop}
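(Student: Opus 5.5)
The plan is to deduce both alternatives from Theorem \ref{thm:n}, applied to the Fourier--Mukai transform on $X'=M_H(u_0)$ with kernel ${\cal E}$, together with the description of the general member of a moduli space near the Gieseker chamber.

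\textbf{Setting up.} First I would put $u_0$ in the form $r_0e^{\beta}$ with $\beta=\frac{q}{p}H$: the data should be $r_0=p_1^2s/\gcd$, $p$ a divisor of $p_1s$, and $n=st$. The hypothesis $t\ge p_1s(p_1s+2)$ should then be exactly $n\ge(2r+p)p$ for the relevant $v$, with $r$ of size $p_1s$. I would set $\ell$ so that $v:=u_1-\ell u_0$ (or its dual) has $\langle v^2\rangle=2\ell-2\ge-2$. Note that $\ell u_0-u_1=-v$.

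\textbf{Case split.} Next I would check the sign conditions \eqref{eq:cond-v} for the relevant vector. Using $\langle u_0,u_1\rangle=-1$ and $\langle u_0^2\rangle=0$ gives $-\langle e^{\beta},v\rangle\ge0$. The dichotomy (1)/(2) should come from which of $u_1'-\ell u_0'$ and $\ell u_0-u_1$ has $r\ge0$ and $pd-qr>0$. This depends on the sign of the rank $p_2^2t-\ell p_1^2s$, so the two alternatives are the two cases $r\ge0$ and $r<0$, with dualizing in the second.

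\textbf{Transformed side.} Theorem \ref{thm:n} then gives that $D_{X'}\circ\Phi_{X\to X'}^{\cal E}$ sends a general member of ${\cal M}_H(v)$ to a stable sheaf on $X'$. Its Mukai vector is computed from $\Phi(u_0)=(0,0,1)$ and the image of $u_1$, which has rank one and is essentially $v({\cal O}_{X'})$ up to sign and twist. So the target should be a moduli space of rank one sheaves $\cong X'^{[\ell+1]}$, whose general member is $I_Z$ with $Z=\{x_1',\dots,x_{\ell+1}'\}$ reduced.

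\textbf{Transporting back.} Applying the inverse transform to
\[
0\to I_Z\to{\cal O}_{X'}\to\oplus{\Bbb C}_{x_i'}\to0
\]
turns ${\cal O}_{X'}$ into $E_0$ (up to dual and shift, using $v(E_0)=u_0+u_1$ and rigidity) and ${\Bbb C}_{x_i'}$ into ${\cal E}_{|X\times\{x_i'\}}$. Hence a general $F$ sits in a triangle with $E_0\xrightarrow{\varphi}\oplus{\cal E}_{|X\times\{x_i'\}}$. The homomorphism $\varphi$ is general because $\Hom(E_0,\oplus{\cal E}_{|X\times\{x_i'\}})$ corresponds to $\Hom({\cal O}_{X'},\oplus{\Bbb C}_{x_i'})$, where a general map is surjective with kernel $I_Z$. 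Whether $F$ is $\coker\varphi$ or $\coker\varphi^\vee$ then depends on the case above.

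\textbf{Main obstacle.} I expect the hardest step to be the numerical bookkeeping. One must show that $t\ge p_1s(p_1s+2)$ really yields $n\ge(2r+p)p$ together with \eqref{eq:cond-v} in both cases. One must also verify that $\varphi$ (resp. $\varphi^\vee$) is injective as a map of sheaves rather than just a morphism in ${\bf D}(X)$. I would use $s\ne1$ to ensure $p\ge2$ or to rule out a degenerate rank computation, and injectivity should follow from $E_0$ being locally free and stable.
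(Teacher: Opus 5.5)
Your outline matches the paper: realize $\varphi$ as the transform of ${\cal O}_{X'}\to{\cal O}_Z$, then pass through a birational correspondence with $(X')^{[\ell+1]}$. The gap is the step where you apply Theorem \ref{thm:n} on $X$; it fails as planned.

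\textbf{The rank bound is not met on $X$.} The vector you feed the theorem, $u_1'-\ell u_0'$ (or $\ell u_0-u_1$), has rank $|p_2^2t-\ell p_1^2s|$, not something of size $p_1s$. The theorem needs $n\ge(2r+p)p$ with $p=p_1s$ and this $r$, and $t\ge p_1s(p_1s+2)$ does not imply it. Already for $\ell=0$ you have $r=p_2^2t$, so $(2r+p)p>2p_2^2t\,p_1s\ge 2st>n$.

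\textbf{The missing idea: work on $X'$.} The hypothesis is designed for $r=1$. You must apply the theorem on $X'$ to the rank-one vector $v(I_Z)=(1,0,-\ell)$, using the transform $\Phi^{\cal E}_{X'\to X}$ in the reverse direction. This requires knowing that ${\cal E}$ is also a universal family of $\mu$-stable vector bundles on $X'$, parametrized by $X$. Its fibres ${\cal E}_{|\{x\}\times X'}$ have Mukai vector $p_1^2s\,e^{\frac{p_2}{p_1s}H'}$, so again $p=p_1s$, but now $r=1$.

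This is where $s\ne1$ is used. It forces $p_1s\nmid q_1^2n+1$, so all members of ${\cal E}$ are $\mu$-stable locally free. Then $X$ and $X'$ can be interchanged \cite{Y:Stability}, \cite{FM-duality}. After twisting ${\cal E}$ so that $\Phi^{{\cal E}^\vee}_{X\to X'}(u_1)=(1,0,0)$, you get $\coker\varphi^\vee=\Phi^{\cal E}_{X'\to X}(I_Z)^\vee$ in case (1) and $\coker\varphi=\Phi^{\cal E}_{X'\to X}(I_Z)[1]$ in case (2).

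\textbf{The dichotomy needs a second theorem.} The two cases do not both come from Theorem \ref{thm:n} after dualizing. For $(1,0,-\ell)$ on $X'$ one has $a^{\beta'}=\frac{p_2^2t}{p_1^2s}-\ell$. So case (2), $p_2^2t-\ell p_1^2s<0$, violates the hypothesis $-\langle e^{\beta'},v\rangle\ge0$ of Theorem \ref{thm:n}; on $X$ this reads $\langle \ell u_0-u_1,u_0\rangle=1>0$. In this case the relevant functor is a shift, not a dual, and you need Theorem \ref{thm:a<0}. Its bound $n>p^2$ does not involve $r$, so case (2) could even be handled on $X$; case (1) genuinely requires $X'$.

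\textbf{Two smaller points.}
\begin{enumerate}
\item Identifying $E_0$ with $\Phi^{\cal E}_{X'\to X}({\cal O}_{X'})$ requires this transform to be a stable sheaf. Being a rigid object with Mukai vector $u_0+u_1$ is not enough. The paper gets stability from Theorem \ref{thm:n} applied to $v({\cal O}_{X'})=(1,0,1)$, then uses uniqueness of rigid stable sheaves.
\item Injectivity of $\varphi$ (resp. $\varphi^\vee$) needs no argument via local freeness of $E_0$. Once the cone is a sheaf, the kernel is its ${\cal H}^{-1}$ and therefore vanishes.
\end{enumerate}
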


\begin{proof}
Since $s,t>1$, ${\cal E}$ is a family of $\mu$-stable vector bundles on $X$.
By \cite{Y:Stability} or \cite{FM-duality},
${\cal E}$ is regarded as a universal family of $\mu$-stable vector bundles on $X'$. 
Replacing ${\cal E}$ if necessary, we may assume that
$\Phi_{X \to X'}^{{\cal E}^{\vee}}(u_1)=(1,0,0)$.
By $t \geq p_1 s(p_1 s+2)$, Theorem \ref{thm:n} implies
$\Phi_{X' \to X}^{{\cal E}}({\cal O}_{X'})$ is a stable sheaf with the Mukai vector
$u_0+u_1$.
Hence $E_0= \Phi_{X' \to X}^{{\cal E}}({\cal O}_{X'})$.
Then 
the complex \eqref{eq:varphi} is the Fourier-Mukai transform
$\Phi_{X' \to X}^{{\cal E}}$
of the complex
$$
{\cal O}_{X'} \to {\cal O}_Z,
$$ 
where $Z=\{x_1',x_2',...,x_{\ell+1}' \}$.
\begin{NB}
\begin{equation}
\begin{split}
\Hom(E_0,{\cal E}_{|X \times \{ x' \}})=& {\Bbb C}\\
\Ext^1(E_0,{\cal E}_{|X \times \{ x' \}})=&0\\
\Ext^2(E_0,{\cal E}_{|X \times \{ x' \}})=&0
\end{split}
\end{equation}
for all $x' \in X'$.
\end{NB} 
%Hence ${\cal F}:={\cal E}^{\vee}$ is also a family of $\mu$-stable vector bundles on $X$.
Since $t \geq p_1 s(p_1 s+2)$, we can apply Theorem \ref{thm:n} and theorem \ref{thm:a<0}.
Thus $\Phi_{X' \to X}^{{\cal E}}(I_Z)^{\vee}$ is a stable sheaf for a general $I_Z$ if 
$p_2^2 t-\ell p_1^2 s \geq 0$ and
$\Phi_{X' \to X}^{{\cal E}}(I_Z)[1]$ is a stable sheaf for a general $I_Z$
if $p_2^2 t-\ell p_1^2 s<0$.
Therefore our claim holds.
\end{proof}

\begin{rem}
Under the assumption $\Phi_{X \to X'}^{{\cal E}^{\vee}}(E_0)[2]={\cal O}_{X'}$,
if $\ell$ is sufficiently large, then Theorem \ref{thm:a<<0} implies
$\coker \varphi$ is stable for a general $\varphi$. 
For example, assume that $p_1=1$. Then $h^1({\cal E}_{|\{ x \} \times X'})=0$
for a general $x \in X$
(see \cite[Thm. 1.1 (2)]{CNY}). Then $\Phi_{X' \to X}^{{\cal E}}({\cal O}_{X'})^{\vee}$ is a sheaf, and hence
a rigid stable bundle by \cite{Mukai:Tata}.
Hence $\Phi_{X' \to X}^{{\cal E}}({\cal O}_{X'})=E_0$.
\end{rem}

\begin{NB}

$n=st$. $n \geq p_1 s(p_1 s+2)$.
We set
$X'=M_H(p_1^2 s,p_1 p_2 H,p_2^2 t)$ $(p_1,p_2>0)$, where
$p_2 q_1 t-sp_1 q_2=1$, that is
$q_2=\frac{p_2 q_1 t-1}{p_1 s}$.
Then there is a universal family ${\cal E}$ which is locally free.

\begin{equation}
A=
\begin{pmatrix}
p_2 \sqrt{t} & q_2 \sqrt{s}\\
p_1 \sqrt{s} & q_1 \sqrt{t}
\end{pmatrix}
\end{equation}
$\det A=p_2 q_1 t-p_1 q_2 s=1$.
$\phi_A$ corresponds to $\Phi_{X' \to X}^{\cal E}$.

$\phi_A((1,0,0))=$
$\phi_A((0,0,1))=(p_1^2 s,p_1 p_2 H,p_2^2 t)$.

\begin{equation}
A^{-1}=
\begin{pmatrix}
q_1 \sqrt{t} & -q_2 \sqrt{s}\\
-p_1 \sqrt{s} & p_2 \sqrt{t}
\end{pmatrix}
\end{equation}

$v({\cal E}_{|\{x \} \times X'})=(p_1^2 s,p_1 p_2 H',p_2^2 t)$.

$\Phi_{X' \to X}^{{\cal E}}({\cal O}_{X'}) \in {\cal M}_H(p_2^2 t+p_1^2 s, (p_2 q_2+p_1 q_1)H,q_2^2 s+q_1^2 t)$.

$v=(p_2^2 t,p_2 q_2 H,q_2^2 s)-\ell(p_1^2 s,p_1 q_1 H,q_1^2 t)$.
For a general $E \in {\cal M}_{H'}(v)$, we have an exact sequence
$$
0 \to E \to E_0 \to \oplus_{i=1}^{\ell+1} {\cal E}_{|\{ x_i \} \times X'} \to 0
$$
where
$E_0 \in {\cal M}_{H'}(p_2^2 t+p_1^2 s, (p_2 q_2+p_1 q_1)H',q_2^2 s+q_1^2 t)$.

We set $v=-(p_2^2 t,p_2 q_2 H,q_2^2 s)+\ell(p_1^2 s,p_1 q_1 H,q_1^2 t)$.
For a general $E \in {\cal M}_{H'}(v)$, we have an exact sequence
$$
0 \to E_0 \to \oplus_{i=1}^{\ell+1} {\cal E}_{|\{ x_i \} \times X'} \to E \to 0.
$$
Indeed $\Phi_{X \to X'}^{\cal E}=\Phi_{X \to X'}^{{\cal F}^{\vee}}$, where ${\cal F}={\cal E}^{\vee}$.

\end{NB}

\begin{NB}

\section{Appendix}

\subsection{The case where $X$ is an abelian surface.}
Let $X$ be an abelian surface and $H$ an ample divisor such that
$(H^2)=2n$ and
$\NS(X)={\Bbb Z}H$.
In the same way as in the proof of Theorem \ref{thm:n} and
\cite[Thm. 4.2]{aCM:K3}, we get the following theorem.

\begin{thm}\label{thm:abel}
Let $v=(r,dH,a)$ be a Mukai vector such that
$$
r \geq 0, \; pd-rq>0,\;
\langle v^2 \rangle \geq 0.
$$ 
Assume that $n \geq p^2$.
\begin{enumerate}
\item[(1)] 
If $\langle v,v_0 \rangle<0$, then
$D_{X'} \circ \Phi_{X \to X'}^{{\cal E}}(E) \in {\cal M}_{H'}(v')$
for a general $E \in {\cal M}_H(v) \setminus Z$, where  
$v'=D_{X'} \circ \Phi_{X \to X'}^{{\cal E}}(v)$ and $Z$ is a closed substack of ${\cal M}_H(v)$. 
Moreover if $n >p^2$, then $\dim Z \leq \dim {\cal M}_H(v)-2$
\item[(2)]
If $\langle v,v_0 \rangle \geq 0$, then
$\Phi_{X \to X'}^{{\cal E}}(E)[1] \in {\cal M}_{H'}(v'')$
for a general $E \in {\cal M}_H(v) \setminus Z$, where  
$v''=-\Phi_{X \to X'}^{{\cal E}}(v)$ and $Z$ is a closed substack of ${\cal M}_H(v)$. 
Moreover $\dim Z \leq \dim {\cal M}_H(v)-2$
%Assume that $a \geq 2$. 
%Then for a general $E \in M_H(v)$,
%we have the following claims.
%\begin{enumerate}
%\item
%$H^0(X,E) \otimes {\cal O}_X \to E$ is surjective,
%\item
%$H^1(X,E)=H^2(X,E)=0$
%for a general $E \in M_H(v)$
%\end{enumerate}
\end{enumerate}
\end{thm}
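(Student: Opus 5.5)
The plan is to run the argument of Theorem \ref{thm:n} (case (1)) and of \cite[Thm. 4.2]{aCM:K3} together with Theorem \ref{thm:a<0} (case (2)) on an abelian surface. Everything is transported from the K3 setting except the role of spherical classes. Since $X$ is abelian, there are no $(-2)$-classes: $\Delta^+(X)=\emptyset$, so ${\Bbb H}^0={\Bbb H}$, and every $v_1\in D_v^\beta$ has $\epsilon=0$.

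First I translate the hypothesis. Since $v_0=r_0e^\beta$, we have $\langle v,v_0\rangle=-r_0a^\beta$. Hence $\langle v,v_0\rangle<0$ means $a^\beta>0$, and $\langle v,v_0\rangle\ge 0$ means $a^\beta\le 0$.

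Next I set up the path. I choose a path in $U_+^\beta$ from the Gieseker chamber to the chamber ${\cal C}={\cal C}_0^\beta$ in case (1). In case (2) the target is the chamber used in \cite{aCM:K3}, on the other side of the semicircle $C$, where $\Phi(E)[1]$ is Gieseker semistable. For abelian surfaces I use the analogues of Proposition \ref{prop:stability-cond}, Lemma \ref{lem:stability-cond} and Proposition \ref{prop:isom} (the same references \cite{FM-duality}, \cite{MYY:2011:2} cover abelian surfaces).

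The key input is an abelian version of Proposition \ref{prop:birational}. A wall $W_{v_1}$ crossed by the path is totally semistable (equivalently, ${\cal M}_H(v)\cap{\cal M}_{(sH,tH)}(v)=\emptyset$) only if some isotropic $v_1\in D_v^\beta$ has $\langle v,v_1\rangle$ below a small threshold. I will take this threshold to be $\langle v,v_1\rangle\le 2$; this is dictated by the dimension count $\dim{\cal M}(v_1)+\dim{\cal M}(v-v_1)$ versus $\dim{\cal M}(v)=\langle v^2\rangle+2$ for abelian surfaces. Moreover, the codimension of the locus destabilised at the wall is at least $\langle v,v_1\rangle-1$. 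So $\langle v,v_1\rangle\ge 3$ gives $\dim Z\le\dim{\cal M}_H(v)-2$.

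I then bound $\langle v,v_1\rangle$ case by case.

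In case (1) ($a^\beta>0$) I split by the sign of $r_1d^\beta-rd_1^\beta$. If it is negative, Lemma \ref{lem:r_1<0} (1) applies; its proof uses only $\langle v_1^2\rangle=0$ and $n\ge p^2$. It gives $\langle v,v_1\rangle\ge 2$, with strict inequality unless $n=p^2$ and $r=a^\beta=0$, which is excluded. If it is positive, the AM--GM estimate \eqref{eq:e=0} gives $\langle v,v_1\rangle\ge 2\sqrt{kn(r_1d^\beta-rd_1^\beta)}$ with $k\ge 1/p^2$. This is $\ge 2$ when $n\ge p^2$, and, by Lemma \ref{lem:r_1>0} (2), $\ge 3$ when $n>p^2$. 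Hence there are no totally semistable walls when $n\ge p^2$, which gives the birational statement. When $n>p^2$ all $\langle v,v_1\rangle\ge 3$, which gives the codimension-two bound on $Z$.

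In case (2) ($a^\beta\le 0$) I use Lemma \ref{lem:aCM:remark} (2) directly. Its proof treats both signs of $r_1d^\beta-rd_1^\beta$ and yields $\langle v,v_1\rangle>2$, hence $\ge 3$, for isotropic $v_1$. I only have this bound in hand under $n>p^2$. So for $n=p^2$ I must redo the $r_1<0$ branch of that proof: I will try the same AM--GM estimate with $-k\ge 1/p^2$, possibly combined with integrality of $p^2k$ and $pd_1^\beta$, to push $\langle v,v_1\rangle\ge 2$ up to $3$.

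The main obstacle is this abelian analogue of Proposition \ref{prop:birational}. I need to confirm the exact numerical threshold for isotropic walls in the absence of $(-2)$-classes, and the codimension estimate for the destabilised locus. I would imitate \cite[Prop. 3.6]{CNY}: at the wall, a general $E$ has a Harder--Narasimhan filtration with factors in ${\cal M}(v_1)$ and ${\cal M}(v-v_1)$, and one compares dimensions of the relevant extension stacks. This step, and the borderline case $n=p^2$ in case (2), are where the real work lies; the rest is the computation already carried out in Lemmas \ref{lem:r_1<0}, \ref{lem:r_1>0} and \ref{lem:aCM:remark}.
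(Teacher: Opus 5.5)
The overall route is the paper's own: transport the proofs of Theorem \ref{thm:n} and \cite[Thm. 4.2]{aCM:K3}, drop the $\epsilon=1$ cases, and bound $\langle v,v_1\rangle$ with the AM--GM estimates. Your treatment of (1), and of the birationality part of (2), is fine. One consistency fix: since codimension is $\ge\langle v,v_1\rangle-1$, the totally semistable walls are those with $\langle v,v_1\rangle\le 1$, not $\le 2$.

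\textbf{The gap.} The genuine gap is the codimension-two claim in (2) when $n=p^2$. You plan to redo only the branch $r_1d^\beta-rd_1^\beta<0$, sharpening AM--GM with integrality.

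\emph{That branch is harmless.} Equality $\langle v,v_1\rangle=2$ there forces $p=n=1$, $v=e^{qH}(0,H,0)$ and $v_1=e^{qH}(-1,H,-1)$. This $v_1$ has $d_1^\beta=d^\beta$ and $r_1<r$, so $v_1\notin D_v^\beta$.

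\emph{The other branch is where it fails.} Lemma \ref{lem:aCM:remark} (2) also needs $n>p^2$ there, and no lattice arithmetic can repair it. Take $n=p=1$, $v=e^{qH}(0,H,0)$ (so $a^\beta=0$ and $\langle v^2\rangle=2$) and $v_1=e^{qH}(1,H,1)$. Then $\langle v_1^2\rangle=0$, $d_1^\beta=d^\beta$, $r_1=1>r=0$ and $k=1$. So $v_1\in D_v^\beta$ and $\langle v,v_1\rangle=2$; integrality of $p^2k$ and $pd_1^\beta$ cannot raise this to $3$.

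\textbf{The missing input.} This is exactly what the paper uses: on an abelian surface, an actual wall has $d_1^\beta<d^\beta$.

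\begin{enumerate}
\item At a wall, the class $v-v_1$ is also the class of a semistable object, namely the quotient of the same phase.
\item On an abelian surface every semistable object has nonnegative square. This, rather than ``there are no $(-2)$-classes'', is the correct abelian input. Numerical $(-2)$-vectors such as $(2,H,1)$ for $n=1$ do exist; they just carry no semistable objects.
\item Suppose $d_1^\beta=d^\beta$. Then $r_1>r\ge 0$, so $a_1^\beta=n(d_1^\beta)^2/r_1>0\ge a^\beta$. Hence $v-v_1=e^\beta(r-r_1,0,a^\beta-a_1^\beta)$ has $\langle (v-v_1)^2\rangle=-2(r-r_1)(a^\beta-a_1^\beta)<0$, a contradiction.
\end{enumerate}

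Now consider the branch $r_1d^\beta-rd_1^\beta>0$. There the AM--GM equality case forces $k=1/p^2$, $r_1d^\beta-rd_1^\beta=1$ and $r_1=p^2d_1^\beta$. By integrality of $d_1$ this gives $p=1$ and $d_1^\beta=d^\beta=1$, which is now excluded. Hence $\langle v,v_1\rangle\ge 3$ in both branches even when $n=p^2$, and this yields $\dim Z\le\dim{\cal M}_H(v)-2$.
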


\begin{NB2}
In the proof of Theorem 4.2 in \cite{aCM:K3}, 
$d^\beta>d^\beta_1$ for an abelian surface.
Hence we see that $\langle v,v_1 \rangle >0$.
Therefore $\dim Z \leq \dim {\cal M}_H(v)-2$. 
\end{NB2}

If there is no isotropic Mukai vector $u$ with $\langle v,u \rangle=1$, then
there is no totally semistable wall. Hence $\Phi_{X \to X'}^{{\cal E}}(E)$ is stable for a general $E$.
Thus the claim is meaningful only if
there is isotropic Mukai vector $u$ with $\langle v,u \rangle=1$.
Moreover we can get a more precise condition
for the non-existence of totally semistable wall, with which
we get a similar result to Theorem \ref{thm:abel}, although we cannot estimate 
$\dim Z$.

\begin{prop}\label{prop:p>s,t}
If there is a totally semistable wall on $s=\frac{q}{p}$, then there is a pair of positive integers
$s,t$ such that $n=st$, $\gcd(s,t)=1$ and $p>s,t$.
\end{prop}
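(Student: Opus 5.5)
The plan is to reduce the existence of a totally semistable wall on $s=\frac{q}{p}$ to a strong arithmetic constraint on an isotropic Mukai vector, and then read off $s,t$ from the standard parametrization of isotropic vectors in rank one Néron–Severi.

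\emph{Step 1: the destabilizing class is isotropic.} On an abelian surface there are no $(-2)$-vectors. So, as in the proofs of Proposition \ref{prop:birational} and Proposition \ref{prop:no-wall}, a totally semistable wall on the line $s=\frac{q}{p}$ must come from some $v_1=(r_1,d_1H,a_1)\in D_v^\beta$ with $\langle v_1^2\rangle=0$ and $\langle v,v_1\rangle$ very small. On an abelian surface the relevant bound is $\langle v,v_1\rangle\le 1$: a general member of ${\cal M}_H(v)$ can only fail to deform off the wall when the moduli of $v_1$-objects times the extension data has too small dimension. I will use this bound as the defining property of a totally semistable wall in this setting.

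\emph{Step 2: the arithmetic inequality.} Write $v_1=e^\beta(r_1,d_1^\beta H,a_1^\beta)$ and set $k=a_1^\beta d^\beta-a^\beta d_1^\beta$. In either sign case ($r_1d^\beta-rd_1^\beta>0$, or $<0$ with $k<0$), the computation in the proof of Lemma \ref{lem:r_1>0} (2) and Lemma \ref{lem:aCM:remark} (2) gives
\[
\langle v,v_1\rangle\ \ge\ 2\sqrt{|k|\,n\,|r_1d^\beta-rd_1^\beta|}.
\]
By integrality, $p^2|k|\in{\Bbb Z}_{>0}$ and $r_1d^\beta-rd_1^\beta\in{\Bbb Z}\setminus\{0\}$. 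Combining these with $\langle v,v_1\rangle\le 1$ forces $n|k|<1$. I will extract a sharper statement than $n<p^2$ by returning to the exact identity $\langle v,v_1\rangle=(r_1d^\beta-rd_1^\beta)\,n\frac{d_1^\beta}{r_1}+k\frac{r_1}{d_1^\beta}$. Both terms are positive and their sum is at most $1$, so separately
\[
n\,d_1^\beta<r_1 \quad\text{and}\quad k\,r_1<d_1^\beta,
\]
up to the sign conventions of the second case.

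\emph{Step 3: parametrize and conclude.} The primitive isotropic vector $v_1$ can be written as $v_1=(p_1^2s,\,p_1q_1H,\,q_1^2t)$ with $st=n$ and $\gcd(s,t)=1$; the common multiple can be absorbed into the coprime factorization. Setting $m:=pq_1-p_1sq\in{\Bbb Z}\setminus\{0\}$ (nonzero because $d_1^\beta>0$), one gets
\[
d_1^\beta=\frac{p_1m}{p},\qquad a_1^\beta=\frac{tm^2}{p^2}.
\]
Substituting into the first inequality of Step 2 gives $st\,p_1m/p<p_1^2s$, i.e. $tm<pp_1$. For the second inequality I will write $k$ as an integer over $p^2$, expressed in $p_1,m,s,t$. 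Applying the symmetric argument, with the roles of $(r,r_1)$ and $(a^\beta,a_1^\beta)$ exchanged as in the duality $v\mapsto\Phi(v)^\vee$, which swaps $s$ and $t$, should yield the corresponding bound with $s$ in place of $t$. Both bounds then have to be sharpened to $p>t$ and $p>s$, using $|m|\ge 1$ and a comparison of $p_1$ with $p$ from the two sides.

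The main obstacle is exactly this sharpening: removing $p_1$ from the bound $tm<pp_1$ and its $s$-analogue. My first attempt will use that $e^\beta$ itself is the isotropic vector $(p^2,\ast,\ast)$ up to scaling. Because $v_1$ must lie strictly between $e^\beta$ and $v$ in the ordering by $d^\beta$-slope, with $d_1^\beta\le d^\beta$, the unimodularity relation $\langle v_1,r_0e^\beta\rangle=r_0tm^2/p^2\cdot(\ldots)$ should bound $p_1\le 1$ relative to $p$. Failing that, I will run the symmetric estimate on $X'$ and combine the two inequalities multiplicatively, which already gives $st<p^2$, and then argue the individual bounds from coprimality.
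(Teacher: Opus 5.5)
Taking your Step 1 as a black box is reasonable, since the paper also imports its structural input from \cite[Cor.~5.13]{YY2}. The real gap is in Step 3, where you flag it, and it cannot be closed with the information you keep.

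Steps 2--3 use only $|r_1d-rd_1|\ge 1$ and $K:=p^2k\in\mathbb{Z}_{>0}$, inserted into the two positive terms. After your parametrization this gives $tm<pp_1$ and $Kp_1s<pm$. Together these give only $Kn<p^2$; your AM--GM bound gives $4n\le p^2$. That is strictly weaker than the statement. For $p=7$, $n=11$ one has $4n\le p^2$, yet there is no factorization $n=st$ with $s,t<7$.

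Your proposed repairs also fail:
\begin{itemize}
\item $p_1$ is not bounded by $1$. Take $n=1$, $\beta=\frac{2}{5}H$ and $v=\ell(4,2H,1)-(9,3H,1)$ with $\ell\ge 3$. Then $r,d^\beta,a^\beta>0$, the vector $v_1=(4,2H,1)$ lies in $D_v^\beta$ with $\langle v,v_1\rangle=1$, and $p_1=2$.
\item ``$st<p^2$ plus coprimality'' cannot be split into $s<p$ and $t<p$; take $s=1$.
\item A single primitive isotropic vector need not have a factorization with $\gcd(s,t)=1$. For example, $(2,H,2)$ with $n=4$ only admits $s=t=2$. So coprimality has to come from elsewhere.
\end{itemize}

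The missing idea is to use the exact value of the pairing. Your Step 2 gives $\langle v,v_1\rangle>0$, so $\langle v,v_1\rangle=1$. Then $w:=v-\frac{\langle v^2\rangle}{2}v_1$ is isotropic with $\langle v_1,w\rangle=1$. So $v$ lies in the span of two isotropic vectors with pairing $\pm1$. These can be written as $(p_1^2s,p_1q_1H,q_1^2t)$ and $\pm(p_2^2t,p_2q_2H,q_2^2s)$ with $p_1q_2s-p_2q_1t=\pm1$, as recalled in subsection \ref{subsect:Hilb}, and this is what gives $\gcd(s,t)=1$.

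Since $Z_{(xH,yH)}(re^{\gamma H})=-rn\bigl((x-\gamma)+\sqrt{-1}\,y\bigr)^2$, the wall $W_{v_1}$ is the locus where $Z(v_1)$ and $Z(w)$ are real multiples of each other. That locus is the semicircle whose diameter joins the slopes $\frac{q_1}{p_1s}$ and $\frac{q_2}{p_2t}$. The condition $k/(r_1d^\beta-rd_1^\beta)>0$ in $D_v^\beta$ says exactly that this semicircle meets the line $x=\frac{q}{p}$. Hence $\frac{q}{p}$ lies strictly between the two slopes; after relabeling, $\frac{q_1}{p_1s}<\frac{q}{p}<\frac{q_2}{p_2t}$.

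Then $m_1:=p_1sq-pq_1$ and $m_2:=pq_2-p_2tq$ are positive integers, and $p=p_2tm_1+p_1sm_2$. This gives $p>t$ and $p>s$ at once. This is the paper's argument, with the two-isotropic decomposition and the straddling condition taken from \cite[Cor.~5.13]{YY2}. Your inequality route discards precisely this second isotropic vector, which is the information needed.
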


\begin{proof}
If there is a totally semistable wall on $s=\frac{q}{p}$, then
\cite[Cor. 5.13]{YY2} implies that
we have a decomposition
\begin{equation}\label{eq:v}
v=\delta(\ell_1 (p_1^2 s,p_1 q_1 H,q_1^2 t)-\ell_2 (p_2^2 t,p_2 q_2 H,q_2^2 s)),\;
p_1,p_2,q_1,q_2,s,t \in {\Bbb Z}
\end{equation}
such that $\delta=\pm 1$,
$\{\ell_1,\ell_2\}=\{1,\ell \}$,
$p_1 q_2 s-p_2 q_1 t=1$, $st=n$,
$p_1,p_2,s,t>0$ and
\begin{equation}\label{eq:pq}
\frac{q_1}{p_1 s}<\frac{q}{p}<\frac{q_2}{p_2 t}.
\end{equation}
Then
\begin{equation}\label{eq:p>s,t}
-p<p_1 s(p_2 qt-q_2 p)<
0<t p_2(p_1 q s-pq_1)<p.
\end{equation}
Hence $p>s,t$.
\end{proof}

\begin{cor}
\begin{enumerate}
\item
Assume that $n$ is a power of a prime.
If $n \geq p$, then there is no totally semistable wall on $s=\frac{q}{p}$.
\item
Assume that $p=2$.
Then there is no totally semistable wall on $s=\frac{q}{p}$ if $n \geq 2$.
\item
Assume that $p=3$.
Then there is no totally semistable wall on $s=\frac{q}{p}$
if $n \geq 3$.
\item
Assume that $p \geq 3$.
Then
there is no totally semistable wall on $s=\frac{q}{p}$
if $n>(p-1)(p-2)$.
\end{enumerate}
\end{cor}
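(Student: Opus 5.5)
The plan is to argue by contradiction in every case, using Proposition \ref{prop:p>s,t} as the only input. Suppose there is a totally semistable wall on $s=\frac{q}{p}$. Then Proposition \ref{prop:p>s,t} gives positive integers $s,t$ with
$$
n=st,\quad \gcd(s,t)=1,\quad s,t\le p-1 .
$$
Each of (1)--(4) then reduces to an elementary statement: under the stated hypothesis on $n$ and $p$, no coprime factorization $n=st$ with both factors at most $p-1$ exists. No further geometric input is needed.

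For (1), write $n=\ell^e$ with $\ell$ prime. Coprimality of $s$ and $t$ forces $\{s,t\}=\{1,n\}$. Then $p>\max(s,t)=n$, which contradicts $n\ge p$. The case $n=1$ is included: there $s=t=1$ and $p>1$, while $n\ge p$ forces $p=1$.

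For (2), $p=2$ forces $s=t=1$, hence $n=1$, contradicting $n\ge 2$.

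For (3), $p=3$ gives $s,t\in\{1,2\}$. Coprimality excludes $s=t=2$, so $n=st\in\{1,2\}$, contradicting $n\ge 3$.

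For (4), take $p\ge 3$ and $s,t\le p-1$. The factors cannot both equal $p-1$, since then $\gcd(s,t)=p-1\ge 2$. Hence one of them is at most $p-2$, and
$$
n=st\le (p-1)(p-2),
$$
contradicting $n>(p-1)(p-2)$. Note that (2) and (3) are just the small cases where this bound is sharpened by listing the possibilities directly.

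There is no real obstacle here; the only point needing care is that coprimality is what rules out the extremal choice $s=t=p-1$ (and in (1) what forces $\{s,t\}=\{1,n\}$). All the substantive work lies in Proposition \ref{prop:p>s,t}, namely the inequality \eqref{eq:p>s,t} coming from the decomposition \eqref{eq:v} of \cite[Cor. 5.13]{YY2}, which is taken as given.
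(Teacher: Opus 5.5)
Your argument is correct and is the same as the paper's: both use the preceding proposition to get a coprime factorization $n=st$ with $s,t<p$, and then rule it out by elementary case analysis on the factors. Your treatment of (4), where coprimality excludes $s=t=p-1$, is slightly cleaner than the paper's ``we may assume $s>t$'', which tacitly ignores the harmless case $s=t=1$.
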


\begin{proof}
Let $s,t$ be positive integers such that $n=st$,
$\gcd(s,t)=1$ and $s,t <p$. We may assume that $s>t$.
If $n$ is a power of a prime, then $p>s=n$.
If $p=2,3$, then
$s=t=1$ or $p=3$ and $(s,t)=(1,2)$.
Hence $n<p$.
Assume that $p \geq 3$. 
By $s \leq p-1$ and $t \leq p-2$, we get $n \leq (p-1)(p-2)$.
Therefore our claims follow from Proposition \ref{prop:p>s,t}.
\end{proof}

\begin{rem}
\begin{enumerate}
\item
If $n<p$, then there is a totally semistable wall on $s=\frac{q}{p}$.
We take $\beta=-\frac{1}{p}$.
Then
$v_1=(n,-H,1)$ defines a totally semistable wall for $v=(n\ell,-\ell H,\ell)$.
\item
If $n$ is not a power of prime, then
there is a totally semistable wall on $s=\frac{q}{p}$
even if $n \geq p$ in general.

Indeed for 
$n=6$, we set $v=\ell (2,H,3)-(3,2H,8)$.
Then $v_1=(2,H,3)$ defines a totally semistable wall on $s=\frac{3}{5}$.
\end{enumerate}
\end{rem}

\begin{rem}
For the totally semistable wall in the proof of Proposition \ref{prop:p>s,t},
$p_1,p_2<p$.
By $q_2=\frac{p_2 q_1 t+1}{p_1 s}$ and \eqref{eq:pq},
the choice of $q_1,q_2$ is finite.
Therefore there are finitely many choices of $v=(r,dH,a)$ with a totally semistable wall on $s=\frac{q}{p}$,
 if we fixed $r$ or $\langle v^2 \rangle$.
\end{rem}

\begin{ex}
Assume that $p=2$ and $n=1$.
If there is a totally semistable wall, then
$$
s=t=p_1=p_2=p_1 q_2 s-p_2 q_1 t=p_1 qs-pq_1=q_2 p-p_1 q_2 s=1.
$$
Hence $q=2q_1+1$ and
$q_2=q_1+1$.
Then
$$
v=(\ell-1,-H,-1)e^{q_1 H}, (\ell-1,\ell H,\ell)e^{q_1 H},\;(\ell>1)
$$
or $v=(0,H,1)e^{q_1 H}$.
\end{ex}

\begin{NB2}
$p=3$ and $n=1$.

If $(p_1,p_2)=(2,1)$, then
$$
v_1=(4,-2,1)e^{q_2 H},\; v_2=(1,0,0)e^{q_2 H}.
$$

If $(p_1,p_2)=(1,2)$, then
$$
v_1=(1,0,0)e^{q_1 H},\; v_2=(4,2H,1)e^{q_1 H}.
$$

If $(p_1,p_2)=(1,1)$, then
$$
v_1=(1,0,0)e^{q_1 H},\; v_2=(1,H,1)e^{q_1 H}.
$$

\end{NB2}

In \cite{Y:Stability}, we proved that semistability is preserved for all members of moduli space
if $d$ is larger than a constant depending on $r$, $d \mod r$ and
$\langle v^2 \rangle$. For a general member of moduli spaces,
we get a more simple condition.

\begin{NB2}
If $dn >\max \left\{\frac{1}{k^2}(kr'+(k-1)r)\ell,\ell \right\}$, then
there is no wall on $s=0$, where $k:=\gcd(r,d)>0$, $0 \leq r'<r$ and $rd'-r'd=-k$.
If $dn \leq \ell$, then there is a wall on $s=0$ in general.  
\end{NB2}

\begin{prop}
If $|d-\frac{q}{p}r| \geq p+\frac{r}{n}$, then 
there is no totally semistable wall on $s=\frac{q}{p}$.
\end{prop}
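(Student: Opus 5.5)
The plan is to argue by contradiction, using the same structure result as in the proof of Proposition \ref{prop:p>s,t}. Suppose there is a totally semistable wall on $s=\frac{q}{p}$. Then \cite[Cor. 5.13]{YY2} gives the decomposition \eqref{eq:v},
$$
v=\delta(\ell_1 u_0-\ell_2 u_1),\quad u_0=(p_1^2 s,p_1 q_1 H,q_1^2 t),\; u_1=(p_2^2 t,p_2 q_2 H,q_2^2 s),
$$
with $\{\ell_1,\ell_2\}=\{1,\ell\}$, $p_1 q_2 s-p_2 q_1 t=1$, $st=n$, and the inequalities \eqref{eq:pq} and \eqref{eq:p>s,t}. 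I will show that these force $|d^\beta|<p+\frac{r}{n}$, where $d^\beta=d-\frac{q}{p}r$, which contradicts the hypothesis.

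First I express $d^\beta$ through the two positive integers
$$
A:=tp_2(p_1qs-pq_1),\qquad B:=p_1s(pq_2-p_2qt),
$$
both of which lie in $(0,p)$ by \eqref{eq:p>s,t}. Substituting $r=\delta(\ell_1p_1^2s-\ell_2p_2^2t)$ and $d=\delta(\ell_1p_1q_1-\ell_2p_2q_2)$, a direct computation gives
$$
d^\beta=\delta\left(-\ell_1\,p_1^2 s\,c_A+\ell_2\,p_2^2 t\,c_B\right),\qquad c_A=\frac{A}{np_1p_2p},\; c_B=\frac{B}{np_1p_2p}.
$$
The key identity is $A+B=p$. It follows from $p_1q_2s-p_2q_1t=1$, since
$$
\frac{p_1qs-pq_1}{p_1s}+\frac{pq_2-p_2qt}{p_2t}=p\left(\frac{q_2}{p_2t}-\frac{q_1}{p_1s}\right)=\frac{p}{np_1p_2}.
$$
Consequently $c_A+c_B=\frac{1}{np_1p_2}\le\frac1n$.

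Next I estimate, splitting according to which of $\ell_1,\ell_2$ equals $1$. Put $x_0=p_1^2s$ and $x_1=p_2^2t$. If $\ell_1=1$, then $\ell_2x_1\le x_0+r$ because $r=|x_0-\ell_2x_1|$. Hence
$$
|d^\beta|\le x_0c_A+\ell_2x_1c_B\le x_0(c_A+c_B)+rc_B<\frac{p_1}{p_2t}+\frac rn .
$$
The remark after Proposition \ref{prop:p>s,t} gives $p_1<p$, so $\frac{p_1}{p_2t}<p$ and therefore $|d^\beta|<p+\frac rn$. The case $\ell_2=1$ is symmetric, with the roles of $(p_1,s,A)$ and $(p_2,t,B)$ exchanged, and gives $|d^\beta|<\frac{p_2}{p_1s}+\frac rn<p+\frac rn$. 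Either way this contradicts $|d-\frac qpr|\ge p+\frac rn$.

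The main obstacle is the sign bookkeeping. Both the sign $\delta$ and the sign of $x_0-\ell_2x_1$ must be handled so that $r\ge0$ really is $|x_0-\ell_2x_1|$. I also need the strictness of the final inequality to survive the crude step of bounding $c_B$ by $\frac1n$. If $p_1<p$ turns out to be unavailable, I would try instead to bound $x_0/(np_1p_2)$ using $A,B\ge1$ together with $A+B=p$, since that is where the factor $p$ in the hypothesis should come from.
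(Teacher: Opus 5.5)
Your argument is correct and is essentially the paper's proof. It uses the same decomposition of $v$, the same identity $\frac{q_2}{p_2t}-\frac{q_1}{p_1s}=\frac{1}{np_1p_2}$ (your $A+B=p$) and the same bound $p_1,p_2<p$. The only difference is that you split by which $\ell_i$ equals $1$, absorbing $\delta$ via $r=|\ell_1x_0-\ell_2x_1|$, where the paper splits by the sign of $\delta$.

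There is one harmless slip: the correct expansion is $d^\beta=-\delta\,(\ell_1p_1^2s\,c_A+\ell_2p_2^2t\,c_B)$, with both terms of the same sign. This is why the paper finds $d^\beta<0$ for $\delta=1$ and $d^\beta>0$ for $\delta=-1$. Since you only use $|d^\beta|\le x_0c_A+\ell_2x_1c_B$, which holds with either sign, your conclusion is unaffected.
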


\begin{proof}
Assume that there is a totally semistable wall on $s=\frac{q}{p}$.
It is sufficient to prove that
$$
\left|d-\frac{q}{p}r \right| < p+\frac{r}{n}.
$$
We take the decomposition \eqref{eq:v}:
$$
(r,dH,a)=\delta(\ell_1 (p_1^2 s,p_1 q_1 H,q_1^2 t)-\ell_2 (p_2^2 t,p_2 q_2 H,q_2^2 s)).
$$
We first assume that $\delta=1$. Then
\begin{equation}
r=\ell_1 p_1^2 s-\ell_2 p_2^2 t,\;
d=\ell_1 p_1 q_1-\ell_2 p_2 q_2.
\end{equation}

We see that
\begin{equation}
d-\frac{q}{p}r=\ell_1 p_1 q_1-\ell_2 p_2 \frac{p_2 q_1 t+1}{p_1 s}-\frac{q}{p}r
=\left(\frac{q_1}{p_1 s}-\frac{q}{p} \right)r-\frac{\ell_2 p_2}{p_1 s}
\end{equation}
and
\begin{equation}
d-\frac{q}{p}r=\ell_1 p_1 \frac{q_2 p_1 s-1}{p_2 t}-\ell_2 p_2 q_2-\frac{q}{p}r
=\left(\frac{q_2}{p_2 t}-\frac{q}{p} \right)r-\frac{\ell_1 p_1}{p_2 t}.
\end{equation}

By \eqref{eq:pq}, 
\begin{equation}
0<\frac{q}{p}-\frac{q_1}{p_1 s}<\frac{1}{n p_1 p_2}, \; 
0<\frac{q_2}{p_2 t}-\frac{q}{p}<\frac{1}{n p_1 p_2}.
\end{equation}
Therefore
\begin{equation}
0>d-\frac{q}{p}r>-\frac{r}{n}-p.
\end{equation}

We next assume that $\delta=-1$, that is,
\begin{equation}
r=-\ell_1 p_1^2 s+\ell_2 p_2^2 t,\;
d=-\ell_1 p_1 q_1+\ell_2 p_2 q_2.
\end{equation}
In this case, we see that
\begin{equation}
d-\frac{q}{p}r=\ell_2 p_2 q_2-\ell_1 p_1 q_1-\frac{q}{p}r=
\left( \frac{q_1}{p_1 s}-\frac{q}{p}\right)r+\frac{\ell_2 p_2}{p_1 s}<\frac{\ell_2 p_2}{p_1 s}
\end{equation}
and
\begin{equation}
0<d-\frac{q}{p}r=\ell_2 p_2 q_2-\ell_1 p_1 q_1-\frac{q}{p}r=
\left( \frac{q_2}{p_2 t}-\frac{q}{p}\right)r+\frac{\ell_1 p_1}{p_2 t}<\frac{r}{p_1 p_2 n}+\frac{\ell_1 p_1}{p_2 t}.
\end{equation}

Therefore 
\begin{equation}
0<d-\frac{q}{p}r<\frac{r}{n}+p.
\end{equation}
\end{proof}

If $\langle v^2 \rangle$ is sufficiently larger than $p$, then
we can also get the following result.

\begin{prop}
Let $v=(r,dH,a)$ be a Mukai vector such that $r>0$ and 
$\langle v^2 \rangle=2\ell$.
Assume that $\ell \geq p^2+r$. Then there is no totally semistable wall on $s=\frac{q}{p}$.
\end{prop}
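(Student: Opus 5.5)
The plan is to argue by contradiction using the decomposition \eqref{eq:v} from the proof of Proposition \ref{prop:p>s,t}. Assume there is a totally semistable wall on $s=\frac{q}{p}$. By \cite[Cor. 5.13]{YY2} we may write
$$
v=\delta(\ell_1 u_1-\ell_2 u_2),\quad u_1=(p_1^2 s,p_1 q_1 H,q_1^2 t),\; u_2=(p_2^2 t,p_2 q_2 H,q_2^2 s),
$$
with $\delta=\pm1$, $\{\ell_1,\ell_2\}=\{1,\ell'\}$, $p_1 q_2 s-p_2 q_1 t=1$, $st=n$, $p_1,p_2,s,t>0$, and \eqref{eq:pq}. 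The goal is to show that this forces $\ell<p^2+r$, contradicting the assumption $\ell\geq p^2+r$.

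\emph{Step 1: identify $\ell'$ with $\ell$.} Both $u_i$ are isotropic. A direct computation gives
$$
\langle u_1,u_2\rangle=2st\,p_1p_2q_1q_2-p_1^2q_2^2s^2-p_2^2q_1^2t^2=-(p_1q_2s-p_2q_1t)^2=-1 .
$$
Hence $\langle v^2\rangle=-2\ell_1\ell_2\langle u_1,u_2\rangle=2\ell'$, so $\ell'=\ell$.

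\emph{Step 2: bound the rank coefficients.} The chain of inequalities \eqref{eq:p>s,t} contains
$$
0<-p_1 s(p_2qt-q_2p)<p,\qquad 0<tp_2(p_1qs-pq_1)<p .
$$
In each of these, the factor $-(p_2qt-q_2p)$, respectively $p_1qs-pq_1$, is a positive integer. Therefore $p_1 s<p$ and $p_2 t<p$. It follows that
$$
p_1^2 s\leq (p_1s)^2<p^2,\qquad p_2^2 t\leq (p_2t)^2<p^2 .
$$

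\emph{Step 3: case analysis on $r=\delta(\ell_1p_1^2s-\ell_2p_2^2t)$.} Each of the four cases gives $\ell<p^2+r$.
\begin{enumerate}
\item[(a)] $\delta=1$, $\ell_1=\ell$: then $r=\ell p_1^2s-p_2^2t\geq \ell-p_2^2t>\ell-p^2$.
\item[(b)] $\delta=-1$, $\ell_2=\ell$: then $r=\ell p_2^2t-p_1^2s>\ell-p^2$.
\item[(c)] $\delta=1$, $\ell_1=1$: then $r=p_1^2s-\ell p_2^2t<p^2-\ell$, so $\ell<p^2-r<p^2+r$ because $r>0$.
\item[(d)] $\delta=-1$, $\ell_2=1$: symmetrically $r=p_2^2t-\ell p_1^2s<p^2-\ell$, and again $\ell<p^2+r$.
\end{enumerate}
In every case $\ell<p^2+r$, which contradicts the hypothesis $\ell\geq p^2+r$. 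Hence there is no totally semistable wall on $s=\frac{q}{p}$.

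The only delicate point is Step 2: one must check that the integer factors in \eqref{eq:p>s,t} are genuinely positive integers, so that they are at least $1$. This follows from the strict inequalities \eqref{eq:pq} after clearing denominators. Everything else is bookkeeping with the explicit decomposition.
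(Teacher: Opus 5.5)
Your proof is correct and follows the paper's argument. Assuming a totally semistable wall, you take the decomposition $v=\delta(\ell_1(p_1^2s,p_1q_1H,q_1^2t)-\ell_2(p_2^2t,p_2q_2H,q_2^2s))$ from the proof of the preceding proposition and rule out each case $r=\pm(\ell_1p_1^2s-\ell_2p_2^2t)$, $\{\ell_1,\ell_2\}=\{1,\ell\}$, using that the rank contributions are smaller than $p^2$. Your Steps 1 and 2 just make explicit what the paper leaves implicit, namely $\ell_1\ell_2=\ell$ and $p_1^2s,\,p_2^2t<p^2$; the paper's bound $\ell-p_2^2>r$ should read $\ell-p_2^2t>r$, which is exactly what your Step 2 justifies.
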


\begin{proof}
Assume that there is a totally semistable wall on $s=\frac{q}{p}$.
Then $r=\pm(\ell_1 p_1^2 s-\ell_2 p_2^2 t)$.

If $r=\ell_1 p_1^2 s-\ell_2 p_2^2 t$, then
$\ell p_1^2 s-p_2^2 t \geq \ell-p_2^2>r$ and
$p_1^2 s- \ell p_2^2 t < p^2-\ell \leq -r$, which contradicts 
our assumption $\ell_1 p_1^2 s-\ell_2 p_2^2 t = r$.

If $r=\ell_2 p_2^2 t-\ell_1 p_1^2 s$, then
$\ell p_2^2 t-p_1^2 s \geq \ell-p_1^2>r$ and
$p_2^2 t- \ell p_1^2 s < p^2-\ell \leq -r$,
which congtradicts our assumption
$r=\ell_2 p_2^2 t-\ell_1 p_1^2 s$.

Therefore our claim holds.
\end{proof}

\end{NB}

\end{document}